\renewcommand{\Re}{\mathop{\rm Re}}
\renewcommand{\tilde}{\widetilde}
\renewcommand{\bar}{\overline}
\newcommand{\absol}[1]{\left| #1 \right|} %absolute value
\newcommand{\rbra}[1]{\!\left( #1 \right)} %round brackets or parentheses
\newcommand{\sbra}[1]{\!\left[ #1 \right]} %square brackets or brackets
\newcommand{\bC}{\ensuremath{\mathbb{C}}}
\newcommand{\bD}{\ensuremath{\mathbb{D}}}
\newcommand{\bE}{\ensuremath{\mathbb{E}}}
\newcommand{\bN}{\ensuremath{\mathbb{N}}}
\newcommand{\bP}{\ensuremath{\mathbb{P}}}
\newcommand{\bR}{\ensuremath{\mathbb{R}}}
\newcommand{\cB}{\ensuremath{\mathcal{B}}}
\newcommand{\cF}{\ensuremath{\mathcal{F}}}
\newcommand{\cQ}{\ensuremath{\mathcal{Q}}}
\theoremstyle{plain}
\newtheorem{Thm}{Theorem}[section]
\newtheorem{Lem}[Thm]{Lemma}
\newtheorem{Prop}[Thm]{Proposition}
\newtheorem{Cor}[Thm]{Corollary}
\theoremstyle{definition}
\newtheorem{Def}[Thm]{Definition}
\newtheorem{Rem}[Thm]{Remark}
\numberwithin{equation}{section}
\renewcommand\section{\@startsection {section}{1}{\z@}%
                                   {-3.5ex \@plus -1ex \@minus -.2ex}%
                                   {2.3ex \@plus.2ex}%
                                   {\normalfont\large\bf}}
\renewcommand\subsection{\@startsection {subsection}{1}{\z@}%
                                   {-3.5ex \@plus -1ex \@minus -.2ex}%
                                   {2.3ex \@plus.2ex}%
                                   {\normalfont\normalsize\bf}}
\begin{document}
\begin{center}
	{\Large \bf
    Analytic property of scale functions for standard processes with no negative jumps and its application to quasi-stationary distributions
	}
\end{center}
\begin{center}
	Kei Noba and Kosuke Yamato
\end{center}
\begin{center}
	{\small \today}
\end{center}

\begin{abstract}
	For a scale function of standard processes with no negative jumps, we characterize it as a unique solution to a Volterra integral equation.
	This allows us to extend it to an entire function and to derive a useful identity that we call the resolvent identity.
	We apply this result to study the existence of a quasi-stationary distribution for the processes killed at hitting boundaries.
	A new classification of the boundary, which is a natural extension of Feller's for one-dimensional diffusions, is introduced and plays a central role to characterize the existence. 
\end{abstract}

\section{Introduction}

Scale functions have been widely used in the studies of one-dimensional diffusions and spectrally one-sided L\'evy processes (see, e.g., {\cite[Chapter VII]{RevuzYor} and \cite[Chapter 5]{Ito_essentials} for one-dimensional diffusions,} and \cite{MR3014147} and \cite[Section 8]{KyprianouText} for L\'evy processes).
%Scale functions are a family of functions that give simple formulas for the exit times and the potential density on finite intervals.
%[There is probably a problem with the above way of writing. For example, the function that appeared in the previous paper by Prof. Rivero and his students is defined a "scale function" although they did not show the potential density from their scale functions. I guess by p.100 in Levy matter we should change as follows:]
Scale functions are a family of functions that give simple formulas for the exit times on finite intervals. 
In many cases, the scale functions also give formulas for the potential density. 
These formulas %{\color{blue}{[Instead of "The formulas above", "These fomulas" may be better since the fomulas have not yet explicitly written.]}} 
allow many important functions and values related to the processes to be represented by the scale functions,
so that we can study them using the properties of the scale functions.

Since the approach by the scale functions has been very successful,
a number of studies have {extended them} to more general Markov processes with no negative jumps (see \cite{MR2641768, MR2548498, MR4025708, MR3944781}).
%In particular, the scale functions introduced in \cite{NobaGeneralizedScaleFunc} via an excursion measure are applicable to any standard processes with no negative jumps 
%[How about the following?]
In particular, \cite{NobaGeneralizedScaleFunc} defined the scale functions for very wide class of %{\color{blue} [I think "any" is not accurate, because we need several technical assumptions. How about "very wide class of"?]} 
standard processes with no negative jumps via an excursion measure and gave simple formulas for the potential densities killed on exiting finite intervals
%{[The sentence is overlapping with the one in the previous paragraph, so I think it is better to leave it as it is now.]}
%{[Looking at it again, I thought that "Scale functions are a family of functions that give simple formulas for the exit times and the potential density on finite intervals" in the previous paragraph should be changed. ]}
(see, e.g., \cite[Section 1.9]{BluGet1968} for standard processes).
The main objective of the present paper is to derive several useful properties of the scale functions given in \cite{NobaGeneralizedScaleFunc}.
As an application, we study the existence of quasi-stationary distributions (QSDs) of these processes killed at the boundaries.

\subsection{Analytic properties of the scale functions}

For a standard process $X$ with no negative jumps on an interval $I \subset \bR$, the $q$-scale function {introduced} in \cite{NobaGeneralizedScaleFunc} with $q \geq 0$ is given by a bivariate non-negative function $W^{(q)}(x,y)$ on $I^2$.
We will recall the definition and formulas in Section \ref{section:scaleFunc}. 

We investigate two new important properties of the scale functions. 
The first is analyticity of the scale functions.
Precisely, {the function} $W^{(q)}$ has the following series expansion with respect to $q$  %and can be regarded as an entire function {\color{blue}["can be regarded as an entire function" seems not necessary since it is mentioned in the next sentence.]} 
for fixed $x,y\in I $: 
\begin{align}
	W^{(q)}(x,y) = \sum_{n \geq 0}q^{n}W^{\otimes (n+1)}(x,y) \quad ({q \geq 0}, \ x,y \in I), \label{304}
\end{align}
where $W := W^{(0)}$ and $W^{\otimes n}$ denotes the $n$-th product of $W$ in a certain sense, {which is defined in \eqref{eq09}.}
%{[I think we should consider this equality only for $q \geq 0$ here since the LHS is extended to $q \in \bC$ based on this.]}
As a result, the function {$q\mapsto W^{(q)}(x,y)$} can be analytically extended to $\bC$.
The second is an identity {similar to the one satisfied by resolvents}:
\begin{align}
	W^{(q)}(x,y) - W^{(r)}(x,y) = (q-r) W^{(q)} \otimes W^{(r)}(x,y) \quad (q,r \in \bC, \ x,y \in I), \label{eq78}
\end{align}
where again $W^{(q)} \otimes W^{(r)}$ denotes a product of $W^{(q)}$ and $W^{(r)}$ in a certain sense. 
We call this the \textit{resolvent identity}.

{ %[How about rewriting the rest of this subsection as follows?]

If $X$ is a spectrally positive L\'evy process, these formulas are already known and can be easily shown.
In the theory of spectrally positive L\'evy processes, for $q \geq 0$, a unique function $\tilde{W}^{(q)}:\bR \to [0, \infty)$ with the following properties is called the $q$-scale function: it is zero on $(-\infty,0)$, and it is continuous on $[0,\infty)$ and satisfies the following equation:
\begin{align}
	\int_{0}^{\infty}\mathrm{e}^{-\beta t}\tilde{W}^{(q)}(x)dx = \frac{1}{\psi(\beta) - q} \quad \text{for large $\beta > 0$,} \label{eq77}
\end{align}
where $\psi$ denotes the Laplace exponent of $X$ (see, e.g., \cite[Theorem 8.1]{KyprianouText}).
We can take the reference measure of $X$ as the Lebesgue measure, and then this scale function $\tilde{W}^{(q)}$ is related to the bivariate scale function $W^{(q)}$ as $\tilde{W}^{(q)}(y-x)=W^{(q)}(x, y)$ (see, \cite[(1.10)]{NobaGeneralizedScaleFunc}).
In this case, the product $\otimes$ in \eqref{304} and \eqref{eq78} coincides with the usual convolution product of functions on $\bR$, and we can get \eqref{304} and \eqref{eq78} by simple computations because the Laplace transform of both sides can be computed explicitly using \eqref{eq77}.

Though these formulas are not difficult for spectrally positive L\'evy processes, it has played prominent roles. 
The identity \eqref{304} was observed in \cite[(9)]{BertoinQSD}, and 
he used it to show the positivity of the decay parameter and the unique existence of the QSD when the process is killed on exiting a finite interval.
The existence of QSDs in the half-line case was studied in \cite{QSD_SPL}, where the formulas were still the main tool.
Furthermore, it was applied to characterize the Laplace transform of the hitting distribution using the change of measure (see, e.g., \cite[Section 3.3]{MR3014147}). 
%On the resolvent identity, 
The resolvent identity \eqref{eq78} was obtained in \cite[(6)]{MR3148018}, and they applied it to obtain the occupation times of intervals until certain first passage times for spectrally positive L\'evy processes. 
The identity and its variants have been useful for expressing the expectations of various values such as risks, dividends and costs using the scale functions (see, e.g., \cite{MR4154775, MR4175396, MR4237145, MR4309476, MR4525666, MR4536183}).

In our generalized setting, however, there is no useful formula such as \eqref{eq77} that is essentially a consequence of the space-homogeneity.
We need a different approach.
Our proof is in a sense very elementary, which just combines the strong Markov property with the fundamental two formulas in the theory of scale functions, that is, those for the exit time and the potential density.
We mention \cite{QSD_downward_skip-free}, which defined scale functions for downward skip-free Markov chains on $\bN$ by an excursion measure following \cite{NobaGeneralizedScaleFunc}.
He showed that it is extended to an entire function and satisfy the identity corresponding to \eqref{304} and \eqref{eq78}.
These properties played a central role to study the existence of QSDs.
We can say that this paper establishes results similar to \cite{QSD_downward_skip-free} for standard processes with no negative jumps.
We emphasize, however, that in proofs, simple analogies often fail to work due to technical difficulties, and we require different discussions.
}

\subsection{Quasi-stationary distributions}

{%[How about rewriting this subsection as follows?]

A quasi-stationary distribution is a kind of equilibrium state for Markov processes with killing.
It is an initial distribution such that the distribution of the process conditioned to survive satisfies the stationarity.
Precisely, for the lifetime $\zeta$ of $X$ a distribution $\nu$ on $I$ is called a QSD when
\begin{align}
	\bP_{\nu}[X_{t} \in dx \mid \zeta > t] = \nu(dx) \quad \text{for every $t > 0$}. \label{}
\end{align}

The existence of QSDs has been studied by many authors, and there have been several sufficient conditions for the existence (see, e.g., \cite{KingmanRrecurrence,TuominenTweedie,RenewalDynamicalApproach,Takeda:QSD,ChampagnatVillemonais}).
On the other hand, there is a limited class of Markov processes for which a necessary and sufficient condition for the existence of QSDs has been obtained.
Such classes are birth-and-death processes (\cite{vanDoornQSD-BD}), one-dimensional diffusions (\cite[Chapter 6.3]{Quasi-stationary_distributions}, \cite{Littin}), spectrally positive L\'evy processes (\cite{BertoinQSD,QSD_SPL}) and branching processes (\cite{LambertQSD,MaillardQSD-branching}).

We characterize the existence for standard processes with no negative jumps when the killing occurs at boundaries.
Our process includes the various classes mentioned above, i.e., one-dimensional diffusions, spectrally positive L\'evy processes and continuous-state branching processes.
This means that our methods provide a unified approach to the results obtained in previous studies by the specific toolkits for each class.
We show that the existence of QSDs is completely characterized by the positivity of the decay parameter and the classification of the boundary, which we introduce via an integrability of the $0$-scale function.
It is worth noting that the new classification gives a natural extension of Feller's for one-dimensional diffusions.
We also prove that every QSD is represented by a $q$-scale function for negative $q$.
When the boundary is classified as entrance (see Definition \ref{def:boundaryClassification}), we investigate the \textit{Yaglom limit}, that is, the limit distribution of $\bP_{x}[X_{t} \in dx \mid \zeta > t]$ as $t \to \infty$.
Our approach is based on the scale function and is similar to that for spectral positive L\'evy processes and downward skip-free Markov chains.
In the proof, the analyticity of the scale function and the resolvent identity \eqref{eq78} play an essential role.
}

\subsection{Outline}

We present the outline of this paper.
In Section \ref{section:preliminary}, we will prepare some elements from the potential theory and define the scale functions. We also discuss their basic properties.
Section \ref{section:analyticExtension} is one of the main parts.
We will prove the analyticity and the resolvent identity of the scale functions.
In Section \ref{section:positivity}, we discuss the positivity of the scale functions.
This section is, to some extent, a preparation for the next section, but it is of interest in its own right and is expected to be useful in future applications.
Section \ref{section:QSD} is the other main part.
We will give a complete characterization of the existence of QSDs when the upper boundary is inaccessible.
The main theorems will be presented in the first part of the section.
In Appendix \ref{appendix:twoSideExit}, we will show the unique existence of a QSD when the upper boundary is {accessible}.
The proof is given by a slight modification of the results in Section \ref{section:QSD}.
In Appendix \ref{AppA01}, we will prove some propositions whose proofs are postponed.

\subsection*{Acknowledgements}

K. Noba stayed at Centro de Investigaci\'{o}n en Matem\'{a}ticas in Mexico as a JSPS Overseas Research Fellow and received support regarding the research environment there. 
K. Noba is grateful for their support during his visit.
K. Noba was supported by JSPS KAKENHI grant no. JP21K13807. 
K. Yamato was supported by JSPS KAKENHI grant no. JP23KJ0236.
Both authors were supported by JSPS Open Partnership Joint Research Projects grant no. JPJSBP120209921.

\section{Preliminary} \label{section:preliminary}

{In the following, we fix a standard Markov process $X=(\Omega, \cF, \cF_t, X_t, \theta_t, \bP_x)$ whose state space is an interval $I \subset \bR$ with cemetery point $\Delta$
(for the definition, see, e.g., \cite[Definition 1.9.2]{BluGet1968}).
We set $I_{\Delta} := \Delta \cup \{ \Delta \}$. 
We always assume that $X$ has no negative jumps:
\begin{align}
	\bP_{z}[ \tau_{x} < \tau_{y}] = 0 \quad (\text{$x, y, z \in I$ with $x < y < z$}), \label{}
\end{align}
where $\tau_{A} := \inf \{ t > 0 \mid X_{t} \in A \}$ denotes the first hitting time of the set $A$ and we especially write $\tau_{x} := \tau_{\{x\}} \ (x \in I)$.
Set $\ell_{1} := \inf I$ and $\ell_{2} := \sup I$.
For $x \in I$, we write 
\begin{align}
I_{\geq x}:= I\cap [x, \infty),& \quad 
I_{>x}:= I\cap (x, \infty), \\
I_{\leq x }:=I\cap(-\infty, x],& \quad
I_{<x }:=I\cap(-\infty, x). 
\end{align}
}

\subsection{Local times, potential densities and excursion measures} \label{section:localTimeExcursionMeasure}

In this section, we recall some properties of local times and excursion measures of standard processes.
\par
%Let %$(\{X_{t}\}_{t \geq 0},\{\bP_{x}\}_{x \in I})$ 
%{$X=(\Omega, \cF, \cF_t, X_t, \theta_t, \bP_x)$}
%be a standard Markov process {(for the definition, see, e.g., \cite[Definition 1.9.2]{BluGet1968})} {with state space} an interval $I {\subset \bR}$ {with cemetery point $\Delta$}. 
%{For simplicity, we write $I_\Delta=\Delta\cup\{\Delta\}$.} 
%{Here, } 
%{We assume that $X$ has no negative jumps}:
%\begin{align}
%	\bP_{z}[ \tau_{y} < \tau_{x}] = 1 \quad (\text{$x, y, z \in I$ with $x < y < z$}), \label{}
%\end{align}
%where $\tau_{A} := \inf \{ t > 0 \mid X_{t} \in A \}$ denotes \textit{the first hitting time of the set $A$} and we especially write $\tau_{x} := \tau_{\{x\}} \ (x \in I)$.
%{[I think irreducibility is included in (A1). $\bP_{x}[\tau_{I \setminus \{x\} } = 0] =  1$ follows from \cite[Remark 2]{NobaGeneralizedScaleFunc}. Precisely from the result of Theorem 1 (vi) in Salisbury(1986), only upward jumps can occur after the holding time. In other words, in this case, it may be only stagnant at the lower end of $I$. $\bP_x [\tau_0 =0]=1$ is necessary?]}
%{Set $\ell_{1} := \inf I$ and $\ell_{2} := \sup I$}.
For $q \geq 0$, let us denote the $q$-resolvent of $X$ by $R^{(q)}$, i.e.,
\begin{align}
	R^{(q)}f(x) := \bE_{x}\left[ \int_{0}^{\infty}\mathrm{e}^{-qt}f(X_{t})dt \right] =  \int_{0}^{\infty}\mathrm{e}^{-qt}\bE_{x}[f(X_{t})]dt \label{}
\end{align}
for every non-negative measurable function $f$.
{In the present paper,} we {always} suppose the following conditions:
\begin{enumerate}
	\item[(A1)] The map $(x,y) \longmapsto \bE_{x}[\mathrm{e}^{-\tau_{y}}]$ is $\cB(I) \otimes \cB(I)$-measurable.
	\item[(A2)] For every $x, y \in {I}$ with $x<y$, it holds $\bP_{y}[\tau_{x} < \infty] > 0$. 
	\item[(A3)] The process $X$ has \textit{a reference measure} $m$ on $I$,
	{that is, there exists a Radon measure $m$ on $I$ and it holds for any measurable set $A \subset I$ that
	\begin{align}
		R^{(1)}1_{A}(x) = 0 \ \text{for every $x \in I$ if and only if } m(A) = 0. \label{}
	\end{align}
	(For the detail, see, e.g., \cite[p.196]{BluGet1968}.)}
	%{Here, the definition of reference measure follows \cite[pp.196]{BluGet1968}.}
	%Precisely, there is a Radon measure $m$ on $I$ and for $q \geq 0$,
	%$R^{(q)}f(x)$ is absolutely continuous w.r.t.\ $m$ for every non-negative measurable function $f$.
\end{enumerate}

%{According to \cite{GemanHorowitz}{[I guess we should add which pages have the information. Can you write down what page it is on?]}, we classify the states in $I$.
%The state $x \in I$ is called \textit{regular} when $\bP_{x}[\tau_{x} = 0] = 1$, \textit{polar} when $\bE_{y}[\mathrm{e}^{-\tau_{x}}] = 0$ for every $y \in I$, and \textit{irregular} otherwise.
%}
%{[I confirmed \cite{GemanHorowitz}. In this paper, the authors seemed to admit the existence of points irregular and polar in p.28...?]}
According to $(11.1)$ in \cite[Chapter I]{BluGet1968} we classify the states in $I$.
The state $x \in I$ is called \textit{regular} for itself when $\bP_{x}[\tau_{x} = 0] = 1$ and \textit{irregular} for itself otherwise. Henceforth, we often omit ``itself" and write only ``regular" and ``irregular". 
According to $(3.1)$ in \cite[Chapter II]{BluGet1968} the stare $x\in I$ is called \textit{polar} when $\bP_{y}[\tau_{x} {<\infty}] = 0$ %{[please comfirm.]} 
for all $y\in I$. 
Note that (A2) ensures that every $x \in I \setminus \{\ell_{2}\}$ is regular or irregular while the upper end point $\ell_{2}$ may be polar when $\ell_{2} \in I$.

\begin{Prop}\label{Prop101}
	The measure $m$ satisfies $m((x, y))>0$ for $x, y \in I$ with $x< y$. 
\end{Prop}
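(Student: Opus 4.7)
The plan is to exhibit a point from which the $1$-resolvent of the indicator $\mathbf{1}_{(x,y)}$ is strictly positive, and then invoke the defining property of the reference measure from (A3).

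More concretely: since $I$ is an interval and $x,y\in I$ with $x<y$, the open interval $(x,y)$ is contained in $I$, so we may pick any $z\in(x,y)$. Because $X$ is standard, its sample paths are right-continuous $\bP_z$-a.s., and $X_0=z$ lies in the open set $(x,y)$. Therefore the exit time
\begin{align}
\sigma := \inf\{ t > 0 \mid X_t \notin (x,y) \}
\end{align}
satisfies $\sigma > 0$ $\bP_z$-a.s. First I would use this to estimate
\begin{align}
R^{(1)}\mathbf{1}_{(x,y)}(z)
= \bE_z\!\left[\int_0^\infty \mathrm{e}^{-t}\mathbf{1}_{(x,y)}(X_t)\,dt\right]
\geq \bE_z\!\left[\int_0^\sigma \mathrm{e}^{-t}\,dt\right]
= \bE_z\bigl[1-\mathrm{e}^{-\sigma}\bigr],
\end{align}
which is strictly positive since $\sigma > 0$ a.s.

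Finally, I would conclude by applying (A3): the existence of a point $z\in I$ with $R^{(1)}\mathbf{1}_{(x,y)}(z)>0$ rules out $m((x,y))=0$, giving $m((x,y))>0$.

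There is no real obstacle; the only mild points to check are that $(x,y)\subset I$ (immediate since $I$ is an interval) and that right-continuity of paths yields $\sigma>0$ a.s., both of which are built into the standing hypotheses on $X$. Thus the proof is essentially a one-line consequence of right-continuity combined with the definition of the reference measure.
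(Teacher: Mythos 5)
Your proof is correct, and it takes a genuinely different and more elementary route than the paper's. You start the process from an \emph{interior} point $z\in(x,y)$ and use only that sample paths are $\bP_z$-a.s.\ right-continuous at time $0$: since $X_0=z$ lies in the open set $(x,y)$, the first exit time $\sigma$ from $(x,y)$ is a.s.\ strictly positive, and this immediately forces $R^{(1)}\mathbf{1}_{(x,y)}(z)\geq\bE_z[1-\mathrm{e}^{-\sigma}]>0$, whence $m((x,y))>0$ by (A3). The paper instead starts from the \emph{endpoint} $y$: it invokes (A2) to get a positive-probability event $\{\tau_x^-<\infty\}$, locates the last visit $\tau^{\leq x}_y$ to $y$ before $\tau_x^-$, and then uses right-continuity together with the absence of negative jumps to find a short window after $\tau^{\leq x}_y$ during which the path stays in $(x,y)$. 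Your version buys simplicity and generality: it does not use (A2), does not use the no-negative-jumps hypothesis, and avoids manipulating the last-exit time $\tau^{\leq x}_y$, which is not a stopping time. The only minor stylistic point is that, to match (A3) precisely, you should phrase the conclusion as ``$R^{(1)}\mathbf{1}_{(x,y)}(z)>0$ shows $m((x,y))\neq 0$'', which is what (A3) gives directly; the paper's own proof works with $R^{(0)}$ and silently uses the same implication, so your choice of $R^{(1)}$ is actually the cleaner one.
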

The proof of Proposition \ref{Prop101} is written in Appendix \ref{AppA01}.

By \cite[Theorem 18.4]{GemanHorowitz}, there exists a family of processes $\{ L^{x} \}_{x \in I}$ with $L^{x} = \{ L^{x}_{t} \}_{t \geq 0}$ which we call the \textit{local times} of $X$ such that the following formulas hold:
\begin{align}
	\int_{0}^{t}f(X_{s})ds &= \int_{I}f(y)L^{y}_{t}m(dy) \quad \bP_x\text{-a.s}. \label{occupationTimeFormula} 
\end{align}
for every $t\geq 0$, non-negative measurable function $f$ and $x \in I$.
We remark that while in the occupation density formula given in \cite[Theorem 18.4]{GemanHorowitz} there is a term related to the occupation time on polar points, it can be neglected because in our situation the only possible polar point is a single point, $\ell_{2}$, and it has no effect. 
Here, the process $L^x$ is a continuous additive functional when $x$ is regular, and is represented as 
\begin{align}
L^{x}_{t} = c^{x} \sharp\{0 \leq s <t: X_s = x\} \quad (t\geq 0)
\end{align} 
for some constant $c^{x}> 0$ when $x$ is irregular. 
Note that under the definition of the reference measure of \cite[pp.196]{BluGet1968}, 
we can take the local times above to satisfy $\bP_{y} (L^{x}\equiv 0)<1$ for some $y \in I$.
%{[Why is this necessary? This does not seem true when $x$ is {irregular}. Indeed, when the lifetime $\zeta$ is given by an independent exponential time, it can happen that $\bP_{y}[\zeta < \tau_{x}] > 0$ for every $y$.]}
%{[If $L^{x}\equiv 0$, we cannot define an excursion measure at $x$. I don't understand the second half of the comment.]}
Using \eqref{occupationTimeFormula}, we get
\begin{align}
	\bE_x \sbra{\int_0^Te^{-qt} f(X_t) dt} = \int_{{I}}f(y)\bE_{x}\left[ \int_{(0, T]}\mathrm{e}^{-qt} dL^{y}_{t} \right] m(dy) \label{potentialDensityFormula}
\end{align}
for $q \geq 0$ and non-negative random variable $T \in \cF$.

%{
%	[Since the set $\{ t \geq 0 \mid X_{t} = x \}$ is discrete $\bP_{y}$-a.s. for every $y \in I$,
%	it follows from \eqref{occupationTimeFormula} that $L_{t}^{x} m(\{x\}) = 0$ $P_{y}$-a.s.
%	Is it better to set $m\{x\} = 0$ for an irregular $x$?
%	]
%}
%{[I guess we do not need to add this assumption to $m$.]}
%\begin{Rem} \label{rem:assumptionOnReferenceMeas}
%	{[Added.]}
%	Let $I_{r}$ be the set of regular points of $X$; $I_{r} := \{ y \in I \mid \bP_{y}[\tau_{y} = 0] = 1 \}$.
%	For $y \in I_{r}^{c}$, the set $\{ t > 0 \mid X_{s} = y \}$ is $\bP_{x}$-a.s.\ discrete for every $x \in I$ and thus $\bE_{x}[\int_{0}^{\infty}1_{\{y\}}(X_{s})ds] = 0$.
%	Let $I_{m}$ be the set of atoms of $m$; $I_{m} := \{ y \in I \mid m\{ y \} > 0 \}$.
%	Since the set $I_{m}$ is at most countable, when we define
%	\begin{align}
%		\tilde{m}(dx) := 1 \{ x \in I_{r} \cup I_{m}^{c} \} m(dx), \label{}
%	\end{align}
%	we easily see the condition (A3) still holds for $\tilde{m}$.
%	Thus, we may assume without loss of generality that $m\{y\} > 0$ implies $y$ is regular.
%	Henceforth, we always assume that holds.
%\end{Rem}

For $x \in I$ {which is regular,} %{[I think omitting "for itself" does not cause any confusion, and it seems desirable because "polar" is weird. When we need to consider the regularity for a set $A$, we can just write the definition explicitly (in fact, we already have done so).]}
let $\eta^{x}$ be the inverse local time of $x$, the right-continuous inverse of $t \mapsto L^{x}_{t}$, i.e., $\eta^{x}_{t} := \inf \{ s > 0 \mid L^{x}_{s} > t \}$.
Let $n_{x}%(de) \ (e \in \bD)
$ %{[I feel $n_{x}(de) \ (e \in \bD)$ is better]} 
be the excursion measure {on $\bD_x$} away from $x$ associated with $L^{x}_{t}$, where $\bD_{x}$ denotes the space of c\'adl\'ag paths from $[0,\infty)$ to $I$ which started from and stopped at $x$ (for background on general excursion theory, see \cite[Section IV]{BertoinLevy} or \cite{Itoexcursion}). %{[Do we need to take $\bD_{x}$, the space of c\'adl\'ag paths started from and stopped at $x$, instead of $\bD$?]}. 
%{[Either. I don't think it's necessary. ]}

Then from the excursion theory (see, e.g., \cite[p.121]{BertoinLevy}), it holds
\begin{align}
	-\log \bE_{x}[\mathrm{e}^{-q\eta^{x}_{1}}] =\delta_{x} q + n_{x}[1 - \mathrm{e}^{-q\tau_{x}}]\qquad (q\geq 0),  \label{107}
\end{align}
{for some $\delta_{x}\geq 0$}.
For $x \in I$ which is {irregular}, 
we define $n_{x}$ as {$1 / c_x$} times the law of $X$ started from $x$ and stopped at $x$. 
Then, \eqref{107} holds.
%{[When $x$ is irregular, we need to take $n_{x} := (1 / c^{x}) \bP_{x}[X_{\cdot \wedge \tau_{x}} \in de]$.]}
%{When $\ell_{2} \in I$ and it is polar, we define $n_{\ell_{2}}$ as the law of $X$ starting from $\ell_{2}$.}
%{[I guess we can define $n_{\ell_{2}}$ in the same way as other irregular cases.]}
 
\subsection{Scale functions and potential densities on intervals} \label{section:scaleFunc}

We recall the scale functions for standard processes with no negative jumps from \cite{NobaGeneralizedScaleFunc}. 
We also describe some simple properties of the scale functions that we will use in the later sections.

For $q \geq 0$, define the \textit{$q$-scale function} $W^{(q)}: I \times I \to [0,\infty)$ by
\begin{align}
	W^{(q)}(x,y) := \frac{1}{n_{y}[\mathrm{e}^{-q\tau_{x}^{-}}, \tau_{x}^{-} < \infty]} \quad %\text{if } 
	(x \leq y),
	\label{scaleFunc}
\end{align}
where we consider $1/ \infty = 0$ and $\tau^{-}_{x} =\tau_{I_{\leq x} } $,
and
\begin{align}
	W^{(q)}(x,y) := 0 \quad  \text{otherwise.} \label{scaleFunc-offDiagonal}
\end{align}
We especially write $W := W^{(0)}$.
\begin{Rem} 
Since $X$ has no negative jumps, it holds, for $x, y \in I$ with $x<y$, $\bP_y (\tau_x\neq\tau^-_x)=n_y (\tau_x\neq\tau^-_x)=0$. 
\end{Rem}

\begin{Rem} \label{rem:downwardRegularity}
	From \cite[Remark 3.2]{NobaGeneralizedScaleFunc}, every point $x \in {I_{> \ell_{1}}}$ is regular for $I_{<x}$, that is, $\bP_{x}[\tau_{I_{<x}} = 0] = 1$.
	We see from this that for $x,y \in I$ with $\ell_{1} < x \leq y$
	\begin{align}
		\tau_{x}^{-} = \tau_{I_{<x}} \quad n_{y}\text{-a.e.} \label{}
	\end{align}
	and $\tau_{I_{<x}} = 0$ $n_{x}$-a.e.
	{This implies $W^{(q)}(x,x) = W(x,x)$ for every $q > 0$.}
	%{[I'm not sure whether we should prove this explicitly.]}
	%{[I think we should write in detail if the referee asks.]}
	% We note that when $x$ is {irregular}, it holds that $\tau_{x} > \tau_{x}^{-}$ $n_{x}$-a.e.
	% For example, if $X$ is a spectrally positive compound Poisson process with negative drift on $\bR$,
	% we see for every $x \in \bR$ that $\bP_{x}[\tau_{x} = 0] = 0$ and $\bP_{x}[\tau_{x}^{-} = 0] = \bP_{x}[\tau_{(-\infty,x)} = 0] = 1$.
\end{Rem}
{The following propositions give the positivity and continuity of the scale functions. 
The proof of Proposition \ref{Prop202} and \ref{prop:continuityOfW} is written in Appendix \ref{AppA01}.
}
\begin{Prop}\label{Prop202}%[{{[Added.]}}]
	For $q \geq 0$ and $x,y \in I$ with $x < y$,
	\begin{align}
		W^{(q)}(x,y) \in (0,\infty). \label{}
	\end{align}
\end{Prop}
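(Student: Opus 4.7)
The plan is to verify that $n_y[\mathrm{e}^{-q\tau_x^-}, \tau_x^- < \infty] \in (0, \infty)$, which by definition \eqref{scaleFunc} is equivalent to the proposition. Since $0 < \mathrm{e}^{-q\tau_x^-} \le 1$ on $\{\tau_x^- < \infty\}$ when $q \ge 0$, the problem reduces to establishing $n_y[\tau_x^- < \infty] \in (0, \infty)$: the upper bound direction is immediate, and for the lower bound one uses $n_y[\mathrm{e}^{-q\tau_x^-}, \tau_x^- < \infty] \ge \mathrm{e}^{-qT} n_y[\tau_x^- \le T]$ for $T$ large enough that $n_y[\tau_x^- \le T] > 0$. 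I shall repeatedly use that the no-negative-jumps assumption forces $\tau_x = \tau_x^-$ under $\bP_y$ when $x < y$, so that (A2) reads $\bP_y[\tau_x^- < \infty] > 0$.

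For positivity I would split cases by the regularity of $y$. If $y$ is irregular, $n_y$ is $1/c_y$ times the law of $X$ stopped at the first return to $y$, so that $n_y[\tau_x^- < \infty] = (1/c_y)\bP_y[\tau_x^- < \tau_y]$. Assuming the right-hand side vanishes, iterating the strong Markov property at the successive return times $T_n$ to $y$ yields i.i.d.\ increments $T_n - T_{n-1} \sim \tau_y$, which is $\bP_y$-a.s.\ positive by irregularity, so $T_n \uparrow \infty$, and $T_n < \tau_x^-$ for every $n$ forces $\tau_x^- = \infty$ a.s., contradicting (A2). If $y$ is regular, an analogous conclusion follows from It\^o's Poisson point process of excursions: $n_y[\tau_x^- < \infty] = 0$ would force $\tau_{I_{\le x}} = \infty$ $\bP_y$-a.s., again contradicting (A2).

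For finiteness the irregular case is trivial because $n_y$ has total mass $1/c_y < \infty$. The regular case is the delicate point. I would use the excursion-theoretic identity $L^y_{\tau_{I_{\le x}}} = \sigma$, where $\sigma := \inf\{ s > 0 : e_s \in \cA \}$ with $\cA := \{\tau_x^- < \infty\}$ is the local-time coordinate of the first excursion in $\cA$; by the Poisson structure, $\sigma \sim \mathrm{Exp}(n_y(\cA))$. Thus proving $\bP_y[L^y_{\tau_{I_{\le x}}} > 0] > 0$ rules out $n_y(\cA) = \infty$. To secure this, combine right-continuity of $X$ at $t = 0$---which, together with $X_0 = y > x$, yields some $\delta > 0$ with $\bP_y[\tau_{I_{\le x}} > \delta] \ge 1/2$---with regularity of $y$, which gives $L^y_\delta > 0$ $\bP_y$-a.s. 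On the intersection, $L^y_{\tau_{I_{\le x}}} \ge L^y_\delta > 0$ with probability at least $1/2$.

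The main obstacle is the careful invocation of It\^o's excursion program---specifically the identification $L^y_{\tau_{I_{\le x}}} \sim \mathrm{Exp}(n_y(\cA))$ and the description of $\tau_{I_{\le x}}$ via the first $\cA$-excursion---in the full generality of standard processes with no negative jumps. Once that framework is in place, the remaining ingredients (strong Markov, right-continuity, and regularity of $y$) slot in straightforwardly.
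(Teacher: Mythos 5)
Your proof is correct, but it takes a genuinely different route from the paper's. The paper's argument is shorter and more elementary: for $q>0$ it applies the Markov property under $n_y$ at time $\tau_x^-$ to obtain
$n_{y}[1 - \mathrm{e}^{-q\zeta}] \geq n_{y}[\mathrm{e}^{-q\tau_{x}^{-}} - \mathrm{e}^{-q\zeta}, \tau_{x}^{-} < \zeta] = n_{y}[\mathrm{e}^{-q\tau_{x}^{-}}, \tau_{x}^{-} < \infty]\,\bE_{x}[1 - \mathrm{e}^{-q \tau_y}]$,
and then uses only the finiteness $n_y[1-\mathrm{e}^{-q\zeta}]<\infty$ from \eqref{107} together with positivity of the last product (via (A2) and \cite[Lemma 3.5]{NobaGeneralizedScaleFunc}) to conclude both bounds at once. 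Your proposal instead first reduces uniformly to the $q=0$ statement $n_y[\tau_x^-<\infty]\in(0,\infty)$ — a clean observation the paper does not make, and one that makes the finiteness claim strictly stronger than what is needed for $q>0$ — and then proves it by splitting on regularity of $y$ and, in the regular case, invoking the full It\^o Poisson-point-process representation (exponential distribution of $L^y_{\tau_{I_{\le x}}}=\sigma$ with rate $n_y(\cA)$, combined with right-continuity at $0$ to rule out a degenerate exponential). Both arguments are sound; the paper's buys you brevity and avoids the heavier excursion-theoretic scaffolding you flag as the main obstacle, while yours has the merit of treating all $q\ge 0$ in a single stroke via the $q=0$ reduction and of making the role of the Poisson structure explicit.
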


%{
%\begin{Prop}
%For $q\geq 0$ and $x \in I$ with $x > \ell_1$, we have 
%\begin{align}
%W^{(q)} (x, x) m(\{x\}) =0.  \label{401}
%\end{align}
%\end{Prop}
%}
%\begin{proof}
%{
%We fix $x\in I$ and $q\geq 0 $. 
%}
%\par
%{
%Suppose that the $R^{(0)}1_{\{x\}} ( y)$ is equal to $0$ for all $y \in I$. Then $m(\{x\})=0$ holds by the definition of $m$ and \eqref{401} is obvious.  
%}
%\par
%{
%Suppose that the $R^{(0)}1_{\{x\}} ( y)$ is positive for some $y \in I$. 
%Then, by the definition of the reference measure and \eqref{potentialDensityFormula}, it holds $m(\{x\})>0$. 
%If $x$ is regular, then the excursion measure $n_x$ is an infinite measure. Thus $n_x\sbra{e^{-q\tau^-_x}, \tau^-_x<\infty }=\infty$ and $W^{(q)} (x,x)=0$ hold. 
%If $x$ is irregular, then $R^{(0)}1_{\{x\}} ( y)>0$ implies that $x$ is a holding point. 
%This fact contradicts Remark \ref{rem:downwardRegularity}. %\cite[Remark 2]{NobaGeneralizedScaleFunc}.
%}
%\end{proof}

\begin{Rem} \label{rem:W(x,x)m(x)=0}
	%{[Added.]}
	From \cite[Remark 2.1]{NobaGeneralizedScaleFunc}, the point $y \in I \setminus \{ \ell_{1} \}$ cannot be a holding point: $\bP_{y}[\tau_{I \setminus \{y\}} > 0] = 0$.
	Thus, if $y \in I$ is regular, the excursion measure $n_{y}$ is an infinite measure and $W^{(q)}(x,x) = 0$ for every $q \geq 0$.
	On the other hand, if $y$ is irregular, then the potential measure $m$ does not have a mass at $y$, and it holds $m(\{y\})=0$ by the definition of $m$.
	%Combining this with Remark \ref{rem:assumptionOnReferenceMeas}, 
	Therefore, the following equality holds:
	\begin{align}
		W^{(q)}(x,x)m\{x\} = 0 \quad (q \geq 0,\
		 x \in I \setminus \{ \ell_{1}\}). \label{}
	\end{align}
\end{Rem}
\begin{Prop} \label{prop:continuityOfW}
	For every $q \geq 0$ and $y\in I$, the function $I_{<y} \ni x \longmapsto W^{(q)}(x,y)$ is continuous.
\end{Prop}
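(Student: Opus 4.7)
The plan is to work with $\varphi(x) := n_{y}[\mathrm{e}^{-q\tau_{x}^{-}}, \tau_{x}^{-} < \infty]$, which by Proposition \ref{Prop202} lies in $(0, \infty)$ for every $x \in I_{<y}$, so that continuity of $W^{(q)}(\cdot, y) = 1/\varphi$ is equivalent to that of $\varphi$. Since $x \mapsto \tau_{x}^{-}$ is non-increasing, $\varphi$ is non-decreasing, and it suffices to check one-sided continuity at each point of $I_{<y}$. The central tool is the multiplicative identity
\begin{align}
\varphi(x_{1}) = \varphi(x_{2}) \cdot \bE_{x_{2}}[\mathrm{e}^{-q\tau_{x_{1}}^{-}}, \tau_{x_{1}}^{-} < \infty] \quad (x_{1} \leq x_{2} \leq y),
\end{align}
obtained by applying the strong Markov property of $n_{y}$ at $\tau_{x_{2}}^{-}$, using the additivity $\tau_{x_{1}}^{-} = \tau_{x_{2}}^{-} + \tau_{x_{1}}^{-} \circ \theta_{\tau_{x_{2}}^{-}}$, and the fact that $X_{\tau_{x_{2}}^{-}} = x_{2}$ $n_{y}$-a.e.\ (a consequence of no negative jumps).

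For left-continuity along $x_{n} \uparrow x$, taking $(x_{1}, x_{2}) = (x_{n}, x)$ reduces the claim to $\bE_{x}[\mathrm{e}^{-q\tau_{x_{n}}^{-}}, \tau_{x_{n}}^{-} < \infty] \to 1$. By the regularity of $x$ for $I_{<x}$ (Remark \ref{rem:downwardRegularity}), the running infimum $Y_{\varepsilon} := \inf_{s \in [0, \varepsilon]} X_{s}$ satisfies $Y_{\varepsilon} < x$ $\bP_{x}$-a.s.\ for every $\varepsilon > 0$; hence $x_{n} > Y_{\varepsilon}$ for $n$ large, giving $\tau_{x_{n}}^{-} \leq \varepsilon$ $\bP_{x}$-a.s., so $\tau_{x_{n}}^{-} \to 0$ $\bP_{x}$-a.s., and dominated convergence yields the limit.

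For right-continuity along $x_{n} \downarrow x$, taking $(x_{1}, x_{2}) = (x, x_{n})$ reduces the claim to $\bE_{x_{n}}[\mathrm{e}^{-q\tau_{x}}, \tau_{x} < \infty] \to 1$ (using $\tau_{x}^{-} = \tau_{x}$ $\bP_{x_{n}}$-a.s.\ by no negative jumps). This is the main obstacle, since the argument for left-continuity has no symmetric counterpart: the process has no ``regularity of $x$ from above''. The workaround is to fix some $z \in I$ with $z > x$ and apply the same decomposition under $\bP_{z}$,
\begin{align}
\bE_{z}[\mathrm{e}^{-q\tau_{x}}, \tau_{x} < \infty] = \bE_{z}[\mathrm{e}^{-q\tau_{x_{n}}}, \tau_{x_{n}} < \infty] \cdot \bE_{x_{n}}[\mathrm{e}^{-q\tau_{x}}, \tau_{x} < \infty],
\end{align}
and then exploit the quasi-left-continuity of the standard process $X$: whenever $T := \sup_{n} \tau_{x_{n}} < \infty$, qlc forces $X_{T} = \lim_{n} X_{\tau_{x_{n}}} = x$ $\bP_{z}$-a.s., hence $T \geq \tau_{x}$, while $\tau_{x_{n}} \leq \tau_{x}$ is clear; thus $\tau_{x_{n}} \uparrow \tau_{x}$ $\bP_{z}$-a.s. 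Dominated convergence gives $\bE_{z}[\mathrm{e}^{-q\tau_{x_{n}}}, \tau_{x_{n}} < \infty] \to \bE_{z}[\mathrm{e}^{-q\tau_{x}}, \tau_{x} < \infty]$, which is strictly positive by assumption (A2), and dividing yields the desired limit.
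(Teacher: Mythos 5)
The multiplicative identity you take as your central tool is not quite right, and the slip matters for half of your argument. Under $n_y$ the path is stopped upon returning to $y$; therefore, after applying the strong Markov property at $\tau_{x_2}^-$ (with $X_{\tau_{x_2}^-}=x_2$), the law of the future is $\bP_{x_2}$ killed at the return to $y$, not the unrestricted law. The correct identity is
\begin{align}
\varphi(x_1)=\varphi(x_2)\,\bE_{x_2}\!\left[\mathrm{e}^{-q\tau_{x_1}^-},\ \tau_{x_1}^-<\tau_y^+\right],
\end{align}
which is exactly what Proposition~\ref{prop:exitProblem1} encodes; your version with the event $\{\tau_{x_1}^-<\infty\}$ is generally false (e.g.\ Brownian motion can hit $y$ from $x_2$ and still later hit $x_1$). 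For the left-continuity step this does no harm: the needed convergence $\bE_x[\mathrm{e}^{-q\tau_{x_n}^-},\tau_{x_n}^-<\tau_y^+]\to 1$ follows from the same regularity argument (Remark~\ref{rem:downwardRegularity}) that you use, since $\tau_{x_n}^-\downarrow 0$ and $\tau_y^+>0$ $\bP_x$-a.s. But for right-continuity you reduce to $\bE_{x_n}[\mathrm{e}^{-q\tau_x},\tau_x<\infty]\to 1$, whereas the correct identity requires the strictly stronger $\bE_{x_n}[\mathrm{e}^{-q\tau_x},\tau_x<\tau_y^+]\to 1$; proving the former does not give the latter, so as written the right-continuity half has a gap. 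The repair is straightforward: fix $z\in(x,y)$ and run your $\bP_z$-decomposition with the killing at $\tau_y^+$ carried along, i.e.\ $\bE_z[\mathrm{e}^{-q\tau_x},\tau_x<\tau_y^+]=\bE_z[\mathrm{e}^{-q\tau_{x_n}},\tau_{x_n}<\tau_y^+]\,\bE_{x_n}[\mathrm{e}^{-q\tau_x},\tau_x<\tau_y^+]$; your quasi-left-continuity / monotone convergence argument still identifies $\sup_n\tau_{x_n}=\tau_x$ on $\{\tau_x<\tau_y^+\}$, and the limiting denominator is positive, so division gives the desired limit.

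It is also worth noting that your right-continuity strategy is a genuine detour compared with the paper's. The paper applies quasi-left-continuity directly under $n_y$ to the increasing stopping times $\tau_{x+\varepsilon}^-$, shows $X_{\tau_{x+}}=x$ and hence $\tau_{x+}=\tau_x^-$ $n_y$-a.e.\ via Remark~\ref{rem:downwardRegularity}, and concludes by monotone convergence in the single integral $n_y[\mathrm{e}^{-q\tau_{x+\varepsilon}^-},\tau_{x+\varepsilon}^-<\infty]$. Your route passes through the multiplicative identity and an auxiliary reference law $\bP_z$, which makes you responsible for the extra bookkeeping of the killing at $\tau_y^+$ — the very place where the slip occurred. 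Both approaches rely on quasi-left-continuity and downward regularity in essentially the same way; the paper's is tighter because it never leaves the measure $n_y$.
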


\begin{Rem}
	The continuity of the function $y \mapsto W^{(q)}(x, y)$ depends on how we take the measure $m$ and the processes $\{ L^{x} \}_{x > 0}$, so we cannot be certain here. 
\end{Rem}

One of the importance of the scale functions is that they allow us to concretely represent the Laplace transform of the exit times and the potential density killed on exiting an interval.
\begin{Prop}[{\cite[Theorem 3.4]{NobaGeneralizedScaleFunc}}] \label{prop:exitProblem1}
	Let $q \geq 0$.
	For $x,y,z \in I$ with $x < y \leq z$,
	it holds 
	\begin{align}
		\bE_{y}[\mathrm{e}^{-q \tau^-_{x}}, \tau^-_{x} < \tau_{z}^{+}]
		= \frac{W^{(q)}(y,z)}{W^{(q)}(x,z)}, \label{eq56}
	\end{align}
	where $\tau_{z}^{+} := \tau_{I_{\geq z}}$.
\end{Prop}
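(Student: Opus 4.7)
The plan is to relate both sides of the identity to the excursion measure $n_z$ at the upper endpoint and then to invoke the strong Markov property under $n_z$ to identify them. I would focus on the case $y \in (x,z)$; the degenerate case $y = z$ can be handled separately (when $z$ is regular one has $\tau_z^+ = 0$ under $\bP_z$ and $W^{(q)}(z,z) = 0$ by Remark \ref{rem:W(x,x)m(x)=0}, so both sides vanish).

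First I would show that $\{\tau_x^- < \tau_z\}$ and $\{\tau_x^- < \tau_z^+\}$ coincide under $\bP_y$. One inclusion follows from $\tau_z^+ \leq \tau_z$. Conversely, if $\tau_z^+ < \tau_x^-$, then at time $\tau_z^+$ the process lies in $I_{\geq z}$, so by the absence of negative jumps it must descend continuously through $z$ before reaching $I_{\leq x}$, forcing $\tau_z \leq \tau_x^-$ and contradicting $\tau_x^- < \tau_z$ (and strict inequality at the end uses $I_{\leq x} \cap I_{\geq z} = \emptyset$). Hence it suffices to prove $\bE_y[\mathrm{e}^{-q\tau_x^-}; \tau_x^- < \tau_z] = W^{(q)}(y,z)/W^{(q)}(x,z)$.

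By the definition \eqref{scaleFunc} of the scale function, the right-hand side equals $n_z[\mathrm{e}^{-q\tau_x^-};\tau_x^- < \infty]/n_z[\mathrm{e}^{-q\tau_y^-};\tau_y^- < \infty]$. Since an excursion from $z$ has lifetime equal to its first return to $z$, under $n_z$ the conditions $\{\tau_x^- < \infty\}$ and $\{\tau_x^- < \tau_z\}$ coincide (and similarly for $\tau_y^-$). Since $x < y$, any excursion reaching $I_{\leq x}$ must first reach $I_{\leq y}$, and by Remark \ref{rem:downwardRegularity} together with the no-negative-jumps assumption, $X_{\tau_y^-} = y$ holds $n_z$-a.e. on $\{\tau_y^- < \tau_z\}$. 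Applying the strong Markov property under $n_z$ (see, e.g., \cite{BertoinLevy, Itoexcursion}) at the stopping time $\tau_y^-$ then yields
\begin{align}
n_z[\mathrm{e}^{-q\tau_x^-}; \tau_x^- < \tau_z] = n_z[\mathrm{e}^{-q\tau_y^-}; \tau_y^- < \tau_z] \cdot \bE_y[\mathrm{e}^{-q\tau_x^-}; \tau_x^- < \tau_z],
\end{align}
and dividing through recovers the desired identity.

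The main obstacle I expect is making the strong Markov property rigorous under the (potentially infinite) excursion measure $n_z$ in this generalized standard-process setting. Specifically, one must verify that $\tau_y^-$ is a genuine stopping time with respect to the natural excursion filtration, that the shift-of-measure identity passes through, and that the exact-landing $X_{\tau_y^-} = y$ holds $n_z$-almost everywhere. Once this excursion-theoretic foundation is established, the rest of the argument reduces to a short algebraic manipulation of the two displayed equations above.
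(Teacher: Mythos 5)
The paper does not prove this proposition itself; it is imported from \cite[Theorem~3.4]{NobaGeneralizedScaleFunc} (see the subsequent remark in the paper on the slightly relaxed hypotheses), so there is no in-paper proof to compare against. Your strategy --- first identifying $\{\tau_x^- < \tau_z^+\}$ with $\{\tau_x^- < \tau_z\}$ via absence of negative jumps, rewriting $W^{(q)}(y,z)/W^{(q)}(x,z)$ through the excursion formula \eqref{scaleFunc}, and then factoring $n_z[\mathrm{e}^{-q\tau_x^-};\tau_x^- < \tau_z]$ at the stopping time $\tau_y^-$ using the strong Markov property of $n_z$ together with the exact landing $X_{\tau_y^-} = y$ --- is precisely the argument dictated by the paper's excursion-theoretic definition of the scale function, and matches in substance the proof in the cited reference. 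The technical points you flag are standard and not genuine gaps: the Markov property of the excursion measure restricted to $\{\tau_y^- < \zeta\}$ (a finite-mass event, since $W^{(q)}(y,z) > 0$) is in the excursion-theory references the paper itself cites, and the exact-landing property $X_{\tau_y^-} = y$ (equivalently $\tau_y^- = \tau_y$ under $n_z$) is exactly the no-negative-jumps fact recorded in the remark following \eqref{scaleFunc-offDiagonal}. One small omission: in the $y = z$ case you only treat $z$ regular; for $z$ irregular both sides equal $\bE_z[\mathrm{e}^{-q\tau_x^-};\tau_x^- < \tau_z]$ by essentially the same computation, so the identity still holds, but this should be stated.
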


\begin{Thm}[{\cite[Theorem 3.6]{NobaGeneralizedScaleFunc}}] \label{thm:potentialDensity}
	Let $q \geq 0$ and $x,y,z,u \in I$ with $y, u \in (x, z)$.
	It holds
	\begin{align}
		\bE_{y}\left[ \int_{{(0,\tau^-_{x} \wedge \tau_{z}^{+}]}} \mathrm{e}^{-qt} dL^{u}_{t} \right]
		= &\frac{W^{(q)}(x,u)W^{(q)}(y,z)}{W^{(q)}(x,z)} - W^{(q)}(y,u) \label{} \\
		= &W^{(q)}(x,u) \bE_{y}[\mathrm{e}^{-q\tau^-_{x}}, \tau^-_{x} < \tau_{z}^{+}] - W^{(q)}(y,u). \label{potentialDensity}
	\end{align}
\end{Thm}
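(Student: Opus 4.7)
The second equality in \eqref{potentialDensity} follows at once from Proposition \ref{prop:exitProblem1}, so I focus on the first. Write $T := \tau^-_x \wedge \tau^+_z$ and $g(y) := \bE_y[\int_{(0,T]} e^{-qt}\,dL^u_t]$. My plan is to compute $g(u)$ by excursion theory under $n_u$ and then propagate to general $y \in (x,z)$ using the strong Markov property.

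For $y > u$: since $X$ has no negative jumps, $\tau_u = \tau^-_u$ with $X_{\tau_u} = u$, and $\{\tau_u < T\} = \{\tau_u < \tau^+_z\}$ (on $\{\tau^-_x < \infty\}$ the process necessarily crosses $u$ before exiting at $x$). Strong Markov at $\tau_u$, accounting for a jump of $L^u$ of size $W^{(q)}(u,u)$ at $\tau_u$ (which equals $c_u$ when $u$ is irregular and vanishes when $u$ is regular), combined with \eqref{eq56} for the triple $(u,y,z)$, gives
\begin{align}
g(y) = \frac{W^{(q)}(y,z)}{W^{(q)}(u,z)}\bigl(g(u) + W^{(q)}(u,u)\bigr).
\end{align}
For $y < u$: apply strong Markov at $\tau_{I_{\geq u}}$ and write $g(y)$ as an expectation of $g(X_{\tau_{I_{\geq u}}})$ over the overshoot distribution (with $X_{\tau_{I_{\geq u}}} \in [u,z)$ handled by the previous display, and $X_{\tau_{I_{\geq u}}} \geq z$ contributing zero). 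The residual overshoot identity
\begin{align}
\bE_y\bigl[e^{-q\tau_{I_{\geq u}}} W^{(q)}(X_{\tau_{I_{\geq u}}}, z);\,\tau_{I_{\geq u}} < \tau^-_x,\,X_{\tau_{I_{\geq u}}} < z\bigr] = W^{(q)}(y,z) - \frac{W^{(q)}(x,z)\,W^{(q)}(y,u)}{W^{(q)}(x,u)}
\end{align}
is obtained by applying \eqref{eq56} first to $(x,y,z)$ and then, after strong Markov at $\tau_{I_{\geq u}}$, to $(x,y,u)$. Substitution reduces both cases to the single identity $g(u) + W^{(q)}(u,u) = W^{(q)}(x,u) W^{(q)}(u,z)/W^{(q)}(x,z)$.

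The decisive step is thus the computation of $g(u)$. Under $\bP_u$, by the excursion theory around $u$, $L^u_T$ is the first arrival in the Poisson point process of ``exit'' excursions, i.e.\ those with $T^e := \tau^-_x \wedge \tau^+_z < \infty$, which occurs at rate $n_u[T^e < \infty]$. Changing variables $s = L^u_t$ in the integral defining $g(u)$ and using \eqref{107} restricted to non-exit excursions to describe the Laplace exponent of $\eta^u$ yields $g(u) = 1/\Psi(q)$ with
\begin{align}
\Psi(q) = q\delta_u + n_u[(1 - e^{-q\tau_u})\,1_{T^e = \infty}] + n_u[T^e < \infty].
\end{align}
The main obstacle is identifying $\Psi(q)$ with the scale-function quantity $W^{(q)}(x,z)\big/\bigl(W^{(q)}(x,u) W^{(q)}(u,z) - W^{(q)}(u,u) W^{(q)}(x,z)\bigr)$; this requires relating the $n_u$-Laplace transforms of $\tau^-_x$ and $\tau^+_z$ to the scale functions via the definition \eqref{scaleFunc}, and performing a further strong Markov decomposition under $n_u$ at $\tau^+_z$ to handle the overshoot contributions.
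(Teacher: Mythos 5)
This paper does not actually prove the statement: it is imported verbatim from \cite[Theorem~3.6]{NobaGeneralizedScaleFunc}, so there is no proof in the paper to match against. The remark preceding Theorem~\ref{Thm201} does reveal that the cited proof goes through a ``Lemma~3.5'' (computing the local-time expectation at the reference point itself) followed by a strong-Markov reduction, which is precisely the architecture you adopt: compute $g(u)$ first, then propagate to $y\gtrless u$. So your overall plan is sound and almost certainly matches the one used there, and your reduction to the single identity $g(u)+W^{(q)}(u,u)=W^{(q)}(x,u)W^{(q)}(u,z)/W^{(q)}(x,z)$ is correct — I checked that both the $y>u$ strong-Markov step at $\tau_u$ and the $y<u$ overshoot identity obtained from \eqref{eq56} applied at $(x,y,z)$, $(x,y,u)$, and after $\tau_{I_{\geq u}}$ do collapse to the target once this identity is granted.

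The gap is that the decisive step is not actually carried out. You write down $g(u)=1/\Psi(q)$ with $\Psi(q)=q\delta_u+n_u[(1-\mathrm{e}^{-q\tau_u})\mathbf 1_{T^e=\infty}]+n_u[T^e<\infty]$ and then state, correctly, that one must identify $\Psi(q)$ with $W^{(q)}(x,z)/\bigl(W^{(q)}(x,u)W^{(q)}(u,z)-W^{(q)}(u,u)W^{(q)}(x,z)\bigr)$, calling this ``the main obstacle'' and describing what tools would be needed — but you never do it. This is not peripheral: it is the whole content of the theorem. The difficulty is real, because $W^{(q)}(x,u)$ is defined through $n_u$ but $W^{(q)}(u,z)$ is defined through $n_z$, so the identification genuinely requires a further decomposition of the exit excursions under $n_u$ at $\tau^+_z$ and a translation of the resulting $n_z$-quantities back into $n_u$-quantities (or vice versa), none of which is attempted. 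Until that computation is supplied, the argument is a reduction, not a proof. There is also a minor bookkeeping issue you brush over: since $X_0\ne u$ under $\bP_y$, the jump of $L^u$ at $\tau_u$ is picked up by $\int_{(0,T]}$, while under $\bP_u$ the time-$0$ atom is excluded — your use of the correction term $W^{(q)}(u,u)$ handles this, but you would want to state the convention explicitly (the paper itself writes $\int_{0-}$ elsewhere when the atom at $0$ is intended), and verify it also covers the event $\{X_{\tau_{I_{\geq u}}}=u\}$ in the $y<u$ case, which your phrasing ``an expectation of $g(X_{\tau_{I_{\geq u}}})$'' glosses over.
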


By regarding "$b$" in the proofs of {\cite[Lemma 3.5 and Theorem 3.6]{NobaGeneralizedScaleFunc}} as "$-\infty$" and "$T_b^{-}$" as "$\infty$", we can prove the following theorem. Since the proof is consequently almost the same as that of Theorem \ref{thm:potentialDensity}, we omit the proof. 
\begin{Thm}\label{Thm201}
	Let $q \geq 0$ and $x,y,z \in I$ with $y , z\in I_{>x}$.
	It holds
	\begin{align}
		\bE_{y}\left[ \int_{(0, \tau^-_{x}] } \mathrm{e}^{-qt} dL^{z}_{t} \right]
		= W^{(q)}(x,z) \bE_{y}[\mathrm{e}^{-q\tau^-_{x}},\tau^-_x<\infty] - W^{(q)}(y,z). \label{a001}
	\end{align}
\end{Thm}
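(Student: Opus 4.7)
The plan is to mirror the proof of Theorem \ref{thm:potentialDensity} from \cite[Lemma 3.5, Theorem 3.6]{NobaGeneralizedScaleFunc}, with the upper barrier ``$z$'' there sent to $\ell_2$ (so that ``$\tau_z^+$'' becomes ``$\infty$'') and the lower auxiliary parameter ``$b$'' taken to be $-\infty$. The argument proceeds in two parts: first the case $y = z$ via excursion theory at $z$, then the extension to general $y \in I_{>x}$ via the strong Markov property at $\tau_z$.

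For the case $y = z$, I would establish
\[
\bE_z\!\left[\int_{(0,\tau^-_x]} \mathrm{e}^{-qt}\, dL^z_t\right] = W^{(q)}(x,z)\, \bE_z[\mathrm{e}^{-q\tau^-_x}, \tau^-_x < \infty] - W^{(q)}(z,z)
\]
via the excursion theory of $X$ at $z$. Under $\bP_z$ the excursions of $X$ away from $z$ form a Poisson point process with intensity $n_z$. Expressing the integral through the inverse local time $\eta^z$, applying the compensation formula, and invoking the definition $W^{(q)}(x,z) = 1/n_z[\mathrm{e}^{-q\tau^-_x},\tau^-_x<\infty]$ yield the identity. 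When $z$ is regular, $W^{(q)}(z,z) = 0$ by Remark \ref{rem:W(x,x)m(x)=0} and the correction term disappears; when $z$ is irregular, the correction encodes the convention-dependent jump of $L^z$ at the first visit to $z$.

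For general $y \in I_{>x}$ with $y \ne z$, the local time $L^z$ vanishes on $[0,\tau_z)$, and its first jump (whose contribution matches $W^{(q)}(z,z)$ in the irregular case) occurs at $\tau_z$. A careful application of the strong Markov property at $\tau_z$, combined with the $y = z$ formula, gives
\[
\bE_y\!\left[\int_{(0,\tau^-_x]} \mathrm{e}^{-qt}\, dL^z_t\right] = W^{(q)}(x,z)\, \bE_y[\mathrm{e}^{-q\tau^-_x},\tau^-_x < \infty,\, \tau_z < \tau^-_x].
\]
Writing $\{\tau^-_x<\infty\} = \{\tau_z < \tau^-_x<\infty\}\sqcup\{\tau^-_x<\tau_z\}$ (noting $\{\tau^-_x = \tau_z\}$ is empty since $x < z$), matching the right-hand side of \eqref{a001} reduces to the claim $W^{(q)}(x,z)\,\bE_y[\mathrm{e}^{-q\tau^-_x},\tau^-_x<\tau_z] = W^{(q)}(y,z)$. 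For $y > z$ both sides vanish, the left by the no-negative-jumps property (which forces $\tau_z \le \tau^-_x$ whenever $\tau^-_x < \infty$), the right by the off-diagonal convention. For $y \le z$, the identity follows from Proposition \ref{prop:exitProblem1} once one observes that $\{\tau^-_x < \tau^+_z\} = \{\tau^-_x < \tau_z\}$ on $\{\tau^-_x < \infty\}$: by the no-negative-jumps property, once the process enters $[z,\infty)$, it must descend through $z$ exactly before reaching $I_{\le x}$.

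The main obstacle I expect is the excursion-theoretic computation in the first part. Without an upper barrier, one must control excursions that never return to $z$---those killed in $I_{\le x}$ and those escaping toward $\ell_2$---and verify that the relevant Laplace exponent in the compensation formula remains finite and simplifies to the claimed scale-function form. This is incidentally also why a direct limit $\tau^+_z \uparrow \ell_2$ in Theorem \ref{thm:potentialDensity} does not immediately yield Theorem \ref{Thm201} when $\ell_2 \in I$ is accessible: one genuinely needs the excursion-theoretic argument with no upper bound.
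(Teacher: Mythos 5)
Your proposal is correct and follows the paper's intended route: the paper itself simply defers to modifying the proofs of \cite[Lemma 3.5 and Theorem 3.6]{NobaGeneralizedScaleFunc}, and the two-step argument you describe (excursion theory at $z$ for the base case $y=z$, then the strong Markov property at $\tau_z$ together with the exit formula of Proposition \ref{prop:exitProblem1}) is precisely that modification, and your intermediate checks --- the a.s.\ identity $\{\tau^-_x < \tau^+_z\}=\{\tau^-_x < \tau_z\}$, the cancellation of the $W^{(q)}(z,z)$ contribution by the jump of $L^z$ at $\tau_z$ in the irregular case, and the vanishing of both sides for $y>z$ --- are all sound. Your closing observation that a bare limit $\tau^+_z\uparrow\ell_2$ in Theorem \ref{thm:potentialDensity} does not suffice when $\ell_2\in I$ is accessible is a correct and useful clarification that the paper leaves implicit.
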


Note that by \eqref{potentialDensityFormula}, the function \eqref{potentialDensity} and \eqref{a001} are the potential densities of $X$ started from $y$ killed on exiting the interval $(x,z)$ and $I_{>x}$, respectively.

We also have the representation of the exit time from the upper end of an interval.

\begin{Prop}[{\cite[Corollary 3.7]{NobaGeneralizedScaleFunc}}] \label{prop:exitProblemZ}
	%{[Added.]}
	For $q \geq 0$ and $x,y \in I$, define
	\begin{align}
		Z^{(q)}(x,y) := 
		%\left\{
		%\begin{aligned}
			&1 + q\int_{(x,y)}W^{(q)}(x,u)m(du) .%& (x < y), \\
			%&1 & (x \geq y).
		%\end{aligned}
		%\right.
		\label{scaleFuncZ}
	\end{align}
	It holds for $x,y,z \in I$ with $y \in (x,z)$
	\begin{align}
		\bE_{y}[\mathrm{e}^{-q\tau_{z}^{+}}, \tau_{z}^{+} < \tau^-_{x}] = Z^{(q)}(y,z) - \frac{W^{(q)}(y,z)}{W^{(q)}(x,z)} Z^{(q)}(x,z). \label{eq54}
	\end{align}
\end{Prop}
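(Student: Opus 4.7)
The plan is to derive the formula by decomposing the expectation
\[
\bE_y\!\left[1 - \mathrm{e}^{-q(\tau_x^- \wedge \tau_z^+)}\right] = q\,\bE_y\!\left[\int_0^{\tau_x^- \wedge \tau_z^+} \mathrm{e}^{-qs}\,ds\right]
\]
in two different ways: once by splitting according to which side the process exits on (which brings in the target quantity), and once by reducing to the known potential density via the occupation time formula (which brings in the $Z^{(q)}$ term).

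More concretely, first I would take $q>0$ and decompose the left-hand side above. Since $\bP_y$-a.s.\ $\tau_x^- \neq \tau_z^+$ (no negative jumps guarantees that the exits do not coincide) and $\mathrm{e}^{-q\infty}=0$, one gets
\[
1 = \bE_y[\mathrm{e}^{-q\tau_x^-},\,\tau_x^-<\tau_z^+] + \bE_y[\mathrm{e}^{-q\tau_z^+},\,\tau_z^+<\tau_x^-] + q\,\bE_y\!\left[\int_0^{\tau_x^- \wedge \tau_z^+}\mathrm{e}^{-qs}\,ds\right].
\]
By Proposition \ref{prop:exitProblem1}, the first term equals $W^{(q)}(y,z)/W^{(q)}(x,z)$, so solving for the target expectation it remains to evaluate the integral term.

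For that, I would use \eqref{potentialDensityFormula} to rewrite
\[
\bE_y\!\left[\int_0^{\tau_x^- \wedge \tau_z^+}\mathrm{e}^{-qs}\,ds\right] = \int_{I} \bE_y\!\left[\int_{(0,\tau_x^- \wedge \tau_z^+]}\mathrm{e}^{-qs}\,dL^u_s\right] m(du),
\]
and then apply Theorem \ref{thm:potentialDensity} to identify the inner expectation with $\frac{W^{(q)}(x,u)W^{(q)}(y,z)}{W^{(q)}(x,z)}-W^{(q)}(y,u)$ for $u\in(x,z)$, while for $u\notin (x,z)$ the inner expectation vanishes by the definitions \eqref{scaleFunc}--\eqref{scaleFunc-offDiagonal} (after noting via Remark \ref{rem:W(x,x)m(x)=0} that the boundary atoms $\{x\}, \{y\}, \{z\}$ contribute nothing). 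Multiplying by $q$ and using the definition \eqref{scaleFuncZ} of $Z^{(q)}$ converts the two resulting integrals into $Z^{(q)}(x,z)-1$ and $Z^{(q)}(y,z)-1$ respectively, where again $W^{(q)}(y,u)=0$ for $u\le y$ lets us write $q\int_{(x,z)}W^{(q)}(y,u)m(du) = Z^{(q)}(y,z)-1$.

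Putting everything together and cancelling the $W^{(q)}(y,z)/W^{(q)}(x,z)$ terms yields the claimed identity for $q>0$. For $q=0$ the statement reduces to $\bP_y[\tau_z^+<\tau_x^-] = 1 - W(y,z)/W(x,z)$, which follows by passing $q \downarrow 0$ using monotone convergence on the left-hand side and continuity of $Z^{(q)}$ in $q$ (together with $Z^{(0)}\equiv 1$) on the right-hand side. I expect the only delicate step to be the bookkeeping of boundary contributions in the $m$-integral: one must verify that the atoms at $x$, $y$, $z$ are harmless, which is where Remark \ref{rem:W(x,x)m(x)=0} and the convention $W^{(q)}(y,u)=0$ for $u<y$ enter crucially.
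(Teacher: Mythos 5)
Your proof is correct and is the natural way to derive \eqref{eq54}: decompose
\begin{align}
1 = \bE_{y}\bigl[\mathrm{e}^{-q\tau_{x}^{-}},\,\tau_{x}^{-}<\tau_{z}^{+}\bigr]
+ \bE_{y}\bigl[\mathrm{e}^{-q\tau_{z}^{+}},\,\tau_{z}^{+}<\tau_{x}^{-}\bigr]
+ q\,\bE_{y}\Bigl[\int_{0}^{\tau_{x}^{-}\wedge\tau_{z}^{+}}\mathrm{e}^{-qs}\,ds\Bigr],
\end{align}
identify the first piece by Proposition \ref{prop:exitProblem1} and the third by the occupation formula \eqref{potentialDensityFormula} plus Theorem \ref{thm:potentialDensity}, and solve for the second. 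The algebra checks out exactly, and the $q=0$ case follows as you say (or directly since $\bP_y[\tau_x^-\wedge\tau_z^+<\infty]=1$ by Proposition \ref{prop:integrabilityOfW} and $Z^{(0)}\equiv 1$). Since the paper merely quotes this as \cite[Corollary 3.7]{NobaGeneralizedScaleFunc} without reproving it, there is no in-paper argument to compare against, but this is the standard route.

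Two small corrections to the justification. First, the vanishing of $\bE_{y}\bigl[\int_{(0,T]}\mathrm{e}^{-qs}dL^{u}_{s}\bigr]$ for $u\notin(x,z)$ is not really a consequence of the definitions \eqref{scaleFunc}--\eqref{scaleFunc-offDiagonal}; it is a path statement. Before $T=\tau_{x}^{-}\wedge\tau_{z}^{+}$ the process stays in $(x,z)$, so $L^{u}$ has not accumulated, and any contribution exactly at time $T$ is killed either by continuity of $L^{u}$ (regular $u$) or by $m\{u\}=0$ (irregular $u$); Remark \ref{rem:W(x,x)m(x)=0} is what you want for $u\in\{x,z\}$. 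Second, $y$ is an interior point and is already covered by Theorem \ref{thm:potentialDensity}; there is no boundary atom at $y$ to worry about. (Also, $\tau_{x}^{-}\neq\tau_{z}^{+}$ on $\{\tau_{x}^{-}\wedge\tau_{z}^{+}<\infty\}$ holds simply because $X_{t}\le x<z\le X_{t}$ is impossible; the no-negative-jumps hypothesis is not what is being used there.) None of this affects the validity of the argument.
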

Note that in this paper, $\int_{(x, y)}$ with $y < x$ denotes the integration on the empty set, and the value is equal to $0$. 
\begin{Rem}
In \cite{NobaGeneralizedScaleFunc}, a stronger condition than (A2) is assumed, and the statements of Proposition \ref{prop:exitProblem1} and \ref{prop:exitProblemZ} and Theorem \ref{thm:potentialDensity} are slightly generalized here.
However, we can obtain them by the same proof. 
\end{Rem}

\section{Analyticity of scale functions and the resolvent identity} \label{section:analyticExtension}

In this section, we prove two important properties of scale functions. 
The first is \eqref{eq78},
where, for the functions $f,g: I^{2} \to \bR$ and $x, y \in I$, we define
\begin{align}
	f \otimes g (x,y) := \int_{(x, y)}f(x,u)g(u,y) m(du) \label{eq09}
\end{align}
when $f$ and $g$ are non-negative or
\begin{align}
	\int_{(x, y)}|f(x,u)g(u,y)| m(du) < \infty. \label{}
\end{align}
The second is the series expansion \eqref{304} for $q\in\bC$,
where $f^{\otimes 1} := f$ and $f^{\otimes n} := f \otimes f^{n-1} \ (n \geq 2)$. 
We also show similar properties for $Z^{(q)}$.
Note that the product $\otimes$ is associative, that is, $f \otimes (g \otimes h) = (f \otimes g) \otimes h$, which follows from Fubini's theorem. 
\begin{Rem}
	The map $x \mapsto W^{\otimes n} (x, y)$ with $y\in I$ is non-increasing. 
	This fact can be confirmed inductively for $n$. 
\end{Rem}
{Before proving the main results in this section, we introduce the following proposition about finiteness, which is useful to compute some limits.

\begin{Prop} \label{prop:integrabilityOfW}
	For every $q \geq 0$ and $x, y, z \in I$ with $x < y < z $, it holds $\bE_{y}[\tau_{x} \wedge \tau_{z}^{+}] < \infty$ and
	\begin{align}
		\int_{(x, z]}W^{(q)}(x,u) m(du) < \infty. \label{eq24}
	\end{align}
	In addition, it holds
	\begin{align}
		\lim_{x \to z} \int_{(x,z]}W^{(q)}(x,u) m(du) = 0. \label{eq17}
	\end{align}
\end{Prop}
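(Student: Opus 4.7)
The plan is to deduce all three assertions from a single observation: applying Theorem \ref{Thm201} with the initial state taken to be $z$ itself yields a particularly clean identity for $W^{(q)}(x,v)$, because the convention \eqref{scaleFunc-offDiagonal} forces $W^{(q)}(z,v)=0$ whenever $v<z$. In order of execution, I would first establish \eqref{eq24}, then deduce finiteness of $\bE_y[\tau_x\wedge\tau_z^+]$, and finally prove \eqref{eq17}.

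Fix $q>0$ and apply Theorem \ref{Thm201} with $y$ replaced by $z$; the term $W^{(q)}(z,v)$ vanishes for $v\in(x,z)$, so integrating the resulting identity against $m$ via the occupation density formula \eqref{potentialDensityFormula} yields
\begin{align}
\bE_z\!\left[\int_0^{\tau_x^-} e^{-qt}\mathbf{1}_{X_t\in(x,z)}\,dt\right] = \bE_z\!\left[e^{-q\tau_x^-},\tau_x^-<\infty\right]\int_{(x,z)}W^{(q)}(x,v)\,m(dv).
\end{align}
The left-hand side is bounded by $1/q$, and the prefactor on the right is strictly positive by (A2) together with the absence of negative jumps. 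Dividing therefore gives $\int_{(x,z)}W^{(q)}(x,v)\,m(dv)<\infty$, and the single atom $W^{(q)}(x,z)\,m\{z\}$ is finite, which proves \eqref{eq24} for $q>0$. Since $q\mapsto W^{(q)}(x,v)$ is non-decreasing (immediate from \eqref{scaleFunc}), the $q=0$ case follows by comparison with any fixed $q_0>0$.

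Next, to derive $\bE_y[\tau_x\wedge\tau_z^+]<\infty$, apply Theorem \ref{thm:potentialDensity} at $q=0$ and integrate over $u\in(x,z)$ against $m$; by \eqref{potentialDensityFormula} the left-hand side becomes $\bE_y[\tau_x\wedge\tau_z^+]$, and hence
\begin{align}
\bE_y[\tau_x\wedge\tau_z^+] = \int_{(x,z)}\!\left[\frac{W(x,u)W(y,z)}{W(x,z)}-W(y,u)\right]m(du) \le \frac{W(y,z)}{W(x,z)}\int_{(x,z)}W(x,u)\,m(du) < \infty,
\end{align}
where the last bound uses \eqref{eq24}. For \eqref{eq17}, the identity displayed above can be rearranged into
\begin{align}
\int_{(x,z)}W^{(q)}(x,v)\,m(dv) = \frac{\bE_z\!\left[\int_0^{\tau_x^-} e^{-qt}\mathbf{1}_{X_t\in(x,z)}\,dt\right]}{\bE_z[e^{-q\tau_x^-},\tau_x^-<\infty]}.
\end{align}
By Remark \ref{rem:downwardRegularity}, $\tau_x^-\downarrow 0$ $\bP_z$-a.s. as $x\uparrow z$, so bounded convergence (with the dominant $1/q$) sends the numerator to $0$ and the denominator to $1$. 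The atom $W^{(q)}(x,z)\,m\{z\}$ is dispatched by the same derivation taken at $v=z$, using $W^{(q)}(z,z)=0$ in the regular case (which is the only case with $m\{z\}>0$, by Remark \ref{rem:W(x,x)m(x)=0}). The $q=0$ limit again follows from monotonicity.

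The main obstacle is obtaining a one-sided upper bound on $\int W^{(q)}(x,v)\,m(dv)$: the identities of Theorems \ref{thm:potentialDensity} and \ref{Thm201} present $W^{(q)}(x,v)$ only inside a \emph{difference} of two a priori possibly-infinite quantities, and the mere non-negativity of the resulting killed potential density does not yield finiteness of either summand. The trick of starting the process from $z$ eliminates one summand by \eqref{scaleFunc-offDiagonal}, cleanly isolating the desired bound.
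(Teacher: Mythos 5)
Your proof is correct, and for \eqref{eq24} and for $\bE_y[\tau_x\wedge\tau_z^+]<\infty$ it is essentially the paper's argument: both exploit Theorem~\ref{Thm201} with the starting point placed at the right endpoint of the integration interval, so that the term $W^{(q)}(\cdot,v)$ vanishes by \eqref{scaleFunc-offDiagonal} and the resulting occupation-time quantity is trivially bounded by $1/q$, while the case $q=0$ follows by comparison with any fixed $q_0>0$. Where you deviate is \eqref{eq17}: the paper obtains it directly by applying dominated convergence to the integrand $u\mapsto W^{(q)}(x,u)1_{(x,z]}(u)$, using Proposition~\ref{prop:continuityOfW} and Remark~\ref{rem:W(x,x)m(x)=0}, whereas you rewrite the integral as the ratio $\bE_z\bigl[\int_0^{\tau_x^-}e^{-qt}1_{(x,z)}(X_t)\,dt\bigr]\big/\bE_z[e^{-q\tau_x^-},\tau_x^-<\infty]$ and send $x\uparrow z$, using Remark~\ref{rem:downwardRegularity} to get $\tau_x^-\downarrow0$ $\bP_z$-a.s.\ so that the numerator vanishes and the denominator tends to $1$ by bounded convergence. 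That route is valid and arguably more probabilistically transparent, but note it still relies on the left-continuity in Proposition~\ref{prop:continuityOfW} to dispose of the possible atom $W^{(q)}(x,z)m\{z\}$, so in the end you use the same ingredients as the paper rearranged.
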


\begin{proof}
	Let $q > 0$.
	From Theorem \ref{Thm201}, we see for every $x, y \in I$ with $x < y$
	\begin{align}
		q^{-1} &\geq\int_{0}^{\infty}\mathrm{e}^{-qt} \bP_{y}[x < X_{t} \leq y, \tau_{x}  > t] dt \label{} \\
		&= \bE_{y}[\mathrm{e}^{-q\tau_{x}},\tau_x<\infty]\int_{(x, y]} W^{(q)}(x,u) m(du). \label{eq20-2} 
	\end{align}
	Since it obviously holds $W^{(q)}(x,z)m\{z\} < \infty$, we see \eqref{eq24} holds for $q > 0$.
	From the definition of $W^{(q)}$, it clearly holds $0 \leq W(x,u) \leq W^{(q)}(x,u)$,
	and we have \eqref{eq24} for $q = 0$.
	%{[Added.]}
	From Theorem \ref{thm:potentialDensity}, it holds
	\begin{align}
		\bE_{y}[\tau_{x} \wedge \tau_{z}^{+}] &= \int_{0}^{\infty} \bP_{y}[\tau_{x} \wedge \tau_{z}^{+} > t] dt \label{} \\
		&=\bP_{y}[\tau_{x} < \tau_{z}^{+}] \int_{(x,z)}W(x,u)m(du) - \int_{(y,z)}W(y,u)m(du) < \infty, \label{} 
	\end{align}
	and we see $\bE_{y}[\tau_{x} \wedge \tau_{z}^{+}] < \infty$.
	From Proposition \ref{prop:continuityOfW}, Remark \ref{rem:W(x,x)m(x)=0} and the dominated convergence theorem we have
	\begin{align}
		\lim_{x \to z} \int_{(x,z]} W^{(q)}(x,u) m(du) = W^{(q)}(z,z)m\{z\} = 0, \label{}
	\end{align}
	{and the proof is complete.}
\end{proof}

%For the functions $f,g: {{I}}^{2} \to \bR$, we define the product 
%\begin{align}
%f \otimes g (x,y) := \int_{(x, y)}f(x,u)g(u,y) m(du). \label{}
%\end{align}
%when $f$ and $g$ are non-negative or
%\begin{align}
%	\int_{(x, y)}|f(x,u)g(u,y)| m(du) < \infty. \label{}
%\end{align}
%{[Wrote the definition explicitly.]}

To prove \eqref{304}, we first show that the RHS of \eqref{304} is well-defined and analytic on $\bC$ and is the unique solution of a Volterra integral equation.

\begin{Prop} \label{prop:defOfM}
	For every $n \geq 1$ and $x ,y \in I$, it holds
	\begin{align}
		W^{\otimes n} (x,y) \leq \frac{W(x,y)}{(n-1)!} \left( \int_{(x, y)}W(x,u) m(du) \right)^{n-1}, \label{eq01}
	\end{align}
	which implies for $q \in \bC$ it holds
	\begin{align}
		\sum_{n \geq 0}|q|^{n} W^{\otimes (n+1)}(x,y) \leq W(x,y) \mathrm{e}^{|q|\int_{(x, y)}W(x,u) m(du)} \label{eq03}
	\end{align}
	and
	\begin{align}
		M^{(q)}(x,y) := \sum_{n \geq 0} q^{n}W^{\otimes (n+1)}(x,y) \label{seriesExpansionOfM}
	\end{align}
	defines an entire function in $q \in \bC$ for every fixed $x, y \in I$.
	In addition, for every $x, z \in I$ with $x < z  $ the function $f = M^{(q)}(x,\cdot)$ is the unique function satisfying for $y \in (x,z]$
	\begin{align}
		f(y) = W(x,y) + q \int_{(x, y)}f(u) W (u,y) m(du) \label{IntegralEq} 
	\end{align}
	and
	\begin{align}
		\int_{(x, y)}|f(u)|W(u,y) m(du) < \infty.  \label{eq04}
	\end{align}
\end{Prop}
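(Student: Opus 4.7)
The plan is to establish the three assertions in order, with the bound \eqref{eq01} as the central estimate. I would prove \eqref{eq01} by induction on $n \geq 1$. The case $n = 1$ is immediate. For the inductive step, by associativity of $\otimes$ we have $W^{\otimes(n+1)}(x,y) = \int_{(x,y)} W^{\otimes n}(x,u) W(u,y)\, m(du)$, and the monotonicity noted in the preceding remark gives $W(u,y) \leq W(x,y)$ for $x \leq u \leq y$. Combined with the inductive hypothesis applied at $(x,u)$, this reduces the problem to the inequality
\begin{align}
\int_{(x,y)} W(x,u) \left(\int_{(x,u)} W(x,v)\, m(dv)\right)^{n-1} m(du) \leq \frac{1}{n} \left(\int_{(x,y)} W(x,v)\, m(dv)\right)^n.
\end{align}
Writing $F(u) := \int_{(x,u]} W(x,v)\, m(dv)$ for the right-continuous distribution function of the measure $W(x,\cdot)m(d\cdot)$ (finite on $(x,y)$ by Proposition \ref{prop:integrabilityOfW}), this is the It\^o-type inequality $n\int_{(x,y)} F(u-)^{n-1} dF(u) \leq F(y-)^n$, valid for any non-decreasing right-continuous $F$ with $F(x)=0$, which follows from the convexity of $t \mapsto t^n$ by a jump-by-jump telescoping estimate $F(u)^n - F(u-)^n \geq n F(u-)^{n-1} \Delta F(u)$.

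From \eqref{eq01}, the series $\sum_n q^n W^{\otimes(n+1)}(x,y)$ is dominated termwise in absolute value by $W(x,y)\, |q|^n G^n/n!$, where $G := \int_{(x,y)} W(x,u)\, m(du) < \infty$, giving \eqref{eq03}; uniform convergence on compacta of $\bC$ then yields the analyticity of $q \mapsto M^{(q)}(x,y)$. To verify that $M^{(q)}(x,\cdot)$ satisfies \eqref{IntegralEq}, I would apply Fubini (justified by this absolute convergence) to
\begin{align}
\sum_{n \geq 1} q^n W^{\otimes(n+1)}(x,y) &= q \sum_{n \geq 0} q^n \int_{(x,y)} W^{\otimes(n+1)}(x,u) W(u,y)\, m(du) \\
&= q \int_{(x,y)} M^{(q)}(x,u) W(u,y)\, m(du),
\end{align}
and check \eqref{eq04} by bounding $|M^{(q)}(x,u)| \leq e^{|q|G} W(x,u)$, which yields $\int_{(x,y)} |M^{(q)}(x,u)| W(u,y)\, m(du) \leq e^{|q|G} W^{\otimes 2}(x,y) < \infty$.

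For uniqueness, let $f$ be another solution and set $g := f - M^{(q)}(x,\cdot)$, so that $g(y) = q\int_{(x,y)} g(u) W(u,y)\, m(du)$ and $\int_{(x,y)} |g(u)| W(u,y)\, m(du) < \infty$ by \eqref{eq04} and the previous bound on $M^{(q)}$. Iterating this relation $n$ times (Fubini justified at each step by the same integrability combined with \eqref{eq01}) yields
\begin{align}
g(y) = q^n \int_{(x,y)} g(u) W^{\otimes n}(u,y)\, m(du).
\end{align}
Applying \eqref{eq01} together with the monotonicity bound $\int_{(u,y)} W(u,v)\, m(dv) \leq G$ (which follows from $W(u,v) \leq W(x,v)$ for $u \geq x$), we obtain
\begin{align}
|g(y)| \leq \frac{|q|^n G^{n-1}}{(n-1)!} \int_{(x,y)} |g(u)| W(u,y)\, m(du),
\end{align}
which tends to $0$ as $n \to \infty$, so $g \equiv 0$ on $(x,z]$. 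The main step I expect to require genuine thought is the inductive setup for \eqref{eq01}: the key is to use the monotonicity $W(u,y) \leq W(x,y)$ to factor out $W(x,y)$, after which the remaining estimate is the standard It\^o-type calculus inequality above; once this is in place, everything else is routine Fubini together with the exponential bound \eqref{eq03}.
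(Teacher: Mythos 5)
Your proposal is correct and follows essentially the same route as the paper: induction on $n$ using the monotonicity $W(u,y)\le W(x,y)$ to factor out $W(x,y)$, the Stieltjes inequality $\int F^{n-1}\,dF \le F^n/n$ (the paper uses the left-continuous $F_x(u)=\int_{(x,u)}W(x,v)\,m(dv)$ where you write $F(u-)$ for a right-continuous $F$, which is the same object), then the exponential bound and Fubini for \eqref{IntegralEq}, and the same iterated-kernel argument for uniqueness. Your verification of \eqref{eq04} via $|M^{(q)}(x,u)|\le e^{|q|G}W(x,u)$ is a slightly cleaner variant of the paper's direct summation $\sum_n|q|^n W^{\otimes(n+2)}(x,y)=|q|^{-1}(M^{(|q|)}(x,y)-W(x,y))$, but the content is identical.
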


\begin{proof}
	The statement is obvious when $x>y$ since $W^{\otimes k}(x, y)= 0$ for $k\in\bN$. 
	Therefore, we assume $x\leq y$ in the following. 
	We show \eqref{eq01} by induction.
	The case $n = 1$ is obvious.
	Define $F_{x}(y) := \int_{( x, y)}W(x,u) m(du)$ for $x \leq y$.
	Suppose \eqref{eq01} holds for some $n \geq 1$.
	Then it holds
	\begin{align}
		W^{\otimes (n+1)}(x,y) &= \int_{(x, y)}W^{\otimes n}(x,u)W(u,y) m(du) \label{} \\
		&\leq \frac{1}{(n-1)!}\int_{(x, y)}W(x,u)W(u,y) F_{x}(u)^{n-1} m(du) \label{}\\
		&\leq \frac{W(x,y)}{(n-1)!}\int_{(x, y)}W(x,u) F_{x}(u)^{n-1} m(du) \label{}\\ 
		&= \frac{W(x,y)}{(n-1)!}\int_{(x, y)} F_{x}(u)^{n-1} dF_{x}(u) \label{} \\
		&\leq \frac{W(x,y)}{n!}F_{x}(y)^{n}. \label{eq02}
	\end{align}
	Thus, we obtain \eqref{eq01}{, and \eqref{eq03} as well}.
	Next, we show $f = M^{(q)}(x, \cdot)$ is the unique function satisfying \eqref{IntegralEq} and \eqref{eq04} for every fixed $x, z \in I$ with $x < z$.
	The function $f= M^{(q)}(x,\cdot)$ satisfies \eqref{eq04} since for $q \neq 0$
	\begin{align}
		\int_{(x, y)}|f(u)|W(u,y){m(du)}=&\sum_{n\geq 0}{|q|}^n \int_{(x, y)} W^{\otimes(n+1)}(x, u)W(u ,y) m(du)
		\\
		=& \sum_{n\geq 0}{|q|}^nW^{\otimes (n+2)}(x, y)\\
		=& \frac{1}{|q|}\rbra{M^{(|q|)}(x, y)-W(x, y)} < \infty, \label{316}
	\end{align}
	and thus satisfies \eqref{IntegralEq}.
	Let $G$ be another solution and set $H := G - M^{(q)}(x,\cdot)$.
	Then it holds for $y \in (x,z]$
	\begin{align}
		H(y) = q \int_{(x,y)}H(u) W(u,y) m(du). \label{eq69}
	\end{align}
	Since it holds from \eqref{eq01} for every $n \geq 1$
	\begin{align}
		\int_{(x, y)}|H(u)|W^{\otimes n}(u,y) m(du) \leq \frac{F_{x}(y)^{n-1}}{(n-1)!} \int_{(x,y)} |H(u)| W(u,y) m(du) < \infty, \label{eq50}
	\end{align}
	we see inductively that
	\begin{align}
		H(y) = q^{n} \int_{(x, y)}H(u)W^{\otimes n}(u,y) m(du). \label{1}
	\end{align}
	Then from \eqref{eq50} we have
	\begin{align}
	|H(y)| \leq \frac{q^n F_{x}(y)^{n-1} }{(n-1)!}\int_{(x, y)}|H(u)|W(u,y) m(du) \xrightarrow[]{n \to \infty} 0,\label{430}
	\end{align}
	and we obtain the uniqueness.
\end{proof}

A review of the proof of Proposition \ref{prop:defOfM} shows that the uniqueness of the solution can be slightly extended.

\begin{Cor}\label{Cor404}
	For every $x, z \in I$ with $x < z$, the function $f = M^{(q)}(x,\cdot)$ in Proposition \ref{prop:defOfM} is a unique solution of \eqref{IntegralEq} in the following sense: for some $m$-null set $N$, if $f:(x,z] \to \bR$ satisfies \eqref{IntegralEq} and \eqref{eq04} for $y \in (x,z] \setminus N$, it holds $f(y) = M^{(q)}(x,y) $ for $ y \in (x,z] \setminus N$.
\end{Cor}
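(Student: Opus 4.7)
The plan is to reproduce the uniqueness argument from the end of the proof of Proposition \ref{prop:defOfM}, paying attention to the fact that \eqref{IntegralEq} is now assumed only off an $m$-null set $N$. The essential observation is that every integral appearing in that argument is taken against $m$, so the values of the various integrands on $N$ are irrelevant.

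First, I would set $H(y) := f(y) - M^{(q)}(x, y)$ for $y \in (x, z]$. Because $M^{(q)}(x, \cdot)$ satisfies \eqref{IntegralEq} and \eqref{eq04} at \emph{every} $y \in (x, z]$ (the latter by the computation \eqref{316}), subtraction yields
\begin{equation*}
H(y) = q \int_{(x, y)} H(u) W(u, y) m(du), \qquad \int_{(x, y)} |H(u)| W(u, y) m(du) < \infty
\end{equation*}
for $y \in (x, z] \setminus N$. Next, I would iterate: split $(x, y) = ((x, y) \setminus N) \cup ((x, y) \cap N)$, note that the second piece contributes nothing to an $m$-integral, substitute the identity for $H(u)$ (valid for $u \in (x, y) \setminus N$) into the integrand, and apply Fubini's theorem, which is justified exactly by the bound in \eqref{eq50}. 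This gives, inductively,
\begin{equation*}
H(y) = q^{n} \int_{(x, y)} H(u) W^{\otimes n}(u, y) m(du) \quad (y \in (x, z] \setminus N, \ n \geq 1).
\end{equation*}
Finally, I would bound $|H(y)|$ via \eqref{eq01} by $\frac{|q|^{n} F_{x}(y)^{n-1}}{(n-1)!} \int_{(x, y)} |H(u)| W(u, y) m(du)$, which tends to $0$ as $n \to \infty$, forcing $H(y) = 0$ on $(x, z] \setminus N$.

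I do not expect a serious obstacle. The only real care lies in the bookkeeping that shows one may freely replace integrals over $(x, y)$ by integrals over $(x, y) \setminus N$ at each stage of the iteration, and that Fubini is applicable throughout; both are immediate from the $m$-nullity of $N$ and the integrability estimates already established in the proof of Proposition \ref{prop:defOfM}.
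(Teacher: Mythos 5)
Your proof is correct and follows precisely the route the paper indicates: re-run the uniqueness argument from Proposition \ref{prop:defOfM} (the passage from \eqref{eq69} through \eqref{430}), observing that every step is an integral against $m$, so that the exceptional $m$-null set $N$ can be dropped from the domain of integration without affecting anything. There is nothing to add.
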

The proof of Corollary \ref{Cor404} can be obtained by changing the function $G$ in the proof so that it satisfies \eqref{IntegralEq} and \eqref{eq04} on $(x,z] \setminus N$ for some $m$-null set $N$. 
This is because \eqref{eq69}-\eqref{430} hold for $y \in (x,z] \setminus N$. 
%{We omit the detail of the proof.}
%{[I don't think this excuse is necessary because the above description gives everything for the proof.]}

From Proposition \ref{prop:defOfM}, in order to show \eqref{304} it suffices to show that $W^{(q)}$ is a solution of the equation \eqref{IntegralEq}.
We now prove \eqref{eq78} for $q,r \geq 0$, from which we see that $f = W^{(q)}$ solves \eqref{IntegralEq}.
% is the generalization of \eqref{304}, holds in particular cases. 

\begin{Thm} \label{thm:WequalM}
	For every $q \geq 0$, it holds 
	\begin{align}
		W^{(q)}(x, y) = M^{(q)}(x, y )  \quad (x, y \in I). \label{329}
	\end{align}
\end{Thm}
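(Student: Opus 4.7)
The plan is to show that, for fixed $x \in I$, the function $f(y) := W^{(q)}(x,y)$ satisfies the Volterra equation \eqref{IntegralEq} together with the integrability condition \eqref{eq04}, and then appeal to Proposition \ref{prop:defOfM}. For $q = 0$ this is trivial, so I assume $q > 0$ throughout.

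The main computation starts from the exit formula of Proposition \ref{prop:exitProblem1} applied with starting point $w \in (x,y)$, lower boundary $x$, and upper boundary $y$, giving $\bE_w[\mathrm{e}^{-q\tau_x^-}, \tau_x^- < \tau_y^+] = W^{(q)}(w,y)/W^{(q)}(x,y)$. Using $\mathrm{e}^{-q\tau_x^-} = 1 - q\int_0^{\tau_x^-}\mathrm{e}^{-qs}\,ds$ together with Fubini, the Markov property, and Proposition \ref{prop:exitProblem1} at $q = 0$ (which yields $\bP_{X_s}[\tau_x^- < \tau_y^+] = W(X_s, y)/W(x, y)$ for $s < \tau_x^- \wedge \tau_y^+$), one replaces the indicator $1_{\tau_x^- < \tau_y^+}$ inside the expectation by $W(X_s, y)/W(x, y)$. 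The occupation density formula \eqref{occupationTimeFormula} followed by Theorem \ref{thm:potentialDensity} then rewrites the resulting expectation as an integral against $m$ of scale-function quantities. After clearing denominators one arrives at
\begin{align}
\frac{W^{(q)}(w,y)}{W^{(q)}(x,y)}\,H(x,y) \;=\; H(w,y),\qquad x<w<y,
\end{align}
where $H(a,b) := W(a,b) + q\int_{(a,b)} W^{(q)}(a,v)\,W(v,b)\,m(dv)$. Hence $H(w,y)/W^{(q)}(w,y)$ depends only on $y$; denote it $C(y)$.

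To pin down $C(y) = 1$, I would let $w \to y^-$. The map $v \mapsto W(v,y) = 1/n_y[\tau_v^- < \infty]$ is non-increasing on $(\ell_1, y)$ since $\{\tau_v^- < \infty\}$ grows with $v$; this yields the bound
\begin{align}
\int_{(w,y)} W^{(q)}(w,v)\,W(v,y)\,m(dv) \;\le\; W(w,y)\int_{(w,y)} W^{(q)}(w,v)\,m(dv) \;=\; o(W(w,y))
\end{align}
by \eqref{eq17} of Proposition \ref{prop:integrabilityOfW}, so $H(w,y)/W(w,y) \to 1$. In parallel, $W^{(q)}(w,y)/W(w,y) \to 1$ by a short computation on the excursion measure exploiting $\tau_w^- \to 0$ $n_y$-a.e.\ as $w \to y^-$ (a consequence of the downward regularity at $y$ noted in Remark \ref{rem:downwardRegularity}). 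Comparing these two limits forces $C(y) = 1$, so $W^{(q)}(x,\cdot)$ satisfies \eqref{IntegralEq}.

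The integrability condition \eqref{eq04} follows from the same monotonicity bound together with Proposition \ref{prop:integrabilityOfW}, and Proposition \ref{prop:defOfM} then delivers $W^{(q)}(x,y) = M^{(q)}(x,y)$. I expect the main obstacle to be precisely the identification $C(y) = 1$ via $w \to y^-$: in the regular case both $W(w,y)$ and $W^{(q)}(w,y)$ vanish, so one cannot substitute $w = y$ and must instead obtain genuine two-sided asymptotic control of the ratios $H(w,y)/W(w,y)$ and $W^{(q)}(w,y)/W(w,y)$.
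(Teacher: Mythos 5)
Your argument is essentially sound and runs along the same lines as the paper's, so I will only highlight the points of genuine divergence. Both proofs reduce the claim to checking that $W^{(q)}(x,\cdot)$ satisfies the Volterra equation \eqref{IntegralEq} and then invoke the uniqueness in Proposition \ref{prop:defOfM}; both establish this by combining Proposition \ref{prop:exitProblem1} with Theorem \ref{thm:potentialDensity} to exhibit a $w$-independent ratio, and then identify that constant as $1$ by sending $w\to y^-$. Your route to the key ratio is a bit different in flavor: you expand $\mathrm{e}^{-q\tau_x^-}=1-q\int_0^{\tau_x^-}\mathrm{e}^{-qs}\,ds$ inside the exit formula, which directly produces the convolution $W^{(q)}\otimes W$, i.e.\ the Volterra equation in the exact form needed. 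The paper instead computes the double Laplace transform $\int_0^\infty\mathrm{e}^{-rt}\bE_y[W^{(q)}(X_t,z),\tau_x^-\wedge\tau_z^+>t]\,dt$ for a second parameter $r\ge0$; this first yields $W^{(r)}\otimes W^{(q)}$, and after a $q\leftrightarrow r$ symmetrisation gives not only the Volterra equation (at $r=0$) but also the resolvent identity \eqref{eq14} for all $q,r\ge0$ at no extra cost. With your $r=0$-only approach, the resolvent identity must be recovered afterward from the series \eqref{304} rather than emerging as a free byproduct.

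The one place where you should be more careful is the step you yourself flagged: establishing $\lim_{w\to y^-}W^{(q)}(w,y)/W(w,y)=1$. Your appeal to ``$\tau_w^-\to 0$ $n_y$-a.e.'' is the correct intuition (it follows from downward regularity at $y$ in Remark \ref{rem:downwardRegularity} together with the absence of negative jumps, since $\tau_w^-\downarrow\tau_{I_{<y}}$ as $w\uparrow y$), but since $n_y$ may be an infinite measure when $y$ is regular, one cannot simply pass the limit through the quotient $n_y[\mathrm{e}^{-0\cdot\tau_w^-},\tau_w^-<\infty]/n_y[\mathrm{e}^{-q\tau_w^-},\tau_w^-<\infty]$ by dominated convergence: both numerator and denominator diverge. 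The paper handles this with an $\varepsilon$-truncation sandwich, writing
\begin{align}
\frac{n_y[\mathrm{e}^{-0\cdot\tau_w^-},\tau_w^-<\varepsilon]}{n_y[\tau_w^-<\varepsilon]+n_y[\varepsilon\le\tau_w^-<\infty]}
\le \frac{W^{(q)}(w,y)}{W(w,y)}
\le \frac{n_y[\tau_w^-<\varepsilon]+n_y[\varepsilon\le\tau_w^-<\infty]}{n_y[\mathrm{e}^{-q\tau_w^-},\tau_w^-<\varepsilon]},
\end{align}
and then using that $n_y[\varepsilon\le\tau_w^-<\infty]\to 0$ as $w\to y^-$ (the family $\{\varepsilon\le\tau_w^-\}$ decreases to a null set inside a fixed set of finite $n_y$-measure), which gives bounds $\mathrm{e}^{-0\cdot\varepsilon}$ and $\mathrm{e}^{q\varepsilon}$, and finally lets $\varepsilon\to0$. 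You should carry out this two-parameter limit (or an equivalent quantitative argument) rather than stopping at ``a short computation''; otherwise the identification $C(y)=1$, which you correctly recognize as the crux, is not actually proved.
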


\begin{proof}
When $x>y$, the assertion is obvious since both sides of \eqref{329} are zero. 
The case with $x=y$ is also obvious since $M^{(q)}(x,x)=W(x, x)=W^{(q)}(x, x )$ by the definitions of $\otimes$ and {Remark \ref{rem:downwardRegularity}}.
Thus, we prove \eqref{329} with $x< y$ in the following. 
\par
	Let $q \geq 0$ and $x, z \in I$ with $x < z$.
	{We show $f = W^{(q)}(x,\cdot)$ satisfies the equation \eqref{IntegralEq} on $(x,z]$.}
	For every $t \geq 0$ and $y \in (x,z)$, it holds from Proposition \ref{prop:exitProblem1}
	\begin{align}
		\bE_{y}[W^{(q)}(X_{t},z), \tau^-_{x} \wedge \tau_{z}^{+} > t]
		&= W^{(q)}(x,z)\bE_{y}[\bE_{X_{t}}[\mathrm{e}^{-q\tau^-_{x}}, \tau^-_{x} < \tau_{z}^{+}] , \tau^-_{x} \wedge \tau_{z}^{+}> t] \label{} \\
		&= \mathrm{e}^{qt}W^{(q)}(x,z) \bE_{y}[\mathrm{e}^{-q\tau^-_{x}},t < \tau^-_{x} < \tau_{z}^{+}]. \label{}
	\end{align}
	Thus, we have for $r, q \geq 0$ with $r\neq q$,
	\begin{align}
		&\int_{0}^{\infty}\mathrm{e}^{-rt} \bE_{y}[W^{(q)}(X_{t},z), {\tau^-_{x}} \wedge \tau_{z}^{+}> t]dt \label{} \\
		= &W^{(q)}(x,z)\bE_{y} \left[ \mathrm{e}^{-q{\tau^-_{x}}} \int_{0}^{{\tau^-_{x}}} \mathrm{e}^{-(r-q)t}dt, {\tau^-_{x}} < \tau_{z}^{+}  \right] \label{} \\
		= &\frac{W^{(q)}(x,z)}{r-q}\bE_{y} \left[ \mathrm{e}^{-q{\tau^-_{x}}} - \mathrm{e}^{-r{\tau^-_{x}}} , {\tau^-_{x}} < \tau_{z}^{+}  \right] \label{} \\
		= & \frac{W^{(q)}(x,z)}{r-q} \left( \frac{W^{(q)}(y,z)}{W^{(q)}(x,z)} - \frac{W^{(r)}(y,z)}{W^{(r)}(x,z)} \right). \label{eq05}
	\end{align}
	On the other hand, from Theorem \ref{thm:potentialDensity}, we have
	\begin{align}
		&\int_{0}^{\infty}\mathrm{e}^{-rt} \bE_{y}[W^{(q)}(X_{t},z), {\tau^-_{x}} \wedge \tau_{z}^{+} > t]dt \label{} \\
		= & \int_{(x, z)}\left(\frac{W^{(r)}(x,u)W^{(r)}(y,z)}{W^{(r)}(x,z)} - W^{(r)}(y,u)\right) W^{(q)}(u,z) m(du) \label{} \\
		= & \frac{W^{(r)}(y,z)}{W^{(r)}(x,z)} W^{(r)}\otimes W^{(q)}(x,z) - W^{(r)} \otimes W^{(q)}(y,z). \label{eq06}
	\end{align}
	From \eqref{eq05} and \eqref{eq06}, we see
	\begin{align}
	%\begin{aligned}
		&\frac{W^{(q)}(x,z) + (r-q)W^{(r)}\otimes W^{(q)}(x,z)}{W^{(r)}(x,z)} \\
		& = \frac{W^{(q)}(y,z) + (r-q)W^{(r)}\otimes W^{(q)}(y,z)}{W^{(r)}(y,z)}. 
		%\end{aligned}
		\label{eq08}
	\end{align}
	Thus, the RHS of \eqref{eq08} is a constant in $y \in (x,z)$ and we write it $c(q,r)$. 
	We prove $c(q,r) = 1$ for every $q,r \geq 0$. 
	
	First, let $r \geq q$.
	It clearly holds $W^{(q)}(y,z) / W^{(r)}(y,z) \leq 1$ by the definition of $W^{(q)}$.
	Since it holds
	\begin{align}
		W^{(r)} \otimes W^{(q)}(y,z) \leq W^{(q)}(y,z)\int_{(y, z)}W^{(r)}(y,u) m(du), \label{}
	\end{align}
	we see from \eqref{eq17}
	\begin{align}
		\frac{W^{(r)} \otimes W^{(q)}(y,z)}{W^{(r)}(y,z)} \leq \int_{(y, z)}W^{(r)}(y,u)m(du) \xrightarrow[]{y \to z-} 0. \label{}
	\end{align}
	By considering the limit $y \to z-$ in the RHS of \eqref{eq08}, we see for $r \geq q$
	\begin{align}
		c(q,r) = \lim_{y \to z-} \frac{W^{(q)}(y,z)}{W^{(r)}(y,z)} \geq 0. \label{eq11}
	\end{align}
	By \eqref{scaleFunc}, we have  
	\begin{align}
	\frac{W^{(q)}(y,z)}{W^{(r)}(y,z)}=\frac{n_z[\mathrm{e}^{-r {\tau^-_y}} ,{\tau^-_y}<\infty]}{n_z[\mathrm{e}^{-q {\tau^-_y}} ,{\tau^-_y}<\infty] }, 
	\end{align}
	and for any $\varepsilon>0$, 
	\begin{align}
	\frac{n_z[\mathrm{e}^{-r {\tau^-_y}} ,{\tau^-_y}<\varepsilon]}{n_z[{\tau^-_y}<\varepsilon]+ n_z[\varepsilon\leq {\tau^-_y}<\infty] }
	&\leq
	\frac{n_z[\mathrm{e}^{-r {\tau^-_y}},{\tau^-_y}<\infty]}{n_z[\mathrm{e}^{-q {\tau^-_y}} ,{\tau^-_y}<\infty] }\\
	&\leq
	\frac{n_z[{\tau^-_y}<\varepsilon]+n_z[\varepsilon\leq {\tau^-_y}<\infty]}{n_z[\mathrm{e}^{-q {\tau^-_y}} ,{\tau^-_y}<\varepsilon] }. 
	\end{align}
	By the dominated convergence theorem, we have 
	\begin{align}
	&\lim_{y \to z-}\frac{n_z[\mathrm{e}^{-r {\tau^-_y}} ,{\tau^-_y}<\varepsilon]}{n_z[{\tau^-_y}<\varepsilon]+ n_z[\varepsilon\leq {\tau^-_y}<\infty] }
	\geq \lim_{y \to z-}\frac{\mathrm{e}^{-r\varepsilon }n_z[{\tau^-_y}<\varepsilon]}{n_z[{\tau^-_y}<\varepsilon]+ n_z[\varepsilon\leq {\tau^-_y}<\infty] }=\mathrm{e}^{-r \varepsilon}\\
	&\lim_{y \to z-}\frac{n_z[{\tau^-_y}<\varepsilon]+n_z[\varepsilon\leq {\tau^-_y}<\infty]}{n_z[\mathrm{e}^{-q {\tau^-_y}} ,{\tau^-_y}<\varepsilon] }\leq \lim_{y \to z-}\frac{n_z[{\tau^-_y}<\varepsilon]+n_z[\varepsilon\leq {\tau^-_y}<\infty]}{\mathrm{e}^{-q\varepsilon }n_z[{\tau^-_y}<\varepsilon] }=\mathrm{e}^{q \varepsilon}. 
	\end{align}
	Thus, for any $\varepsilon>0$ it holds $\mathrm{e}^{-r \varepsilon}  \leq c(q,r)\leq \mathrm{e}^{q\varepsilon}$, which implies $c(q,r)=1$.
	{Once we know $\lim_{y \to z-} W^{(q)}(y,z) / W^{(r)}(y,z) = 1 \ (q \leq r)$,
	we can also show $c(q,r) = 1 \ (q > r)$ by the same argument.}
	Hence, we obtain
	\begin{align}
	\begin{aligned}
		W^{(q)}(y,z) - W^{(r)}(y,z) = (q-r) W^{(q)} \otimes W^{(r)}&(y,z) \\ & (y \in (x,z], \ q,r \geq 0), 
		\end{aligned}
		\label{eq14}
	\end{align}
	{where the case $y = z$ follows from Remark \ref{rem:downwardRegularity}.}
	Considering the case of $r = 0$ in \eqref{eq14}, we can see that $W^{(q)}(x , \cdot)$ satisfies \eqref{IntegralEq} and the proof is complete by Proposition \ref{prop:defOfM}.
\end{proof}
From Proposition \ref{prop:defOfM} and Theorem \ref{thm:WequalM}, for every $x,y \in I$, we can analytically extend the function $[0,\infty) \ni q \mapsto W^{(q)}(x,y)$ to the entire function by \eqref{304}.
In the following, we always regard $W^{(q)}(x,y)$ as such an entire function. 

To show \eqref{eq78} for $q,r \in \bC$, we first check that the RHS of \eqref{eq78} is well-defined for $q, r\in \bC$ and is analytic for $q$ and $r$.

\begin{Prop} \label{prop:commutativity}
	For $q,r \in \bC$, the product $W^{(q)} \otimes W^{(r)}$ is well-defined and commutative:
	\begin{align}
		W^{(q)} \otimes W^{(r)} (x, y) = W^{(r)} \otimes W^{(q)}(x, y), \quad x,y \in I. \label{commutativity}
	\end{align}
	In addition, for $x, y \in I$, the functions $\bC \ni q \mapsto W^{(q)} \otimes W^{(r)}(x,y)$ and $\bC \ni r \mapsto W^{(q)} \otimes W^{(r)}(x,y)$ are analytic on $\bC$.
\end{Prop}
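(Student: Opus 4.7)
The plan is to reduce everything to Tonelli's theorem applied to the non-negative series representation of the scale functions furnished by Theorem \ref{thm:WequalM}, combined with the elementary bound \eqref{eq01}.

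First, I would set $F_x(y) := \int_{(x,y)} W(x,v)\, m(dv)$, which is finite by Proposition \ref{prop:integrabilityOfW}. From \eqref{eq03} together with Theorem \ref{thm:WequalM}, for $q \in \bC$ and $u \in (x,y)$ one has the pointwise majorant
\begin{align}
|W^{(q)}(x,u)| \leq \sum_{n \geq 0} |q|^n W^{\otimes (n+1)}(x,u) \leq W(x,u)\, \mathrm{e}^{|q| F_x(y)},
\end{align}
and analogously for $W^{(r)}(u,y)$. Using the associativity of $\otimes$, which gives $W^{\otimes(n+1)} \otimes W^{\otimes(m+1)} = W^{\otimes(n+m+2)}$, Tonelli's theorem applied to the non-negative double sum then yields
\begin{align}
\int_{(x,y)} \sum_{n,m \geq 0} |q|^n |r|^m W^{\otimes(n+1)}(x,u) W^{\otimes(m+1)}(u,y)\, m(du) = \sum_{n,m \geq 0} |q|^n |r|^m W^{\otimes(n+m+2)}(x,y).
\end{align}
By \eqref{eq01} and the crude bound $\sum_{n+m=k} |q|^n |r|^m \leq (k+1)\max(|q|,|r|)^k$, the right-hand side is dominated by $W(x,y)\, F_x(y)\, \mathrm{e}^{\max(|q|,|r|) F_x(y)} < \infty$. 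This establishes both well-definedness of $W^{(q)} \otimes W^{(r)}(x,y)$ and the absolute integrability of $W^{(q)}(x,\cdot)\, W^{(r)}(\cdot, y)$ against $m$.

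With absolute integrability in hand, I would invoke Fubini's theorem to interchange the signed double sum with the integral, obtaining
\begin{align}
W^{(q)} \otimes W^{(r)}(x,y) = \sum_{n,m \geq 0} q^n r^m W^{\otimes(n+m+2)}(x,y).
\end{align}
The right-hand side is manifestly symmetric in $(q,r)$ under the relabelling $n \leftrightarrow m$, so commutativity \eqref{commutativity} follows immediately.

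For analyticity in $q$ with $r$ fixed, I would regroup the double series by powers of $q$ as $\sum_{n \geq 0} q^n a_n(r)$ with $a_n(r) := \sum_{m \geq 0} r^m W^{\otimes(n+m+2)}(x,y)$. The same exponential majorant guarantees absolute convergence of the double series on every bounded set in $\bC^2$, so the rearrangement is valid and the resulting power series in $q$ has infinite radius of convergence, hence is entire; analyticity in $r$ follows by symmetry. The one step requiring genuine care is arranging a uniform dominating bound strong enough to justify both the Fubini interchange and the rearrangement of the double series at the same time, but the exponential majorant derived above handles both at once.
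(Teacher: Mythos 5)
Your proof is correct. The approach is the same in spirit as the paper's (series expansion of the scale functions plus Fubini/Tonelli), but you expand \emph{both} factors into their power series and work directly with the double sum $\sum_{n,m}q^nr^mW^{\otimes(n+m+2)}(x,y)$, whereas the paper expands only the first factor, bounds $|W^{(r)}(u,y)|$ by $W^{(|r|)}(u,y)$, and obtains the single-indexed identity $W^{(q)}\otimes W^{(r)}(x,y)=\sum_{n\geq 0}q^n\,W^{\otimes(n+1)}\otimes W^{(r)}(x,y)$. The payoff of your version is that commutativity becomes an immediate symmetry $n\leftrightarrow m$ in the double series, while the paper instead reduces to $W\otimes W^{(r)}=W^{(r)}\otimes W$ via associativity and the one-sided expansion. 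Your argument also yields a clean uniform-in-$(q,r)$ majorant $W(x,y)F_x(y)\,\mathrm{e}^{\max(|q|,|r|)F_x(y)}$ that simultaneously handles integrability, the Fubini interchange, and joint analyticity; the paper handles these slightly more sequentially. Both routes rely on the same essential ingredients (Theorem \ref{thm:WequalM}, the bound \eqref{eq01}, and the finiteness from Proposition \ref{prop:integrabilityOfW}), so this is best viewed as a cleaner presentation of the same idea rather than a different proof.
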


\begin{proof}
	Since it holds
	\begin{align}
	&\int_{(x, y)} |W^{(q)}(x, u)W^{(r)}(u ,y)|m(du) \label{}\\
		\leq &\sum_{n \geq 0} |q|^{n}\int_{(x,y)} W^{\otimes(n+1)}(x,u) |W^{(r)}(u,y)| m(du) \label{} \\
		\leq & \mathrm{e}^{|q|\int_{(x,y)}W(x,v) m(dv)} \int_{(x,y)} W(x,u)W^{(|r|)}(u,y) m(du) \label{} \\
		\leq & \mathrm{e}^{|q|\int_{(x,y)}W(x,v) m(dv)} W^{(|r|)}(x,y) \int_{(x,y)} W(x,u) m(du)<\infty, \label{} 
	\end{align}
	where in the first inequality we used %{\eqref{329} and} 
	\eqref{eq03} and in the last inequality we used \eqref{eq24},	
	the product $W^{(q)} \otimes W^{(r)}$ is well-defined and it holds
	\begin{align}
		W^{(q)} \otimes W^{(r)}(x,y) = \sum_{n \geq 0} q^{n}\int_{(x,y)} W^{\otimes(n+1)}(x,u) W^{(r)}(u,y) m(du), \label{eq12} 
	\end{align}
	which implies the function $\bC \ni q \mapsto W^{(q)} \otimes W^{(r)}{(x,y)}$ is an entire function.
	From \eqref{eq12} and the associativity of the product $\otimes$, in order to show \eqref{commutativity} it is enough to show $W \otimes W^{(r)} = W^{(r)} \otimes W$, and that is clear from \eqref{eq12} for $r = 0$.
\end{proof}

\begin{Rem} \label{rem:Zexpansion}
	%{[Added.]}
	The function $[0,\infty) \ni q \mapsto Z^{(q)}(x,y) \ (x,y \in I)$ defined in \eqref{scaleFuncZ} is also extended to an entire function.
	Indeed, from \eqref{eq03}, it holds
	\begin{align}
		\sum_{n \geq 0} |q|^{n}\int_{(x,y)}W^{\otimes (n+1)}(x,u)m(du) \leq \mathrm{e}^{|q|\int_{(x,y)}W(x,u)m(du)} \int_{(x,y)}W(x,u)m(du) < \infty. \label{}
	\end{align}
	{
	Thus, it holds
	\begin{align}
		Z^{(q)}(x,y) = 1 + \sum_{n \geq 0} q^{n+1}\int_{(x,y)}W^{\otimes(n+1)}(x,u)m(du) \quad (x,y \in I, \ q \geq 0). \label{}
	\end{align}
	As in $W^{(q)}$, hereafter we always consider $Z^{(q)}(x,y)$ to be an entire function of $q$.
	}
\end{Rem}

The scale functions satisfy the resolvent identity.

\begin{Prop} \label{prop:resolventIdentity}
	For $q,r \in \bC$ and $x, y \in I$, the equality \eqref{eq78} holds. 
\end{Prop}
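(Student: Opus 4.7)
The plan is to use analytic continuation from the real case established in Theorem \ref{thm:WequalM} (specifically equation \eqref{eq14}, which gives the identity for $q, r \geq 0$). Both sides of \eqref{eq78} are, in each variable separately, entire functions of a complex parameter, so equality on a set with an accumulation point forces equality everywhere.

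More explicitly, I would proceed as follows. First, observe that thanks to Theorem \ref{thm:WequalM} together with Proposition \ref{prop:defOfM}, for fixed $x, y \in I$ the map $q \mapsto W^{(q)}(x,y)$ extends to an entire function on $\bC$ given by the series \eqref{seriesExpansionOfM}. Similarly, Proposition \ref{prop:commutativity} ensures that for fixed $r \in \bC$ the map $q \mapsto W^{(q)} \otimes W^{(r)}(x,y)$ is entire, and by symmetry the same holds with the roles of $q$ and $r$ interchanged. Consequently both sides of \eqref{eq78} are entire functions of $q$ for each fixed $r \in \bC$, and entire functions of $r$ for each fixed $q \in \bC$.

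Next, fix any $r \geq 0$. By \eqref{eq14} (the case $q, r \geq 0$ of Theorem \ref{thm:WequalM}), the two sides of \eqref{eq78} coincide for all $q \in [0, \infty)$. Since $[0,\infty)$ has an accumulation point in $\bC$, the identity theorem for holomorphic functions implies that \eqref{eq78} holds for all $q \in \bC$ and all $r \geq 0$. Now fix any $q \in \bC$. We have just shown that the two sides of \eqref{eq78} agree as functions of $r$ on $[0,\infty)$; applying the identity theorem once more (this time in the variable $r$), we conclude that the equality holds for all $r \in \bC$. This gives \eqref{eq78} for all $q, r \in \bC$ and all $x, y \in I$.

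There is no real obstacle here: the entire groundwork—entireness in each variable, the real-parameter resolvent identity, and the absolute convergence needed to make $W^{(q)} \otimes W^{(r)}$ meaningful—has already been set up. The only thing to be careful about is invoking analyticity in \emph{one} variable at a time, rather than appealing to joint analyticity, which we have not formally established (although it would also follow from the same series bounds).
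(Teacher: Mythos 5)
Your argument is correct and matches the paper's own proof: both start from the real-parameter identity \eqref{eq14}, invoke the entireness in each variable (Proposition \ref{prop:commutativity} and the series expansion), and apply the identity theorem twice, once in $q$ with $r$ fixed and once in $r$.
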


\begin{proof}
	We have already shown the case $q,r \geq 0$ in \eqref{eq14}.
	Since $W^{(q)} \otimes W^{(r)}(x,y)$ is the entire function in $q \in \bC$ from Proposition \ref{prop:commutativity},
	we easily see the desired result by applying the identity theorem twice.
\end{proof}
Now we can show a limit behavior on the ratio of scale functions in a little stronger form than \eqref{eq11}.
\begin{Cor} \label{cor:ratioLimitScaleFunc}
	For $q,r \in \bC$ and $u \in I$, it holds
	\begin{align}
		\lim_{\substack{|x - y| \to 0, \\ x \leq u \leq y,\\ x < y}} \frac{W^{(q)}(x,y)}{W^{(r)}(x,y)} = 1. \label{}
	\end{align}
	In particular, for every fixed $q \in \bR$, it holds $W^{(q)}(x,y) > 0$ for $x,y \in I$ such that $x < y$ and $y-x$ is sufficiently small.
\end{Cor}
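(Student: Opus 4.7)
The approach is to use the resolvent identity (Proposition \ref{prop:resolventIdentity}) to reduce the claim to a perturbative estimate. Writing
$$\frac{W^{(q)}(x,y)}{W^{(r)}(x,y)} - 1 = (q-r) \frac{W^{(q)} \otimes W^{(r)}(x,y)}{W^{(r)}(x,y)},$$
it suffices to show that the right-hand side vanishes in the stated limit.

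Set $F_{x,y} := \int_{(x,y)} W(x,v) m(dv)$. The exponential bound \eqref{eq03} gives $|W^{(q)}(x,s)| \leq W(x,s) e^{|q| F_{x,s}} \leq W(x,s) e^{|q| F_{x,y}}$ for $s \in (x,y)$, and, using the monotonicity in the first argument recorded in the remark just after Proposition \ref{prop:defOfM} to dominate $F_{s,y}$ by $F_{x,y}$, I likewise get $|W^{(r)}(s,y)| \leq W(s,y) e^{|r| F_{x,y}}$. Integrating the product and invoking $W^{\otimes 2}(x,y) \leq W(x,y) F_{x,y}$ from \eqref{eq01} yields
$$|W^{(q)} \otimes W^{(r)}(x,y)| \leq e^{(|q|+|r|) F_{x,y}}\, W(x,y)\, F_{x,y}.$$
The same series estimate applied to $W^{(r)}(x,y) - W(x,y) = \sum_{n\geq 1} r^n W^{\otimes(n+1)}(x,y)$ gives $|W^{(r)}(x,y)/W(x,y) - 1| \leq e^{|r| F_{x,y}} - 1$, so once I know $F_{x,y} \to 0$, the ratio $W(x,y)/|W^{(r)}(x,y)|$ tends to $1$, and the displayed estimate forces the right-hand side of the first equation to tend to $0$.

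The crux is therefore to establish $F_{x,y} \to 0$ as $|x-y| \to 0$ with $x \leq u \leq y$. Pick $x_0, y_0 \in I$ with $x_0 \leq u \leq y_0$ and $x_0 < y_0$ (with the obvious modifications when $u$ is an endpoint). For $x \in [x_0, u]$ and $y \in [u, y_0]$, the non-increasingness of $W$ in the first argument gives $W(x,u') 1_{(x,y)}(u') \leq W(x_0, u') 1_{(x_0, y_0)}(u')$, and the dominating function is $m$-integrable by Proposition \ref{prop:integrabilityOfW}. The pointwise limit of the integrand is concentrated at $u' = u$, and dominated convergence together with Remark \ref{rem:W(x,x)m(x)=0} (giving $W(u,u) m\{u\} = 0$ when $u \neq \ell_1$) yields $F_{x,y} \to 0$; the boundary case $u = \ell_1$ reduces to continuity of the finite measure $W(\ell_1, \cdot)\, m(d\cdot)$ at $\ell_1$. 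For the final positivity assertion, take $r = 0$: since $W^{(q)}(x,y)$ is real for real $q$ and $W(x,y) > 0$ by Proposition \ref{Prop202}, the ratio $W^{(q)}(x,y)/W(x,y)$ being close to $1$ forces $W^{(q)}(x,y) > 0$ when $y - x$ is small. I expect the main obstacle to be handling the simultaneous limit in $F_{x,y}$ with both endpoints varying; the dominated convergence setup above is the cleanest way to treat it uniformly.
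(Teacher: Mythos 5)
Your proof is correct, and it hinges on the same core mechanism as the paper's: the exponential series bound \eqref{eq03} together with dominated convergence to drive $F_{x,y}=\int_{(x,y)}W(x,v)\,m(dv)$ to zero, with Remark~\ref{rem:W(x,x)m(x)=0} killing the possible point mass at $u$. Where you diverge is in the reduction: the paper first argues it suffices to treat $r=0$ (via $W^{(q)}/W^{(r)}=(W^{(q)}/W)\cdot(W/W^{(r)})$) and then reads the bound
$|W^{(q)}(x,y)/W(x,y)-1|\leq \mathrm{e}^{|q|\int_{(x,y)}W(x_0,v)m(dv)}-1$
directly off the series expansion $W^{(q)}=\sum_n q^nW^{\otimes(n+1)}$ using the non-increasingness of $W(\cdot,v)$, whereas you go through the resolvent identity of Proposition~\ref{prop:resolventIdentity} to write $W^{(q)}/W^{(r)}-1=(q-r)\,W^{(q)}\otimes W^{(r)}/W^{(r)}$ and bound numerator and denominator separately. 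Your route is a bit longer and needs the extra observations that $F_{s,y}\leq F_{x,y}$ for $x\leq s$ and that $W^{(r)}(x,y)\neq 0$ once $F_{x,y}$ is small, but it treats $q,r$ symmetrically and arguably makes more systematic use of the resolvent identity just established; both proofs ultimately rest on the identical DCT step. Two minor notes: the non-increasingness remark you cite appears just before Proposition~\ref{prop:integrabilityOfW}, not after Proposition~\ref{prop:defOfM}; and strictly speaking the pointwise evaluation of the DCT limit at $u'=u$ uses $\lim_{x\to u-}W(x,u)=W(u,u)$, a diagonal continuity not literally contained in Proposition~\ref{prop:continuityOfW} (which concerns $I_{<y}$), but the paper's own proof of \eqref{eq17} makes the same implicit use of it, so you are on equal footing there.
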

\begin{proof}
	It is enough to show the case $r = 0$.
	Take $u \in I$. 
	When $u = \ell_{1}$, set $x_{0} := \ell_{1}$ and when $u > \ell_{1}$, take $x_{0} \in (\ell_{1},u)$ arbitrarily.
	From Theorem \ref{thm:WequalM} and \eqref{eq03}, it holds for $x \geq x_{0}$
	\begin{align}
		\left| \frac{W^{(q)}(x,y)}{W(x,y)} - 1 \right|
		=&\absol{ \sum_{n \geq 1} q^{n}\frac{W^{\otimes (n+1)}(x,y)}{W(x, y)}}
		\leq \sum_{n \geq 1} {|q|}^{n}\frac{W^{\otimes (n+1)}(x,y)}{W(x, y)} \\
		=&\frac{1}{W(x, y)} \sum_{n \geq 0} {|q|}^{n}{W^{\otimes (n+1)}(x,y)} -1\\
			\leq&{\mathrm{e}^{|q|\int_{(x,y)}W(x_{0},v)m(dv)} -1}. \label{}
	\end{align}
	Thus, we have the desired result by the dominated convergence theorem.
\end{proof}

A similar characterization of $Z^{(q)}$ by a Volterra integral equation and the series expansion holds.
\begin{Thm} \label{thm:characterizationOfZ}
	For $q \in \bC$ and $x, y \in I$, it holds
	\begin{align}
		\sum_{n \geq 2}|q|^{n} W^{\otimes (n-1)} \otimes \bar{W}(x,y) < \infty  \label{}
	\end{align}
	and
	\begin{align}
		Z^{(q)}(x,y) = 1 + q\bar{W}(x,y) + \sum_{n \geq 2}q^{n} W^{\otimes (n-1)} \otimes \bar{W}(x,y),  \label{eq59}
	\end{align}
	where
	\begin{align}
		\bar{W}(x,y) := \int_{(x,y)}W(x,u)m(du). \label{}
	\end{align}
	%{[We should move it to above.]}
	In addition, it holds
	\begin{align}
		|Z^{(q)}(x,y)| \leq 1 + |q|\bar{W}(x,y) \mathrm{e}^{|q|\bar{W}(x,y)}. \label{eq81}
	\end{align}
	Moreover, for $x,z \in I$ with $x < z$ the function $f = Z^{(q)}(\cdot,z)$ is characterized as a unique solution of a Volterra integral equation in the following sense: for a function $f:(x,z) \to \bR$ and a $m$-null set $N$, if it holds for every $y \in [x,z) \setminus N$
	\begin{align}
		f(y) = 1 + q \int_{(y,z)}W(y,u)f(u) m(du) \label{eq60}
	\end{align}
	and 
	\begin{align}
		\int_{(y,z)} W(y,u)|f(u)| m(dy) < \infty, \label{}
	\end{align}
	then $f(y) = Z^{(q)}(y,z) \ (y \in [x,z) \setminus N)$.
\end{Thm}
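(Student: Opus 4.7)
The plan is to reduce everything to facts already proved for $W^{(q)}$, namely the series expansion in Theorem \ref{thm:WequalM}, the bound on $W^{\otimes n}$ in Proposition \ref{prop:defOfM}, and the resolvent identity in Proposition \ref{prop:resolventIdentity}. I would handle the four assertions (series, bound, existence, uniqueness) in this order.

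First I would prove the series expansion \eqref{eq59}. Start from the definition $Z^{(q)}(x,y)=1+q\int_{(x,y)}W^{(q)}(x,u)\,m(du)$ and substitute $W^{(q)}(x,u)=\sum_{n\geq 0}q^{n}W^{\otimes(n+1)}(x,u)$. The majorization \eqref{eq03} together with the finiteness $\bar{W}(x,y)<\infty$ from Proposition \ref{prop:integrabilityOfW} lets me apply Fubini/Tonelli to exchange sum and integral. A one-line Fubini computation then gives, for $n\geq 1$,
\begin{align}
\int_{(x,y)}W^{\otimes(n+1)}(x,u)\,m(du)=\int_{(x,y)}W^{\otimes n}(x,v)\int_{(v,y)}W(v,u)\,m(du)\,m(dv)=W^{\otimes n}\otimes\bar{W}(x,y),
\end{align}
while the $n=0$ term contributes $\bar{W}(x,y)$. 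This yields \eqref{eq59}, and as a byproduct the absolute convergence of the series in $|q|$.

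Second, for the bound \eqref{eq81}, Proposition \ref{prop:defOfM} gives $W^{\otimes n}(x,u)\leq W(x,u)\bar{W}(x,u)^{n-1}/(n-1)!$; integrating this against $m(du)$ over $(x,y)$ and using $\bar{W}(x,u)\leq\bar{W}(x,y)$ gives $\int_{(x,y)}W^{\otimes n}(x,u)\,m(du)\leq \bar{W}(x,y)^{n}/(n-1)!$. Summing $\sum_{n\geq 1}|q|^{n}\bar{W}(x,y)^{n}/(n-1)!=|q|\bar{W}(x,y)\mathrm{e}^{|q|\bar{W}(x,y)}$ yields the claim.

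Third, for the Volterra characterization I would verify existence and uniqueness separately. For existence, starting from the right-hand side,
\begin{align}
1+q\int_{(y,z)}W(y,u)Z^{(q)}(u,z)\,m(du)=1+q\bar{W}(y,z)+q^{2}\int_{(y,z)}W\otimes W^{(q)}(y,v)\,m(dv)
\end{align}
by Fubini, and the resolvent identity of Proposition \ref{prop:resolventIdentity} with $r=0$ plus commutativity (Proposition \ref{prop:commutativity}) gives $qW\otimes W^{(q)}(y,v)=W^{(q)}(y,v)-W(y,v)$. Substituting collapses the expression to $1+q\int_{(y,z)}W^{(q)}(y,v)\,m(dv)=Z^{(q)}(y,z)$.

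Finally, for uniqueness I would mimic the argument in Proposition \ref{prop:defOfM}. Given a solution $f$ on $(x,z)\setminus N$, set $h:=f-Z^{(q)}(\cdot,z)$, which satisfies the homogeneous equation $h(y)=q\int_{(y,z)}W(y,u)h(u)\,m(du)$ off $N$. Since $N$ is $m$-null, Fubini and induction give
\begin{align}
h(y)=q^{n}\int_{(y,z)}W^{\otimes n}(y,u)h(u)\,m(du)\qquad(y\in[x,z)\setminus N,\ n\geq 1),
\end{align}
and the bound \eqref{eq01} from Proposition \ref{prop:defOfM} yields $|h(y)|\leq |q|^{n}\bar{W}(y,z)^{n-1}/(n-1)!\cdot\int_{(y,z)}W(y,u)|f(u)-Z^{(q)}(u,z)|\,m(du)\to 0$ as $n\to\infty$, where the integral is finite by the hypothesis on $f$ combined with \eqref{eq81}. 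The main (mild) obstacle is bookkeeping the $m$-null set $N$ through the iteration, but since $N$ is $m$-null the iterated integrals are unaffected and the inductive step goes through unchanged.
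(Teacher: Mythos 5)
Your proof is correct and takes essentially the same approach as the paper. The paper's own proof is very terse: it cites Remark \ref{rem:Zexpansion} for \eqref{eq81} and the series, records the Fubini identity $W^{\otimes n}\otimes\bar W=\int W^{\otimes(n+1)}\,dm$, and then says uniqueness follows by the same iteration argument as Proposition \ref{prop:defOfM} and Corollary \ref{Cor404} — exactly the steps you spell out. The one genuine (minor) variation is that you verify that $Z^{(q)}(\cdot,z)$ solves the Volterra equation via the resolvent identity (Proposition \ref{prop:resolventIdentity} plus commutativity), whereas the argument the paper is gesturing at would plug the series \eqref{eq59} directly into the equation and reindex, exactly as in the verification that $M^{(q)}$ solves \eqref{IntegralEq}; both routes are short and equally valid, and your handling of the $m$-null set $N$ and of the finiteness hypotheses is the intended one.
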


\begin{proof}
	The inequality \eqref{eq81} has already shown in Remark \ref{rem:Zexpansion}.
	Note that for $x,y \in I$ with $x < y$ and $n \geq 1$ it holds form Fubini's theorem
	\begin{align}
		W^{\otimes n} \otimes \bar{W}(x,y) = \int_{(x,y)}W^{\otimes (n+1)}(x,u)m(du). \label{eq61}
 	\end{align}
	{From Remark \ref{rem:Zexpansion}, we have \eqref{eq59}.}
	By the same argument in Proposition \ref{prop:defOfM} and Corollary \ref{Cor404}, we easily see the desired uniqueness.
\end{proof}

In the following corollary, we give an identity for $Z^{(q)}$ which corresponds to \eqref{eq78}.}
\begin{Cor} \label{cor:resolventIdentityForZ}
	For $q,r \in \bC$ and $x,y \in I$ with $x < y$ it holds
		\begin{align}
		Z^{(q)}(x,y) - Z^{(r)}(x,y) = (q-r) W^{(q)} \otimes Z^{(r)}(x,y)=(q-r) W^{(r)} \otimes Z^{(q)}(x,y). \label{eq62}
	\end{align}
\end{Cor}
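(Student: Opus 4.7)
My plan is to reduce the corollary for $Z^{(q)}$ directly to the resolvent identity \eqref{eq78} for $W^{(q)}$ proved in Proposition \ref{prop:resolventIdentity}, by inserting the defining formula $Z^{(r)}(u,y) = 1 + r\int_{(u,y)}W^{(r)}(u,v)m(dv)$ into the convolution $W^{(q)}\otimes Z^{(r)}(x,y)$. The identity for $Z$ ought to emerge by purely algebraic manipulation once this substitution is made.

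The first step is computational: after substitution, $W^{(q)}\otimes Z^{(r)}(x,y)$ splits into the single integral $\int_{(x,y)}W^{(q)}(x,u)m(du)$ plus the iterated integral $r\int_{(x,y)}W^{(q)}(x,u)\int_{(u,y)}W^{(r)}(u,v)m(dv)\,m(du)$. Applying Fubini's theorem to exchange the order of integration in the latter rewrites it as $r\int_{(x,y)}W^{(q)}\otimes W^{(r)}(x,v)\,m(dv)$. Multiplying through by $(q-r)$ and using the resolvent identity \eqref{eq78} to substitute $(q-r)W^{(q)}\otimes W^{(r)}(x,v) = W^{(q)}(x,v) - W^{(r)}(x,v)$, the expression collapses to $q\int_{(x,y)}W^{(q)}(x,u)m(du) - r\int_{(x,y)}W^{(r)}(x,v)m(dv)$, which is exactly $Z^{(q)}(x,y) - Z^{(r)}(x,y)$ by the definition \eqref{scaleFuncZ}.

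The second equality in the statement then follows immediately: interchanging $q$ and $r$ in the identity just derived and multiplying by $-1$ yields $(q-r)W^{(r)}\otimes Z^{(q)}(x,y) = Z^{(q)}(x,y) - Z^{(r)}(x,y)$, so both expressions equal the same quantity. Alternatively, one can invoke the commutativity $W^{(q)}\otimes W^{(r)} = W^{(r)}\otimes W^{(q)}$ from Proposition \ref{prop:commutativity} at the Fubini step to obtain both forms in parallel.

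The only technical point is justifying Fubini for complex $q, r$, but this is not deep. Using the series bound \eqref{eq03} we have $|W^{(r)}(u,v)| \leq W^{(|r|)}(u,v)$, and the first-slot monotonicity of $W^{\otimes n}$ recorded in the remark after Proposition \ref{prop:defOfM} gives $W^{(|r|)}(u,v) \leq W^{(|r|)}(x,v)$ for $x \leq u$. The absolute triple integral is then dominated by $\bigl(\int_{(x,y)}W^{(|q|)}(x,u)m(du)\bigr)\bigl(\int_{(x,y)}W^{(|r|)}(x,v)m(dv)\bigr)$, which is finite by Proposition \ref{prop:integrabilityOfW}. The substantive analytic content is already packaged in the resolvent identity for $W$; the passage to $Z$ is essentially bookkeeping.
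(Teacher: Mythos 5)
Your proof is correct, and it takes a mildly different route from the paper's that is in fact a bit cleaner. The paper proves \eqref{eq62} by integrating both sides of the $W$-resolvent identity \eqref{eq78} over $(x,y)$ in the dummy variable and then rearranging; because that manipulation divides by $q$ and $r$, the paper must first dispose of the cases $q=0$ and $r=0$ separately. You instead expand $W^{(q)}\otimes Z^{(r)}(x,y)$ directly by substituting $Z^{(r)}(u,y)=1+r\int_{(u,y)}W^{(r)}(u,v)\,m(dv)$, apply Fubini, and then invoke \eqref{eq78} to collapse the iterated integral. Since no division by $q$ or $r$ occurs, the argument runs uniformly for all $q,r\in\bC$ with no case analysis. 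The substantive ingredients are identical in both versions: the $W$-resolvent identity, Fubini justified via $|W^{(r)}(u,v)|\le W^{(|r|)}(u,v)\le W^{(|r|)}(x,v)$ (first-slot monotonicity) together with the integrability from Proposition \ref{prop:integrabilityOfW}, the validity of the formula $Z^{(q)}(x,y)=1+q\int_{(x,y)}W^{(q)}(x,u)\,m(du)$ for complex $q$ (which follows from the series expansion and bound in Remark \ref{rem:Zexpansion}), and the $q\leftrightarrow r$ swap for the second equality. So the two proofs are morally the same; yours simply avoids the artificial $q,r\neq 0$ restriction.
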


\begin{proof}
	When $q = 0$, the equality \eqref{eq62} follows from Theorem \ref{thm:characterizationOfZ} and
	the case $r=0$ is obvious from the definition of $Z^{(r)}$.
	Let $r,q \neq 0$.
	By integrating both sides of \eqref{eq78} in $y$, we have for $x < y$
	\begin{align}
		&\frac{Z^{(q)}(x,y)-1}{q} - \frac{Z^{(r)}(x,y)-1}{r} \label{} \\
		=&\int_{(x, y)} \rbra{W^{(q)}(x,u) - W^{(r)}(x,u)} m(du) \label{}\\
		= &(q-r)\int_{(x,y)}m(du)\int_{(x,u)}W^{(q)}(x,v)W^{(r)}(v,u)m(dv) \label{} \\
		=& (q-r)\int_{(x,y)}W^{(q)}(x,v)m(dv)\int_{(v,y)}W^{(r)}(v,u)m(du) \label{} \\
		= & \frac{q-r}{r}W^{(q)}\otimes Z^{(r)}(x,y) - \frac{q-r}{qr}(Z^{(q)}(x,y) - 1), \label{}
	\end{align}
	{where in the third equality changing the order of integration is justified since
	\begin{align}
		&\int_{(x,y)}m(du)\int_{(x,u)}|W^{(q)}(x,v)W^{(r)}(v,u)|m(dv) \label{} \\
		\leq &\left(\int_{(x,y)}W^{(|r|)}(x,u)m(du)\right) \left(\int_{(x,y)}W^{(|q|)}(x,v)m(dv)\right) < \infty \label{}
	\end{align}
	by Proposition \ref{prop:integrabilityOfW}.
	}
	%{where in the second equality, we used the Fubini's theorem with the fact that 
	%\begin{align}
	%&\int_{(x,y)}m(du)\int_{(x,u)}\absol{W^{(q)}(x,v)W^{(r)}(v,u)}m(dv)\label{}\\
	%&\leq \int_{(x,y)}m(du)\int_{(x,u)}W^{(q^\prime)}(x,v)W^{(r^\prime)}(v,u)m(dv)\label{}\\
	%&=\frac{1}{(q^\prime- r^\prime)}\rbra{\frac{Z^{(q^\prime)}(x,y)-1}{q^\prime} - \frac{Z^{(r^\prime)}(x,y)-1}{r^\prime}}
	%<\infty, \label{}
	%\end{align}
	%for $q^\prime, r^\prime >0$ with $q^\prime>r^\prime$, $q^\prime >|q|$ and $r^\prime >|r|$. 
	%}
	Rearranging gives the first equality of \eqref{eq62}. 
	By exchanging $q$ and $r$ in the first equality of \eqref{eq62}, we see that the first term and the last term in \eqref{eq62} are equal.
\end{proof}

\section{Some results on the positivity of the scale functions} \label{section:positivity}
%{[I guess the following fact should not put in the section of scale functions.]}
{
	In this section, we investigate positivity of the scale function $W^{(q)}(x,y)$ for negative $q$.
}
Define for $x \in I_{<\ell_{2}}$ and $y \in I$ with $x < y$
{
\begin{align}
	\lambda^{[x,y]}_{0} = \sup \{ \lambda \geq 0 \mid \bE_{u}[ \mathrm{e}^{\lambda \tau_{x}^{-}}, \tau_{x}^{-} < \tau_{y}^{+}] < \infty \ \text{for every $u \in  (x,y)$} \} \in [0,\infty]. \label{}
\end{align}
}
{
It is clear that the function $I_{>x} \ni y \mapsto \lambda^{{[x,y]}}_{0}$ is non-increasing.
}
%{[It looks that $\lambda^{[x,y]}_{0}$ does not depend on $y$. %And I guess we should mention that 
%\begin{align}
%	\lambda^{[x,y]}_{0} = \sup \{ \lambda \geq 0 \mid \bE_{u}[ \mathrm{e}^{\lambda \tau_{x}^{-}}, \tau_{x}^{-} < \tau_{y}^{+}] < \infty \ \text{for some $u \in  (x,y)$} \} \in [0,\infty] \label{}
%\end{align}
%]}

\begin{Prop} \label{prop:positivityOfW}
	%{[$\lambda_{0}^{[x]}$ is replaced by $\lambda^{[x,z]}_{0}$. The assertion (i) is added.]}
	Let $x,z \in I$ with $x < z$.
	Then the following holds:
	\begin{enumerate}
		\item The function $I_{<z} \ni u \mapsto \lambda_{0}^{[u,z]}$ is non-decreasing. 
		\item For $q \in (-\lambda_{0}^{[x,z]},\infty)$ and $u,y \in [x,z]$ with $u < y$, it holds $W^{(q)}(u,y) > 0$.
		\item For $u,y \in [x,z]$ with $u < y$, the function $[-\lambda_{0}^{[x,z]},\infty) \cap \bR \ni q \mapsto W^{(q)}(u,y)$ is strictly increasing. 
	\end{enumerate}
\end{Prop}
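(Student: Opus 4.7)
I will prove the three parts in order. Part (i) comes from a strong Markov factorization at $\tau_{u_2}^-$; part (ii) is obtained by extending the exit-time formula \eqref{eq56} from $q \geq 0$ to the half-plane $\{\Re q > -\lambda_0^{[x,z]}\}$ via the identity theorem, and then ruling out zeros of $W^{(q)}(x,z)$ there using Corollary \ref{cor:ratioLimitScaleFunc}; part (iii) is immediate from (ii) and the resolvent identity \eqref{eq78}.

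For \textbf{(i)}, fix $u_1 < u_2$ in $I_{<z}$, $\lambda \in [0, \lambda_0^{[u_1, z]})$, and $v \in (u_2, z)$. Because $X$ has no negative jumps, $X_{\tau_{u_2}^-} = u_2$ on $\{\tau_{u_2}^- < \infty\}$ and the event $\{\tau_{u_1}^- < \tau_z^+\}$ automatically forces $\tau_{u_2}^- < \tau_z^+$ together with $\tau_{u_1}^- = \tau_{u_2}^- + \tau_{u_1}^- \circ \theta_{\tau_{u_2}^-}$. The strong Markov property at $\tau_{u_2}^-$ then yields
\begin{align}
\bE_v[\mathrm{e}^{\lambda \tau_{u_1}^-}, \tau_{u_1}^- < \tau_z^+] = \bE_v[\mathrm{e}^{\lambda \tau_{u_2}^-}, \tau_{u_2}^- < \tau_z^+] \cdot \bE_{u_2}[\mathrm{e}^{\lambda \tau_{u_1}^-}, \tau_{u_1}^- < \tau_z^+].
\end{align}
The LHS is finite by the choice of $\lambda$; the last factor on the RHS is finite (applying the definition of $\lambda_0^{[u_1,z]}$ at the admissible point $v = u_2$) and strictly positive since $\bP_{u_2}[\tau_{u_1}^- < \tau_z^+] = W(u_2, z)/W(u_1, z) > 0$ by Propositions \ref{prop:exitProblem1} and \ref{Prop202}. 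Thus the middle factor is finite for every $v \in (u_2, z)$, giving $\lambda \leq \lambda_0^{[u_2, z]}$, and the conclusion follows by taking $\lambda \uparrow \lambda_0^{[u_1,z]}$.

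For \textbf{(ii)}, fix real $q > -\lambda_0^{[x,z]}$ and set $h(q, u) := \bE_u[\mathrm{e}^{-q\tau_x^-}, \tau_x^- < \tau_z^+]$. Dominated convergence shows $q \mapsto h(q, u)$ is analytic on the half-plane $\{\Re q > -\lambda_0^{[x,z]}\}$, and the rewrite $W^{(q)}(u, z) = h(q, u)\, W^{(q)}(x, z)$ of \eqref{eq56} (valid for $q \geq 0$) extends to this half-plane by the identity theorem, since $W^{(q)}(\cdot,\cdot)$ is entire by Theorem \ref{thm:WequalM}. For real $q$ in this range, $h(q, u) > 0$; hence if $W^{(q)}(x, z) \leq 0$, the values $W^{(q)}(u, z)$ would share this sign for every $u \in (x, z)$, contradicting Corollary \ref{cor:ratioLimitScaleFunc}, which forces $W^{(q)}(u, z) > 0$ for $u$ sufficiently close to $z$. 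Thus $W^{(q)}(x, z) > 0$, and then $W^{(q)}(u, z) > 0$ on all of $[x, z)$. For arbitrary $u, y \in [x, z]$ with $u < y$, the same argument applied on $[u, y]$ gives $W^{(q)}(u, y) > 0$; by (i) combined with the monotonicity of $\lambda_0^{[x, y]}$ in $y$ noted just before the proposition, $\lambda_0^{[u, y]} \geq \lambda_0^{[x, z]}$, so $q$ remains in the admissible range.

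For \textbf{(iii)}, let $r < q$ lie in $[-\lambda_0^{[x, z]}, \infty)$ and $u, y \in [x, z]$ with $u < y$. The resolvent identity \eqref{eq78} reads
\begin{align}
W^{(q)}(u, y) - W^{(r)}(u, y) = (q - r) \int_{(u, y)} W^{(q)}(u, v)\, W^{(r)}(v, y)\, m(dv).
\end{align}
By (ii), $W^{(q)}(u, v) > 0$ for $v \in (u, y)$. If $r > -\lambda_0^{[x, z]}$, then also $W^{(r)}(v, y) > 0$ and strict positivity of the integral is immediate. If $r = -\lambda_0^{[x, z]}$, then $W^{(r)}(v, y) \geq 0$ follows by continuity of the entire function $q \mapsto W^{(q)}(v, y)$ approximated from above, and Corollary \ref{cor:ratioLimitScaleFunc} gives $W^{(r)}(v, y) > 0$ on a left-neighborhood of $y$. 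Since $m$ charges every non-empty open subinterval by Proposition \ref{Prop101}, the integral is strictly positive. The main subtlety is in (ii): one must analytically continue \eqref{eq56} through a region in which $W^{(q)}(x, z)$ could a priori vanish, and Corollary \ref{cor:ratioLimitScaleFunc} serves as the anchor near $z$ that forbids any such zero.
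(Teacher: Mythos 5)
Your proof is correct and follows essentially the same route as the paper: part (i) by the strong Markov factorization at the intermediate passage time, part (ii) by analytically continuing the exit-time identity \eqref{eq56} to $\Re q > -\lambda_0^{[x,z]}$ and using Corollary \ref{cor:ratioLimitScaleFunc} as the anchor near the upper endpoint (the authors phrase it as a contradiction from a hypothetical zero $W^{(q')}(u,y)=0$, you phrase it as ruling out $W^{(q)}(x,z)\leq 0$, but the content is identical), and part (iii) by the resolvent identity together with (ii). The only minor variation is at the endpoint $r=-\lambda_0^{[x,z]}$ in (iii): the paper deduces strict inequality by letting $q\downarrow -\lambda_0^{[x,z]}$ through the open case, while you argue directly that the integrand is nonnegative and strictly positive on a left-neighborhood of $y$, then invoke Proposition \ref{Prop101} to get positivity of the $m$-integral; both work.
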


\begin{proof}
	(i)
	For $u, v, y \in I_{<z}$ with $u < v < y$, 
	For $\lambda \in \bR $ from the strong Markov property at $\tau^-_v$,
	\begin{align}
	\bE_y\sbra{e^{\lambda \tau^-_u}, \tau^-_u <\tau^+_z}=&
	\bE_y\sbra{e^{\lambda \tau^-_v} \bE_{X_{\tau^-_v}}\sbra{ e^{\lambda \tau^-_u}, \tau^-_u< \tau^+_z}, \tau^-_v <\tau^+_z}\\
	=&\bE_y\sbra{e^{\lambda \tau^-_v}, \tau^-_v <\tau^+_z} \bE_{v}\sbra{ e^{\lambda \tau^-_u}, \tau^-_u< \tau^+_z}
	\end{align}
	Thus, if $\lambda<\lambda_0^{[u, z]}$, then $\bE_y\sbra{e^{\lambda \tau^-_u}, \tau^-_u <\tau^+_z}\in(0, \infty)$ and $\bE_v\sbra{e^{\lambda \tau^-_u}, \tau^-_u <\tau^+_z}\in(0, \infty)$.
	Thus, we have $\bE_y\sbra{e^{\lambda \tau^-_v}, \tau^-_v <\tau^+_z}$ for $y\in (v, z)$ and $\lambda<\lambda_0^{[v, z]}$. 
	Therefore, it holds $\lambda_0^{[u, z]}\leq \lambda_0^{[v, z]}$. 

	(ii)
	The case $q\geq 0$ is clear from Proposition \ref{Prop202}.
	Suppose $\lambda^{[x,z]}_{0} > 0$ and $W^{(q')}(u,y) = 0$ for some $q' \in (-\lambda^{[x,z]}_{0}, 0)$ and $u,y \in [x,z]$ with $u < y$.
	Proposition \ref{prop:exitProblem1} gives
	\begin{align}
		W^{(q)}(u,y) \bE_{v}[\mathrm{e}^{-q \tau_{u}^{-}}, \tau_{u}^{-} < \tau_{y}^{+}]
		= W^{(q)}(v,y) \quad (q \geq 0, \ v \in (u,y)).\label{507}
	\end{align}
	This identity can be analytically extended to $q \in\bC$ with $\Re q > -\lambda^{[x,z]}_{0} \ (\geq -\lambda^{[u,y]}_{0})$, and it follows $W^{(q')}(v,y) = 0$ for every $v \in (u,y)$.
	It is, however, impossible since it holds from {Corollary \ref{cor:ratioLimitScaleFunc}} %Remark \ref{rem:c(q,r)} 
	that for a sufficiently small $\delta > 0$
	\begin{align}
		W^{(q')}(v,y) > \frac{W(v,y)}{2} > 0 \quad (v \in (y-\delta,y)). \label{}
	\end{align}

	(iii)
	From Proposition \ref{prop:resolventIdentity} and (ii), we have for $r > q > -\lambda^{[x,z]}_{0}$
	\begin{align}
		W^{(r)}(u,y) - W^{(q)}(u,y) = (r-q) \int_{(u,y)}W^{(q)}(u,v)W^{(r)}(v,y)m(dv) > 0. \label{}
	\end{align}
	From the continuity of the map $q \mapsto W^{(q)}(u,y)$,
	we see $W^{(r)}(u,y) > W^{(-\lambda^{[x,z]}_{0})}(u,y)$.
\end{proof}

\begin{Rem}\label{Rem502}
	Proposition \ref{prop:positivityOfW} and the continuity $q \mapsto W^{(q)}(u,y)$ implies
	$W^{(-\lambda^{[x,z]}_{0})}(u,y) \geq 0 \ (u,y \in [x,z])$.
\end{Rem}

\section{Existence of quasi-stationary distributions: Inaccessible case} \label{section:QSD}

%{\color{blue} [I changed the name of this section from "One-side exit case"]}

In this section, we assume $\ell_{1} \in I$.
Then the interval $I$ can be identified with $[0,\ell] \ (\ell \in (0,\infty))$ or $[0,\ell) \ (\ell \in (0,\infty])$ by an appropriate order-preserving homeomorphism.
Thus, we may assume without loss of generality that $I$ is given by that form.
Our purpose in this section is to investigate the existence of a QSD of $X$ killed at $0$, that is, a probability distribution $\nu$ on $I$ such that
\begin{align}
	\bP_{\nu}[ X_{t} \in dy \mid \tau_{0} > t] = \nu(dy) \quad \text{for every $t \geq 0$}. \label{eq501}
\end{align}
Here we concentrate on the {case where the boundary $\ell$ is inaccessible}:
\begin{align}
	\lim_{z \to \ell} \bP_{x}[\tau_{z}^{+} < \tau_{0}] = 0 \quad (x \in I_{< \ell}).  \label{inaccessible}
\end{align}
The {accessible case i.e., $\ell \in I$ and $\bP_{x}[\tau_{\ell} < \tau_{0}] > 0 \ (x \in (0,\ell))$ will be discussed in Appendix \ref{appendix:twoSideExit}.}
Note that {when \eqref{inaccessible} holds and} $\ell \in I$, it follows from {Remark \ref{rem:W(x,x)m(x)=0}} that $\bP_{\ell}[\tau_{\ell} < \tau_{0}] = 0$ and thus $\ell$ is {irregular} and thus $m(\{\ell \}) = 0$ {again} from Remark \ref{rem:W(x,x)m(x)=0}.

{In order to state our main results, we prepare several conditions and definitions.}
Define for $x,y \in I$ with $x < y$
\begin{align}
	\lambda_{0}^{[x]}(y) := \sup \{ \lambda \geq 0 \mid \bE_{y}[\mathrm{e}^{\lambda \tau_{x}^{-}}] < \infty \}. \label{eq503a}
\end{align}
Note that it obviously holds $\lambda_{0}^{[x]}(y) \leq \lambda^{[x,y]}_{0}$.
For $x \in I_{< \ell}$, we introduce an irreducibility condition of the process $X_{\cdot \wedge \tau_{x}}$ starting from $y \in I_{>x}$:
\begin{align}
	\mathrm{(I)}_{x}: \quad \bP_{y}[\tau_{z} < \tau_{x}] > 0 \quad \text{for every $y \in I_{> x}$ and $z \in (x,\ell)$}. 
	%{[y, z \in (x, \ell) \text{ with }y<z \text{ is better?}]}
	\label{irreducibility}
\end{align}
In this section, we always assume $\mathrm{(I)}_{0}$ holds, but not necessarily for $\mathrm{(I)}_{x}$ for $x \in I_{>0}$.
If $\mathrm{(I)}_{x}$ holds, we easily see from the Markov property that $\lambda_{0}^{[x]} = \lambda_{0}^{[x]}(y)\ (x \in I_{<\ell}, \ y \in I_{>x})$ for
\begin{align}
	\lambda_{0}^{[x]} := \sup \{ \lambda \geq 0 \mid \bE_{u}[\mathrm{e}^{\lambda \tau_{x}^{-}}] < \infty \quad \text{for every $u \in I_{>x}$}\}. %{[\text{I think $I_{>x}$ is right.}]}\label{}
\end{align}
We denote $\lambda_{0} := \lambda_{0}^{[0]}$ {and call it the \textit{decay parameter}.} 
If there is a QSD $\nu$, from the Markov property we easily see that $\bP_{\nu}[\tau_{0} \in dt]$ is exponentially distributed.
Indeed, from \eqref{eq501}, we have $\bP_{\nu}\sbra{\tau_{0} - t \in ds \mid \tau_{0} > t} = \bP_{\nu}\sbra{\tau_{0} \in ds}$ for every $t>0$, which is the lack of memory property.
Hence, the positivity of $\lambda_{0}$ is necessary for the existence of a QSD.
Especially, 
\begin{align}
	\bP_{x}[\tau_{0} < \infty] = 1 \quad (x \in I). \label{certainlyHitZero}
\end{align}
is necessary.
We always assume \eqref{certainlyHitZero} in this section.
The condition $\mathrm{(I)}_{x}$ ensures that $\lambda_{0}^{{[x]}}$ is finite as we show in the following proposition.
%{[I think we should write why the following proposition is necessary. How about the following?]Under the condition $\mathrm{(I)}_{0}$, we have $\lambda_{0}^{{[x]}}<\infty$ for $x \in I$. This fact is ensured from the following proposition.}
%{[It seems that the condition $\mathrm{(I)}_{0}$ not necessarily implies $\lambda_{0}^{[x]} < \infty$.]}

\begin{Prop} \label{prop:finiteLambda0}
	Let $x \in I_{<\ell}$.
	If $\mathrm{(I)}_{x}$ holds,
	then $\lambda_{0}^{[x]} < \infty$.
\end{Prop}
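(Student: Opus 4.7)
The plan is to prove the contrapositive by producing a lower bound on $\bP_z[\tau_x > t]$ that decays only exponentially, for a well-chosen $z \in (x, \ell)$. Since $\lambda_{0}^{[x]} \leq \lambda_{0}^{[x]}(z)$ directly from the definitions, it suffices to show $\lambda_{0}^{[x]}(z) < \infty$ for some $z$; the role of $\mathrm{(I)}_x$ is to ensure positive probability of upward excursions.

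Fix $z \in (x, \ell)$ and $z_1 \in (z, \ell)$, and set $p := \bP_z[\tau_{z_1} < \tau_x]$, which is positive by $\mathrm{(I)}_x$. Define inductively stopping times $\sigma_0 := 0$, $\sigma_{2k-1} := \inf\{t > \sigma_{2k-2} : X_t = z_1\}$, $\sigma_{2k} := \inf\{t > \sigma_{2k-1} : X_t = z\}$, and let $N := \sup\{k : \sigma_{2k} < \tau_x\}$. By the strong Markov property, each ``up-down cycle'' $z \to z_1 \to z$ is an independent repetition, so $\bP_z[N \geq k] = p^k$, and conditional on $\{N \geq k\}$ the durations $T_i := \sigma_{2i} - \sigma_{2(i-1)}$ $(1 \leq i \leq k)$ are i.i.d.\ with some common law $\bP^*$. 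Here I will use the no-negative-jumps property together with \eqref{certainlyHitZero} to ensure that, from $z_1$, the process a.s.\ returns to $z$ in finite time before hitting $x$, so that the cycles are almost surely well-defined on the success event.

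Since $X$ is a standard process and thus has right-continuous paths, $z \neq z_1$ forces $\tau_{z_1} > 0$ under $\bP_z$ and $\tau_z > 0$ under $\bP_{z_1}$, hence $T_1 > 0$ $\bP^*$-a.s. Choose $\varepsilon, q > 0$ with $\bP^*[T_1 > \varepsilon] \geq q$. A standard Chernoff estimate on $\mathrm{Bin}(k, q)$ then gives $\bP^*\!\left[ \sum_{i=1}^k T_i > t \right] \geq \tfrac12$ for $k = \lceil 2t/(\varepsilon q) \rceil$ and $t$ sufficiently large. Consequently,
\begin{align*}
	\bP_z[\tau_x > t] \geq \bP_z[N \geq k]\cdot \bP^*\!\left[\textstyle\sum_{i=1}^k T_i > t\right] \geq \tfrac{1}{2}\, p^k \geq c\, \mathrm{e}^{-\alpha t}
\end{align*}
with $\alpha := 2|\log p|/(\varepsilon q)$ and some $c > 0$, so $\bE_z[\mathrm{e}^{\lambda \tau_x}] \geq c\int_{t_0}^\infty \lambda \mathrm{e}^{(\lambda-\alpha)t}\,dt = \infty$ for every $\lambda > \alpha$, giving $\lambda_0^{[x]}(z) \leq \alpha < \infty$.

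The step I expect to be the main obstacle is making the i.i.d.\ renewal structure fully rigorous in the generality of standard processes with positive jumps: one needs to check carefully that the strong Markov property applies at each $\sigma_{2k}$ (with $X_{\sigma_{2k}} = z$ by no negative jumps and $X_{\sigma_{2k-1}} = z_1$ by definition of $\tau_{z_1}$), and that the independence/identical-distribution of the $T_i$ conditional on success is genuine rather than only approximate.
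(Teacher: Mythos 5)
Your argument is correct, and it is in the same spirit as the paper's proof—both derive an exponential lower bound on the survival probability $\bP_\cdot[\tau_x > t]$ by exploiting, via the strong Markov property, a return loop away from $x$ and back—but your implementation is considerably heavier than what the paper does. The paper takes a single point $y \in (x,z)$, defines $\tau := \inf\{t > \tau_z^+ : X_t = y\}$ (first return to $y$ after rising above $z$), picks $r > 0$ with $\delta := \bP_y[r < \tau < \tau_x] \in (0,1)$, and observes that the strong Markov property at $\tau$ immediately gives the recursive inequality $\bP_y[\tau_x > t] \geq \delta\, \bP_y[\tau_x > t-r]$, which telescopes to $\bP_y[\tau_x > t] \geq \delta\, \mathrm{e}^{t(\log\delta)/r}$. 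No renewal decomposition, cycle counting, or Chernoff bound is needed: a single time $r$ suffices. Your construction (i.i.d.\ up--down cycle durations $T_i$, a binomial lower bound, Chernoff) also works, and the places you flagged as potential obstacles are in fact fine: the strong Markov property at the $\sigma_{2k}$ applies because $X_{\sigma_{2k}} = z$ on $\{\sigma_{2k} < \infty\}$ (no negative jumps forces the downward passage to hit $z$, and $\tau_x < \infty$ a.s.\ by \eqref{certainlyHitZero} guarantees the cycle closes), and the $T_i$ are genuinely i.i.d.\ under the conditioning by iterating the strong Markov property. One terminological slip: you announce "proving the contrapositive," but what you actually give is a direct proof of the implication, which is what is wanted. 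The net gain of the paper's route is economy; the net gain of yours is an explicit, quantitative renewal picture, but for the stated proposition this extra machinery is unnecessary.
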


\begin{proof}
	Take $y,z \in (x,\ell)$ with $y < z$.
	Set 
	\begin{align}
		\tau :=
			\inf \{ t > \tau_{z}^{+} \mid X_{t} = y \}, \label{}
	\end{align}
	where we interpret $\inf \emptyset = \infty$ as usual.
	It holds $\bP_{y}[\tau < \infty] > 0$ from $\mathrm{(I)}_{x}$.
	By the right-continuity of $X$, we can take $r > 0$ such that $\delta := \bP_{y}[r < \tau < \tau_{x}] \in (0,1)$.
	Then it holds for $t > r$
	\begin{align}
		\bP_{y}[\tau_{x} > t] &\geq \bP_{y}[\tau_{x} > t, \tau < \tau_{x}] \label{} \\
		&\geq \int_{(r, \infty)}\bP_{y}[\tau_{x}> t-s] \bP_{y}[\tau \in ds, \tau < \tau_{x}] \label{} \\
		&\geq \delta \bP_{y}[\tau_{x} > t-r]. \label{}
	\end{align}
	Inductively, we see for $t > r$ that
	{
	\begin{align}
		\bP_{y}[\tau_{x} > t] \geq \bP_{y}[\tau_{x} > r]\mathrm{e}^{t (\log \delta) /r} \geq \delta \mathrm{e}^{t (\log \delta) /r}, \label{511}
	\end{align}
	}
	and it follows $\lambda_{0}^{[x]} < \infty$.
\end{proof}

We introduce a classification of the boundary $\ell$.

\begin{Def} \label{def:boundaryClassification}
We say that the boundary $\ell$ is \textit{entrance} when
\begin{align}
	\int_{(0,\ell)}W(0,u) m(du) < \infty, \label{entrance}
\end{align}
and we say that the boundary $\ell$ is \textit{non-entrance} when
\begin{align}
	\int_{(0,\ell)}W(0,u) m(du) = \infty. \label{non-entrance}
\end{align}
Note that when $\ell \in I$, the boundary $\ell$ is always entrance from Proposition \ref{prop:integrabilityOfW}.
\end{Def}

Our main objective in this section is to prove the following:
\begin{Thm} \label{thm:existenceOfQSD}
	Let $\cQ$ be the set of QSDs.
	The following holds:
	\begin{enumerate}
		\item If $\ell$ is entrance, then $\lambda_{0} > 0$ and $\cQ = \{ \nu_{\lambda_{0}} \}$.
		\item If $\ell$ is non-entrance and $\lambda_{0} > 0$, then $\cQ = \{ \nu_{\lambda} \}_{\lambda \in (0,\lambda_{0}]}$.
	\end{enumerate}
	Here 
	\begin{align}
		\nu_{\lambda}(dx) := \lambda W^{(-\lambda)}(0,x)m(dx) \quad (\lambda \in (0,\lambda_{0}]). \label{} 
	\end{align}
\end{Thm}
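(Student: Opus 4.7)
My plan is to prove Theorem \ref{thm:existenceOfQSD} in two directions — uniqueness via an eigenequation for the QSD density and existence via direct verification using the resolvent identity — with the entrance/non-entrance dichotomy controlled by an identity derived from Corollary \ref{cor:resolventIdentityForZ}.

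\textbf{Uniqueness.} If $\nu$ is a QSD, the Markov property applied to \eqref{eq501} gives $\bP_\nu[\tau_0 > t] = e^{-\lambda t}$ for some $\lambda \in (0, \lambda_0]$. Laplace-transforming in $t$ and applying the killed-potential formula (Theorem \ref{Thm201}) shows $\nu \ll m$ with density $h = d\nu/dm$ satisfying, for $q \geq 0$,
\begin{align}
h(z) = \lambda W^{(q)}(0, z) - (q + \lambda) \int_{(0, z)} h(y) W^{(q)}(y, z) m(dy).
\end{align}
By Theorem \ref{thm:WequalM} and the bound \eqref{eq03}, both sides extend analytically in $q$ to $\bC$ (the integral stays finite because $\int h(y) W(y, z) m(dy) \leq W(0, z)$ by monotonicity); evaluating at $q = -\lambda$ kills the integral and gives $h = \lambda W^{(-\lambda)}(0, \cdot)$, so $\nu = \nu_\lambda$.

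\textbf{Existence and normalization.} By the resolvent identity (Proposition \ref{prop:resolventIdentity}),
\begin{align}
\int_{(0, z)} W^{(-\lambda)}(0, y) W^{(q)}(y, z) m(dy) = \frac{W^{(q)}(0, z) - W^{(-\lambda)}(0, z)}{q + \lambda},
\end{align}
which via Theorem \ref{Thm201} reduces the eigenequation $\nu_\lambda R^{(0), q} = (q+\lambda)^{-1} \nu_\lambda$ to $\int W^{(-\lambda)}(0, y) \bE_y[e^{-q \tau_0}] m(dy) = (q+\lambda)^{-1}$; I verify this using $\bE_y[e^{-q\tau_0}] = \lim_{z \to \ell} W^{(q)}(y, z)/W^{(q)}(0, z)$ (Proposition \ref{prop:exitProblem1} plus inaccessibility \eqref{inaccessible}) and $\lim_{z \to \ell} W^{(-\lambda)}(0, z)/W^{(q)}(0, z) = 0$ for $q > 0$. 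For the normalization $\nu_\lambda(I) = 1 \iff \lim_{z \to \ell} Z^{(-\lambda)}(0, z) = 0$, apply Corollary \ref{cor:resolventIdentityForZ} with $q = 0$, $r = -\lambda$ to obtain
\begin{align}
Z^{(-\lambda)}(0, z) = 1 - \lambda \int_{(0, z)} W(0, u) Z^{(-\lambda)}(u, z) m(du);
\end{align}
combining with the analytically continued Proposition \ref{prop:exitProblemZ} at $q = -\lambda$ and dominated convergence (valid since $\bE_u[e^{\lambda \tau_0}] < \infty$ for $\lambda < \lambda_0$) one obtains $\lim_{z \to \ell} Z^{(-\lambda)}(u, z) = \bE_u[e^{\lambda \tau_0}] \cdot Z_0$, where $Z_0 := \lim_{z \to \ell} Z^{(-\lambda)}(0, z)$. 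Passing $z \to \ell$ in the displayed identity yields
\begin{align}
Z_0 \Bigl[\, 1 + \lambda \int_{(0, \ell)} W(0, u) \bE_u[e^{\lambda \tau_0}] m(du) \,\Bigr] = 1,
\end{align}
so $Z_0 = 0$ if and only if $\int W(0, u) \bE_u[e^{\lambda \tau_0}] m(du) = \infty$.

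\textbf{Dichotomy and main obstacle.} The preceding criterion directly separates the two cases. In the non-entrance case, $\int W(0, u) m(du) = \infty$ combined with $\bE_u[e^{\lambda \tau_0}] \geq 1$ forces divergence for every $\lambda \in (0, \lambda_0]$, so each $\nu_\lambda$ is a QSD. In the entrance case, $\bE_u[e^{\lambda \tau_0}]$ is uniformly bounded by $\bE_\ell[e^{\lambda \tau_0}] < \infty$ for $\lambda < \lambda_0$ (and the integral is finite, so $\nu_\lambda$ is strictly sub-probability), while at $\lambda = \lambda_0$, $\bE_u[e^{\lambda_0 \tau_0}]$ blows up at $\ell$ causing divergence, yielding the unique QSD $\nu_{\lambda_0}$. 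Finally, $\lambda_0 > 0$ in the entrance case follows from Theorem \ref{Thm201} giving $\sup_y \bE_y[\tau_0] \leq \int W(0, u) m(du) < \infty$, together with a standard Markov iteration. The principal technical obstacles are justifying the asymptotics $\lim_{z \to \ell} W^{(-\lambda)}(0, z)/W^{(q)}(0, z) = 0$ for $q > 0$ and $\lim_{z \to \ell} Z^{(-\lambda)}(u, z) = \bE_u[e^{\lambda \tau_0}] \cdot Z_0$, along with the associated limit/integral interchanges; these rely on carefully combining the resolvent identity, the analytic extension of the scale functions, and the positivity results of Section \ref{section:positivity}.
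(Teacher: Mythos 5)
Your overall architecture matches the paper's: identify the density of any QSD via a Volterra equation and the analytic extension to get $\nu=\nu_\lambda$ (this is the content of Lemma \ref{lem:qsdCharacterization-c} and you reproduce it correctly), then characterize when $\nu_\lambda$ is in fact a QSD via the normalization $\nu_\lambda(I)=1$, i.e.\ whether $Z_0:=\lim_{z\to\ell}Z^{(-\lambda)}(0,z)=0$ (Lemma \ref{lem:scaleFuncGivesQSD-entrance-c}). The reduction $\nu_\lambda R^{(0),q}=(q+\lambda)^{-1}\nu_\lambda \Leftrightarrow \int W^{(-\lambda)}(0,y)\bE_y[e^{-q\tau_0}]m(dy)=(q+\lambda)^{-1}$ via the resolvent identity and Theorem \ref{Thm201} is correct. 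Your Markov-iteration argument from $\sup_y\bE_y[\tau_0]<\infty$ to $\lambda_0>0$ is a genuine simplification over the paper's route through Corollary \ref{cor:lambdaZeroPositivity}.

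However, two steps in the heart of the dichotomy are not justified and are in fact circular. First, your key identity $Z_0\bigl[\,1+\lambda\int_{(0,\ell)}W(0,u)\bE_u[e^{\lambda\tau_0}]m(du)\,\bigr]=1$ is derived by passing $z\to\ell$ inside the integral in $Z^{(-\lambda)}(0,z)=1-\lambda\int_{(0,z)}W(0,u)Z^{(-\lambda)}(u,z)m(du)$; in the non-entrance case $\int W(0,u)m(du)=\infty$ so dominated convergence is unavailable, and indeed the interchange \emph{must} fail because the left side tends to $1-Z_0>0$ while the term-by-term limit is $0$ when $Z_0=0$. A one-sided Fatou argument (using non-negativity of $Z^{(-\lambda)}(u,z)$) does give $\tfrac{1-Z_0}{\lambda}\geq Z_0\int W(0,u)g^{(-\lambda)}(0,u)m(du)$, which is enough for the implication non-entrance $\Rightarrow Z_0=0$, but this is not what you wrote; and even so, to push this through all $\lambda\in(0,\lambda_0]$ you also need the propagation property that $h$ vanishing for one $\lambda$ implies it vanishes for all $\lambda\leq\lambda_0$ (Proposition \ref{prop:nonnegativityImpliesIntegrability-entrance-c} (v)), which you do not invoke.

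Second, and more seriously, in the entrance case your dichotomy rests on the claims that $\bE_u[e^{\lambda\tau_0}]$ is uniformly bounded in $u$ for $\lambda<\lambda_0$ and ``blows up at $\ell$'' for $\lambda=\lambda_0$. Both claims are equivalent to the statement you are trying to prove. Under entrance, $g^{(-\lambda)}(0,u)=Z^{(-\lambda)}(u)/Z^{(-\lambda)}(0)$ (Lemma \ref{lem:analyticExtentionOfh}) and $Z^{(-\lambda)}(u)\to1$ as $u\to\ell$, so $\sup_u g^{(-\lambda)}(0,u)<\infty$ if and only if $Z^{(-\lambda)}(0)>0$, i.e.\ if and only if $\nu_\lambda$ is \emph{not} a QSD; likewise $g^{(-\lambda_0)}(0,u)=\infty$ is exactly the assertion $Z^{(-\lambda_0)}(0)=0$, i.e.\ that $\nu_{\lambda_0}$ \emph{is} a QSD. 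The paper breaks this circle by proving (Lemma \ref{lem:r-recurrence}) that the meromorphic function $q\mapsto Z^{(q)}(y)/Z^{(q)}(x)$ has a pole at $q=-\lambda_0^{[x]}(y)$, which forces $Z^{(-\lambda_0)}(0)=0$; this analytic-continuation/pole argument (borrowed from \cite{QSD_downward_skip-free}) is the genuinely new input you are missing and cannot replace by the boundedness claim. Without it, your proposal does not establish Theorem \ref{thm:existenceOfQSD}(i).
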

The proof of Theorem \ref{thm:existenceOfQSD} is given as a consequence of the various propositions we see below.
\begin{Rem}\label{Rem503}
	Our classification of the boundary $\ell$ is a generalization of Feller's for one-dimensional diffusions.
	For general results on one-dimensional diffusions we use in this remark, see, e.g., \cite[Chapter 5]{Ito_essentials}, \cite[Chapter V.7]{RogersWilliams2} and \cite[Chapter 33]{Kallenberg-third}.
	Suppose $X$ is a $\frac{d}{dw}\frac{d}{ds}$-diffusion on $I$, that is, a non-singular diffusion on $I$ with the speed measure $dw$ and the (usual) scale function $s$.
	By \eqref{certainlyHitZero}, we may take the scale function $s$ so that $s(0) = 0$ without loss of generality.
	Note that the reference measure $m$ can be taken as $dw$.
	The boundary $\ell$ is said to be entrance or non-entrance depending on $\int_{(0,\ell)}s(x)dw(x)$ is finite or infinite, respectively.
	Since it holds from a well-known potential formula for diffusions that 
	\begin{align}
		\bE_{x}[\tau_{0}] = \int_{(0,\ell)}s(x \wedge y)dw(y). \label{}
	\end{align}
	By the monotone convergence theorem, we have
	\begin{align}
		\int_{(0,\ell)}s(y)dw(y) = \sup_{x \in I_{>x}}\bE_{x}[\tau_{0}] = \int_{(0,\ell)} W(0,y)dw(y), \label{}
	\end{align}
	where the second equality follows from Proposition \ref{prop:boundaryClassificationEquivalence} we will show.
	Thus, our classification is a generalization of Feller's.
	See also \cite[Remark 3.3]{NobaGeneralizedScaleFunc}.
\end{Rem}

We present a probabilistic interpretation of the boundary classification.
The proof will be give after Remark \ref{rem:non-negativityOfPotentialDensity}.
\begin{Prop} \label{prop:boundaryClassificationEquivalence}
	It holds
	\begin{align}
		\sup_{x \in I_{>0}} \bE_{x}[\tau_{0}] = \int_{(0,\ell)}W(0,u)m(du). \label{}
	\end{align}
\end{Prop}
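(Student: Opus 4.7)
The plan is to derive the identity
\begin{align}
\bE_y[\tau_0] = \int_{(0,\ell)} (W(0,z) - W(y,z)) m(dz)
\end{align}
for each $y \in I_{>0}$ by combining Theorem \ref{Thm201} with the occupation density formula, and then to pass to the limit $y \uparrow \ell$ via monotone convergence.

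For the first step, under our identification of $I$ we have $I_{\leq 0} = \{0\}$, hence $\tau_0^- = \tau_0$; the standing hypothesis \eqref{certainlyHitZero} gives $\bP_y[\tau_0 < \infty] = 1$. Thus Theorem \ref{Thm201} with $q = 0$ and $x = 0$ yields $\bE_y[L^z_{\tau_0}] = W(0,z) - W(y,z)$ for $y,z \in I_{>0}$. Applying the potential density formula \eqref{potentialDensityFormula} with $q = 0$, $f \equiv 1$, and $T = \tau_0$ then gives $\bE_y[\tau_0] = \int_I \bE_y[L^z_{\tau_0}] m(dz)$. The contribution from $z = 0$ vanishes: by \eqref{occupationTimeFormula} applied to $f = 1_{\{0\}}$ and $t = \tau_0$, one has $L^0_{\tau_0} m\{0\} = \int_0^{\tau_0} 1_{\{0\}}(X_s) ds = 0$ since $X_s > 0$ for every $s \in [0, \tau_0)$ under $\bP_y$. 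If $\ell \in I$, the contribution from $z = \ell$ is also zero, because $\ell$ is irregular in the inaccessible case (as noted in the paragraph preceding Definition \ref{def:boundaryClassification}) and thus $m\{\ell\} = 0$ by Remark \ref{rem:W(x,x)m(x)=0} (applicable since $\ell > \ell_1 = 0$). Combining these facts yields the claimed identity.

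For the second step, the remark preceding Proposition \ref{prop:integrabilityOfW} shows that $x \mapsto W(x,z)$ is non-increasing, so $W(0,z) - W(y,z)$ is non-negative and non-decreasing in $y$. In particular $y \mapsto \bE_y[\tau_0]$ is non-decreasing, so $\sup_{y \in I_{>0}} \bE_y[\tau_0] = \lim_{y \uparrow \ell} \bE_y[\tau_0]$. Since $W(y,z) = 0$ whenever $y > z$, for each fixed $z \in (0,\ell)$ the integrand converges to $W(0,z)$ as $y \uparrow \ell$. The monotone convergence theorem then gives
\begin{align}
\lim_{y \uparrow \ell} \int_{(0,\ell)} (W(0,z) - W(y,z)) m(dz) = \int_{(0,\ell)} W(0,z) m(dz),
\end{align}
which is the desired equality.

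The only delicate point I anticipate is the handling of the boundary contributions at $z = 0$ and $z = \ell$ in the occupation density formula, where one must rule out extraneous mass at atoms of $m$; once these are disposed of, the rest of the argument is a direct application of Theorem \ref{Thm201} together with monotone convergence, and no further subtlety arises.
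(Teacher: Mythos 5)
Your proof is correct and takes essentially the same route as the paper: it establishes the key intermediate identity $\bE_y[\tau_0]=\int_{(0,\ell)}(W(0,u)-W(y,u))\,m(du)$ and then lets $y\uparrow\ell$ by monotone convergence. The only (cosmetic) difference is that you re-derive this identity directly from Theorem \ref{Thm201} and the occupation density formula, carefully disposing of possible atoms of $m$ at $0$ and $\ell$, whereas the paper simply quotes its pre-established one-sided potential formula \eqref{potentialDensityOneside-c} at $q=0$.
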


Define 
\begin{align}
	g^{(q)}(x,y) := \bE_{y}[\mathrm{e}^{-q\tau_{x}^{-}}] \quad (q\in[0,\infty), \ x, y\in I ). \label{eq71}
\end{align} 
Note that this function is finite when $q \in (-\lambda_{0}^{[x]}(y),\infty) \cup \{ 0 \}$ and $x,y \in I$ with $x<y$.
We especially write $g^{(q)}(y) := g^{(q)}(0,y)$.
By \eqref{potentialDensityFormula}, \eqref{inaccessible} and taking limit as $z \to \ell$ in \eqref{potentialDensity}, we have {a potential density formula on $I$ killed at $0$:}
\begin{align}
	\int_{0}^{\infty}\mathrm{e}^{-qt} \bP_{x}[X_{t} \in dy, \tau_{0} > t]dt = r^{(q)}(x,y) m(dy) \quad (q \geq 0,\ x,y \in I), \label{potentialDensityOneside-c}
\end{align}
where
\begin{align}
	r^{(q)}(x,y) := g^{(q)}(x) W^{(q)}(0,y) - W^{(q)}(x,y) \quad (q \geq 0,\ x,y \in I). \label{}
\end{align}
By the analytic extension, the equality \eqref{potentialDensityOneside-c} can be extended to $q \in \bC$ with $\Re q > -\lambda_{0}$.

\begin{Rem} \label{rem:non-negativityOfPotentialDensity}
	It holds $r^{(q)}(x,y) > 0$ for every $q \in (-\lambda_{0},\infty) \cup \{0\}$ and $x,y \in I_{>0}$.
	Indeed, we have from Proposition \ref{prop:exitProblem1}
	\begin{align}
		r^{(q)}(x,y) 
		=&g^{(q)}(x) W^{(q)}(0,y) - \bE_x\sbra{e^{-q\tau^-_0}; \tau^-_0<\tau^+_y  }W^{(q)}(0,y) \label{}
		\\
		=& W^{(q)}(0,y) \bE_{x}[\mathrm{e}^{-q \tau_{0}}, \tau_{y}^{+} < \tau_{0}] \in (0,\infty),\label{}
	\end{align}
	where the positivity of the last term follows from Proposition \ref{prop:positivityOfW}, \eqref{certainlyHitZero} and $\mathrm{(I)}_{0}$.
\end{Rem}

We prove Proposition \ref{prop:boundaryClassificationEquivalence}.

\begin{proof}[Proof of Proposition \ref{prop:boundaryClassificationEquivalence}]
	By the potential formula \eqref{potentialDensityOneside-c} for $q = 0$, it holds for $x \in I_{>0}$
	\begin{align}
		\bE_{x}[\tau_{0}] = \int_{(0,\ell)} (W(0,u) - W(x,u)) m(du). \label{}
	\end{align}
	Taking the limit as $x \to \ell$, we have the desired consequence by the monotone convergence theorem.
\end{proof}
\begin{Rem}
	When \eqref{inaccessible} and the entrance condition hold and $\ell \not \in I$, under a suitable regularity of the semigroup of $X$, we can naturally add $\ell$ to the state space $I$ as an instantaneous entrance boundary and consider the law starting from $\ell$ (see \cite{entranceBdry} for details).
\end{Rem}
In the following, we will give several propositions for the proof of Theorem \ref{thm:existenceOfQSD}.
The majority of them can be proven almost as well in the downward skip-free  Markov chain case studied in \cite{QSD_downward_skip-free}.
Therefore, we sometimes omit the proofs and just refer to the corresponding ones in \cite{QSD_downward_skip-free}.
We first show that every QSD is represented by a scale function.

\begin{Lem} [{cf. \cite[Lemma 4.1]{QSD_downward_skip-free}}]\label{lem:qsdCharacterization-c}
	Let $\nu$ be a QSD such that $\bP_{\nu}[\tau_{0} > t] = \mathrm{e}^{-\lambda t}$ for some $\lambda > 0$.
	Then it holds
	\begin{align}
		\nu(dx) = \lambda W^{(-\lambda)}(0,x) m(dx) \quad (x \in I). \label{eq51}
	\end{align}
\end{Lem}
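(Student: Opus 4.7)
The plan is to combine the QSD identity \eqref{eq501} with the potential density formula \eqref{potentialDensityOneside-c} and then pass to the ``eigenvalue'' $q=-\lambda$ using the entireness of $W^{(q)}$ from Theorem \ref{thm:WequalM}.

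From the hypothesis $\bP_{\nu}[\tau_{0}>t]=\mathrm{e}^{-\lambda t}$, one reads off $\tau_{0}\sim \mathrm{Exp}(\lambda)$ under $\bP_{\nu}$, hence $\bE_{\nu}[\mathrm{e}^{-q\tau_{0}}]=\lambda/(q+\lambda)$ for $q>-\lambda$; the QSD identity at $\cbra{0}$ also forces $\nu(\cbra{0})=0$. Laplace-transforming $\bP_{\nu}[X_{t}\in dy,\tau_{0}>t]=\mathrm{e}^{-\lambda t}\nu(dy)$ against $\mathrm{e}^{-qt}dt$ for $q>0$ and applying \eqref{potentialDensityOneside-c} with Fubini yields, as measures on $I$,
\begin{align}
\frac{\nu(dy)}{q+\lambda}=\rbra{\int_{I}r^{(q)}(x,y)\nu(dx)}m(dy).
\end{align}
Substituting $r^{(q)}(x,y)=g^{(q)}(x)W^{(q)}(0,y)-W^{(q)}(x,y)$ and $\int g^{(q)}d\nu=\lambda/(q+\lambda)$ and multiplying by $(q+\lambda)$ gives, for every $q>0$,
\begin{align}
\nu(dy)=\sbra{\lambda W^{(q)}(0,y)-(q+\lambda)\rho^{(q)}(y)}m(dy),\qquad \rho^{(q)}(y):=\int_{I}W^{(q)}(x,y)\nu(dx);
\end{align}
in particular $\nu\ll m$.

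For each $q>0$ the bracketed expression is a version of $d\nu/dm$, so any two of them agree $m$-a.e. Taking $q$ through a countable dense subset of $(0,\infty)$, one obtains a common $m$-null set $N$ and a reference function $h(y)$ such that, for every rational $q>0$ and every $y\in I_{>0}\setminus N$,
\begin{align}
\lambda W^{(q)}(0,y)-(q+\lambda)\rho^{(q)}(y)=h(y).
\end{align}
Combining the series expansion \eqref{seriesExpansionOfM} with the bound \eqref{eq03} and the monotonicity $W(x,\cdot)\leq W(0,\cdot)$, one has, for each $y<\ell$,
\begin{align}
\sum_{n\geq 0}|q|^{n}\int_{I}W^{\otimes(n+1)}(x,y)\nu(dx)\leq W(0,y)\nu(I)\mathrm{e}^{|q|F(y)},\qquad F(y):=\int_{(0,y)}W(0,u)m(du),
\end{align}
with $F(y)<\infty$ by Proposition \ref{prop:integrabilityOfW}. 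Hence $q\mapsto \rho^{(q)}(y)$ is entire on $\bC$, so is $q\mapsto \lambda W^{(q)}(0,y)-(q+\lambda)\rho^{(q)}(y)$, and the identity theorem extends the preceding identity to every $q\in\bC$.

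Setting $q=-\lambda$ kills the correction term and gives $h(y)=\lambda W^{(-\lambda)}(0,y)$ for $m$-almost every $y\in I_{>0}$, which together with $\nu(\cbra{0})=0$ yields \eqref{eq51}. The main obstacle is the entireness of $q\mapsto \rho^{(q)}(y)$: it relies on the uniform-in-$x$ series estimate of Proposition \ref{prop:defOfM}, which in turn is available precisely because of the finiteness of $F(y)$ for $y<\ell$ from Proposition \ref{prop:integrabilityOfW}.
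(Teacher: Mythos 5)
Your proof is correct, but it takes a genuinely different route from the paper. The paper evaluates the potential-density identity only at $q=0$, which immediately yields the Volterra integral equation $\rho(y)=\lambda W(0,y)-\lambda\int_{(0,y)}\rho(u)W(u,y)\,m(du)$ for the density $\rho=d\nu/dm$, and then invokes the uniqueness statement (Corollary~\ref{Cor404}) to conclude $\rho=\lambda W^{(-\lambda)}(0,\cdot)$ $m$-a.e.\ directly. You instead keep the whole one-parameter family $q>0$ of resolvent identities, package them into the single entire expression $\lambda W^{(q)}(0,y)-(q+\lambda)\rho^{(q)}(y)=h(y)$ for $y$ outside a common null set, and then let $q\to-\lambda$ so that the correction term $(q+\lambda)\rho^{(q)}(y)$ vanishes. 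Both arguments rest on the same underlying ingredients (the series expansion \eqref{seriesExpansionOfM}, the growth bound \eqref{eq03}, the monotonicity of $W(\cdot,y)$, and the finiteness from Proposition~\ref{prop:integrabilityOfW}), but deploy them differently: the paper's route is shorter because it can quote Corollary~\ref{Cor404}, while yours is self-contained (it never explicitly uses the Volterra uniqueness) and makes the ``eigenvalue'' interpretation transparent — $q=-\lambda$ is exactly the value at which the residual term disappears. A very minor point, shared with the paper, is that the equality of measures is verified on $(0,\ell)$ together with $\nu(\{0\})=0$; strictly speaking one should also note that the right-hand side of \eqref{eq51} places no mass at $0$, which is automatic when $0$ is regular or when $m(\{0\})=0$.
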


\begin{proof}
	On the one hand, from \eqref{potentialDensityOneside-c} for $q = 0$, it holds
	\begin{align}
		\int_{0}^{\infty}\bP_{\nu}[X_{t} \in dy, \tau_{0} > t]dt &= \left(\int_{I}r^{(0)}(x,y)\nu(dx) \right) m(dy) \label{} \\
		&= \left( W(0,y) - \int_{I} W(x,y)\nu(dx) \right) m(dy). \label{eq15}
	\end{align}
	On the other hand, since $\nu$ is a QSD with $\bP_{\nu}[\tau_{0} > t] = \mathrm{e}^{-\lambda t}$, we have
	\begin{align}
		\int_{0}^{\infty}\bP_{\nu}[X_{t} \in dy, \tau_{0} > t]dt 
		&=\int_{0}^{\infty}\bP_{\nu}[X_{t} \in dy \mid \tau_{0} > t]\bP_\nu[\tau_{0} > t]dt \\
		&= \nu(dy)\int_{0}^{\infty}\mathrm{e}^{-\lambda t}dt = \frac{\nu(dy)}{\lambda}. \label{eq16}
	\end{align}
	From \eqref{eq15} and \eqref{eq16}, we see $\nu$ is absolutely continuous w.r.t.\ $m$.
	Let us denote the density by $\rho$.
	Then it follows
	\begin{align}
		\rho(y) = \lambda W(0,y) - \lambda \int_{(0,y)}\rho(u) W(u,y) m(du) \quad m\text{-a.e.\  $y \in I$}. \label{}
	\end{align}
	From Corollary \ref{Cor404}, we see $\rho(y) = \lambda W^{(-\lambda)}(0,y)$ $m$-a.e.
\end{proof}

{
Since we have known the possible form of a QSD, it suffices to consider when the measure of that form is a QSD.
We introduce a function $h^{(q)}$ that plays an essential role to characterize the existence of QSDs.
}

\begin{Prop}[{cf. \cite[Proposition 4.2]{QSD_downward_skip-free}}] \label{prop:nonnegativityImpliesIntegrability-entrance-c}
	Let $x \in I_{<\ell}$ and $\lambda \in (-\infty,\lambda_{0}^{[x]}] \cap \bR$.
	For $q > -\lambda$, the limit
	\begin{align}
		h^{(q)}(\lambda;x) := \lim_{y \to \ell} \frac{W^{(-\lambda)}(x,y)}{W^{(q)}(x,y)} \label{defOfH-c}
	\end{align}
	exists (we especially write $h := h^{(0)}$) and finite.
	% If $\lambda < \lambda_{0}$ or $\lambda \leq 0$ it also holds
	% \begin{align}
	% 	h^{(q)}(\lambda;x) = \frac{g^{(-\lambda)}(x)}{g^{(q)}(x)}h^{(q)}(\lambda;0). \label{eq34-c}
	% \end{align}
	In addition, it holds
	\begin{align}
		0 < \int_{I_{>x}}  W^{(-\lambda)}(x,u) g^{(q)}(x,u) m(du) \leq \frac{1}{\lambda+q} \quad (x \in I)
		 \label{eq44-c}
	\end{align}
	and
	\begin{align}
		h^{(q)}(\lambda;x) = 1 - (\lambda+q) \int_{I_{>x}} W^{(-\lambda)}(x,u)g^{(q)}(x,u) m(du). \label{h-funcRepresentation-c}
	\end{align}
	The function $h^{(q)}$ has the following properties:
	\begin{enumerate}
		\item $0 \leq h^{(q)}(\lambda;x) < 1$.
		\item $h^{(q)}(\lambda;x)$ is non-increasing in $\lambda$ and $q$.
		\item For fixed $q > 0$, the RHS of \eqref{h-funcRepresentation-c} can be analytically extended in $\lambda \in \bC$ with $|\lambda| < q$.
		\item For fixed $\lambda \in (-\infty,\lambda_{0}^{[x]})$, the RHS of \eqref{h-funcRepresentation-c} can be analytically extended in $q \in \bC$ with $\Re q > -\lambda$.
		\item If $h^{(q)}(\lambda;x) = 0$ for some $\lambda \in (-\infty, \lambda_{0}^{[x]})$ and $q \in (-\lambda,\infty)$,
		it also holds for every $\lambda \in (-\infty, \lambda_{0}^{[x]}] \cap \bR$ and $q \in ( -\lambda, \infty)$.
	\end{enumerate}
\end{Prop}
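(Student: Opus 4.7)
The plan is to derive the identity \eqref{h-funcRepresentation-c} as the limit of a pre-limit identity coming from the resolvent identity, and then to read off all the other assertions from it.

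First I would apply Proposition \ref{prop:resolventIdentity} to $W^{(q)}$ and $W^{(-\lambda)}$ with $q>-\lambda$, invoke Proposition \ref{prop:exitProblem1}, and divide by $W^{(q)}(x,y)$ (which is strictly positive by Proposition \ref{prop:positivityOfW}(ii), since $\lambda_{0}^{[x]}\leq\lambda_{0}^{[x,y]}$ yields $q>-\lambda_{0}^{[x,y]}$) to obtain
\begin{align}
	1 - \frac{W^{(-\lambda)}(x,y)}{W^{(q)}(x,y)} = (q+\lambda) \int_{(x,y)} W^{(-\lambda)}(x,u)\, \bE_{u}\bigl[\mathrm{e}^{-q\tau_{x}^{-}}, \tau_{x}^{-} < \tau_{y}^{+}\bigr]\, m(du).
\end{align}
Letting $y\to\ell$, the crucial point is that $\tau_{y}^{+}$ is non-decreasing in $y$, and the inaccessibility condition \eqref{inaccessible} combined with \eqref{certainlyHitZero} forces $\tau_{y}^{+}\uparrow\infty$ $\bP_{u}$-a.s. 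Two applications of monotone convergence (the integrand is non-negative by Proposition \ref{prop:positivityOfW}(ii) and Remark \ref{Rem502}, and non-decreasing in $y$) identify the right-hand side limit as $(q+\lambda)\int_{I_{>x}} W^{(-\lambda)}(x,u)\, g^{(q)}(x,u)\, m(du)$. Since the left-hand side lies in $[0,1]$ by the $q$-monotonicity $W^{(-\lambda)}\leq W^{(q)}$ (Proposition \ref{prop:positivityOfW}(iii)), the limit $h^{(q)}(\lambda;x)$ exists, the upper bound in \eqref{eq44-c} holds, and \eqref{h-funcRepresentation-c} follows; strict positivity of the integral comes from Proposition \ref{Prop101} together with $W^{(-\lambda)}(x,u)g^{(q)}(x,u)>0$ for $u$ close to $x$ (Corollary \ref{cor:ratioLimitScaleFunc} and \eqref{certainlyHitZero}).

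Properties (i) and (ii) then fall out directly from \eqref{h-funcRepresentation-c} and Proposition \ref{prop:positivityOfW}(iii). For (iii), the term-by-term bound $|W^{(-\lambda)}(x,u)|\leq W^{(|\lambda|)}(x,u)$ obtained from the series expansion \eqref{304}, combined with \eqref{eq44-c} applied at parameters $(-|\lambda|,q)$, dominates the integrand uniformly on $|\lambda|<q$, so swapping sum and integral yields a power series in $\lambda$ of radius at least $q$. For (iv), the estimate $|g^{(q)}(x,u)|\leq g^{(\Re q)}(x,u)$ and \eqref{eq44-c} provide a uniform integrable dominant on compact subsets of $\{\Re q>-\lambda\}$, and analyticity follows by Morera's theorem.

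The main obstacle I anticipate is (v), which calls for splicing two separate one-variable analytic extensions together via monotonicity. Starting from $h^{(q_{0})}(\lambda_{0};x)=0$, (i) and the $q$-monotonicity in (ii) force $q\mapsto h^{(q)}(\lambda_{0};x)$ to vanish on the ray $[q_{0},\infty)$, so by (iv) and the identity theorem it vanishes on the whole half-plane $\{\Re q>-\lambda_{0}\}$. Choosing $q_{1}$ with $|\lambda_{0}|<q_{1}$, the same argument applied to $\lambda\mapsto h^{(q_{1})}(\lambda;x)$ via (iii) and the $\lambda$-monotonicity extends the zero to the full disc $\{|\lambda|<q_{1}\}$. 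For any admissible $\lambda'<\lambda_{0}^{[x]}$, taking $q_{1}>|\lambda'|$ gives $h^{(q_{1})}(\lambda';x)=0$, and (iv) propagates this to every $q>-\lambda'$; the boundary case $\lambda'=\lambda_{0}^{[x]}$ (when finite) is handled by the sandwich $0\leq h^{(q)}(\lambda_{0}^{[x]};x)\leq h^{(q)}(\lambda;x)=0$ for $\lambda<\lambda_{0}^{[x]}$ arising from (i) and (ii).
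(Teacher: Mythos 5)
Your proposal matches the paper's proof in its essentials: both start from the resolvent identity \eqref{eq78}, rewrite $W^{(q)}(u,y)/W^{(q)}(x,y)$ via Proposition~\ref{prop:exitProblem1} (note the paper is explicit that one needs the analytic extension of \eqref{eq56} to $q>-\lambda_{0}^{[x]}$, since $q$ may be negative here), divide by $W^{(q)}(x,y)$, and pass to the limit $y\to\ell$ by monotone convergence using \eqref{inaccessible} and \eqref{certainlyHitZero}; this yields \eqref{eq44-c}, \eqref{h-funcRepresentation-c}, (i), and (ii) just as in the paper. For (iii)--(iv) you take a marginally different technical route: the paper establishes analyticity of the partial integrals $\int_{(x,c)}$, proves their locally uniform convergence as $c\to\ell$ by a tail estimate, and invokes Montel/Weierstrass; you instead dominate so as to swap sum and integral to obtain a convergent power series for (iii), and dominate $|g^{(q)}|\le g^{(\Re q)}$ plus Morera for (iv) — both arguments are correct and amount to the same estimate. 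The paper omits the proof of (v), deferring to the downward skip-free reference; your splicing argument (monotonicity on a ray, then analytic continuation in $q$, then in $\lambda$, then the sandwich at $\lambda_{0}^{[x]}$) supplies exactly what is needed and is consistent with the cited source.
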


\begin{proof}
	Let $x \in I_{<\ell}$ and take $\lambda \in (-\infty,\lambda_{0}^{[x]}] \cap \bR$ and $q \in (-\lambda,\infty)$.
	We have for $y \in (x, \ell)$
	\begin{align}
		0 &\leq \frac{W^{(-\lambda)}(x,y)}{W^{(q)}(x,y)} \label{} \\
		&=
		\frac{1}{W^{(q)}(x,y)}\rbra{ W^{(q)}(x,y) -  (\lambda+q) \int_{(x,y)}W^{(-\lambda)}(x,u)W^{(q)}(u,y)m(du)}
		\\
		&=
		1 - (\lambda + q) \int_{(x,y)} W^{(-\lambda)}(x,u) \bE_{u}[\mathrm{e}^{-q\tau_{x}},\tau_{x} < \tau_{y}^{+}] m(du), \label{}
	\end{align}
	where we used, in the first inequality, Proposition \ref{prop:positivityOfW} and Remark \ref{Rem502}, in the first equality Propositions \ref{prop:resolventIdentity}, and in the second equality, the analytic extension of \eqref{eq56} to $q \in \bC$ with $\Re q > -\lambda_{0}^{[x]}$.
	Taking the limit as $y \to \ell$, we have from the monotone convergence theorem and \eqref{inaccessible}
	\begin{align}
		0 \leq \lim_{y \to \ell} \frac{W^{(-\lambda)}(x,y)}{W^{(q)}(x,y)} = 1 - (\lambda+q) \int_{(x,\ell)} W^{(-\lambda)}(x,u) g^{(q)}(x,u) m(du) < 1. \label{eq18-entrance-c}
	\end{align}
	Here we note that $W^{(-\lambda)}(x,u) > 0$ for $u \in (x,x + \delta)$ for sufficiently small $\delta > 0$ from Corollary \ref{cor:ratioLimitScaleFunc} and this fact implies the last inequality of \eqref{eq18-entrance-c}. 
	Thus, since $m$ does not have a mass at $\ell$, we obtain \eqref{eq44-c}, \eqref{h-funcRepresentation-c} and the property (i). 
	The property (ii) is clear from Proposition \ref{prop:positivityOfW} and Remark \ref{Rem502}.
	% If $\lambda < \lambda_{0}$ or $\lambda \leq 0$, by analytically extending Proposition \ref{prop:exitProblem1} to $q \in \bC$ with $\Re q > -\lambda_0 $, we have
	% \begin{align}
	% 	h^{(q)}(\lambda;x) = \lim_{y \to \ell-} \frac{W^{(-\lambda)}(x,y)}{W^{(-\lambda)}(0,y)} \frac{W^{(-\lambda)}(0,y)}{W^{(q)}(0,y)}\frac{W^{(q)}(0,y)}{W^{(q)}(x,y)} = \frac{g^{(-\lambda)}(x)}{g^{(q)}(x)}h^{(q)}(\lambda;0). \label{}
	% \end{align}
	We show (iii).
	Let $\zeta \in \bC$.
	For $x,y \in I$ with $x < y$, it holds from \eqref{eq01} 
	\begin{align}
		\left|\frac{d}{d\zeta}W^{(-\zeta)}(x,y)\right| 
		&\leq \sum_{n \geq 0}(n+1)|\zeta|^{n} W^{\otimes (n+2)}(x,y) \label{} \\
		&\leq \left( \int_{(x,y)}W(x,v) m(dv) \right) W(x,y) \mathrm{e}^{|\zeta|\int_{(x,y)}W(x,v) m(dv)}. \label{}
	\end{align}
	Then it follows for every $c \in (x, \ell)$
	\begin{align}
		&\int_{(x,c)} \left| \frac{d}{d\zeta} W^{(-\zeta)}(x,u)\right| g^{(q)}(x,u) m(du) \label{} \\ 
		\leq &\left( \int_{(x,c)}W(x,v) m(dv) \right) \mathrm{e}^{|\zeta|\int_{(x,c)}W(x,v) m(dv)} \int_{(x,c)} W(x,u) g^{(q)}(x,u) m(du) < \infty. \label{}
	\end{align}
	Thus, the function
	\begin{align}
		\bC \ni \zeta \longmapsto \int_{(x,c)} W^{(-\zeta)}(x,u) g^{(q)}(x,u) m(du) \label{}
	\end{align}
	is holomorphic on $\bC$.
	Take $q > 0$ and $ r \in (0,q)$.
	Let $|\zeta| \leq r$.
	Since it holds $|W^{(-\zeta)}(x,y)| \leq W^{(r)}(x,y)$ from Proposition \ref{prop:defOfM},
	we see from \eqref{eq18-entrance-c} that for every $x, c \in [0,\ell)$ with $x < c$
	\begin{align}
		\int_{(c,\ell)} | W^{(-\zeta)}(x,u)| g^{(q)}(x,u) m(du) \leq \int_{(c,\ell)} W^{(r)}(x,u) g^{(q)}(x,u) m(du) \leq \frac{1}{q-r}. \label{}
	\end{align}
	Thus, the uniform convergence
	\begin{align}
		\lim_{c \to \ell} \sup_{|\zeta| \leq r } \left|\int_{(c,\ell)} W^{(-\zeta)}(x,u) g^{(q)}(x,u) m(du) \right| = 0 \label{}
	\end{align}
	holds, and the function $\zeta \mapsto \int_{(x,\ell)} W^{(-\zeta)}(x,u) g^{(q)}(x,u) m(du)$ is analytic on $|\zeta| < q$ by Montel's theorem.
	The property (iv) follows similarly.
	Indeed, let $\lambda \in (-\infty,\lambda_{0}^{[x]})$, $q \in (-\lambda,\infty)$ and $r \in (-\lambda,q)$.
	For $\zeta \in \bC$ and $x \in I_{< \ell}$ with $\Re \zeta > q$ it holds for $c \in [x,\ell)$
	\begin{align}
		&\int_{(c,\ell)} W^{(-\lambda)}(x,u) \left|\frac{d}{d\zeta} g^{(\zeta)}(x,u)\right| m(du) \label{} \\
		\leq &C \int_{(c,\ell)} W^{(-\lambda)}(x,u) g^{(q)}(x,u) m(du) \leq \frac{C}{q + \lambda}, \label{}
	\end{align}
	where $C := \sup_{x \geq 0} x\mathrm{e}^{-(r-q) x}$.
	{
	The proof of the assertion (v) is exactly the same as \cite[Proposition 4.2]{QSD_downward_skip-free}, and we omit it.
	}
\end{proof}

From \eqref{h-funcRepresentation-c} for $q = 0$, we see the following.

\begin{Cor}[{cf. \cite[Corollary 4.3]{QSD_downward_skip-free}}] \label{cor:ScaleDefinesFiniteMeasure-c}
	Let $x \in I_{< \ell}$ and suppose $\lambda_{0}^{[x]} > 0$.
	For $\lambda \in (0, \lambda_{0}^{[x]}] \cap \bR$, it holds
	\begin{align}
		\lambda \int_{(x,\ell)} W^{(-\lambda)}(x,u) m(du) = 1 - h (\lambda;x) \in (0,1], \label{}
	\end{align}
	which also implies $h(\lambda;x) = \lim_{y \to \ell}Z^{(-\lambda)}(x,y)$.
\end{Cor}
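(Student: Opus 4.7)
The plan is to obtain Corollary~\ref{cor:ScaleDefinesFiniteMeasure-c} as an essentially immediate specialization of Proposition~\ref{prop:nonnegativityImpliesIntegrability-entrance-c} at $q=0$, followed by a brief comparison with the definition \eqref{scaleFuncZ} of $Z^{(-\lambda)}$.

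First I would check that $q=0$ is admissible. Under the standing assumption $\lambda_{0}^{[x]}>0$, any $\lambda\in(0,\lambda_{0}^{[x]}]\cap\bR$ satisfies $0=q>-\lambda$, so we lie in the range of Proposition~\ref{prop:nonnegativityImpliesIntegrability-entrance-c}. Moreover $\lambda_{0}^{[x]}>0$ furnishes some $\lambda'>0$ with $\bE_{u}[\mathrm{e}^{\lambda'\tau_{x}^{-}}]<\infty$ for every $u\in I_{>x}$; in particular $\tau_{x}^{-}<\infty$ $\bP_{u}$-a.s., whence
\[
g^{(0)}(x,u)=\bE_{u}[1_{\{\tau_{x}^{-}<\infty\}}]=1\qquad(u\in I_{>x}).
\]
Substituting $q=0$ and $g^{(0)}(x,\cdot)\equiv 1$ into identity \eqref{h-funcRepresentation-c} yields
\[
h(\lambda;x)=1-\lambda\int_{I_{>x}}W^{(-\lambda)}(x,u)\,m(du),
\]
which is the desired identity after rearrangement. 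The inclusion $1-h(\lambda;x)\in(0,1]$ follows from property~(i) of Proposition~\ref{prop:nonnegativityImpliesIntegrability-entrance-c}, namely $h(\lambda;x)\in[0,1)$.

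For the final assertion I would compare with
\[
Z^{(-\lambda)}(x,y)=1-\lambda\int_{(x,y)}W^{(-\lambda)}(x,u)\,m(du),\qquad y\in(x,\ell),
\]
and let $y\to\ell$. To apply monotone convergence I only need $W^{(-\lambda)}(x,\cdot)\geq 0$ on $(x,\ell)$: for $\lambda<\lambda_{0}^{[x]}$ this is the strict positivity in Proposition~\ref{prop:positivityOfW}(ii), while for the boundary case $\lambda=\lambda_{0}^{[x]}$ one uses the elementary observation $\lambda_{0}^{[x]}\leq\lambda_{0}^{[x,y]}$ (since $\tau_{x}^{-}\wedge\tau_{y}^{+}\leq\tau_{x}^{-}$) together with Remark~\ref{Rem502}. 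The limit then equals $1-\lambda\int_{(x,\ell)}W^{(-\lambda)}(x,u)\,m(du)=h(\lambda;x)$, as claimed.

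There is no real obstacle: the corollary is a one-line substitution once Proposition~\ref{prop:nonnegativityImpliesIntegrability-entrance-c} is in hand. The only point requiring a moment's thought is the identification $g^{(0)}\equiv 1$ on $I_{>x}$, which is precisely where the non-degeneracy hypothesis $\lambda_{0}^{[x]}>0$ is used, and the routine verification that the integrand $W^{(-\lambda)}(x,\cdot)$ is non-negative up to the boundary case $\lambda=\lambda_{0}^{[x]}$.
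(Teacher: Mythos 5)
Your proof is correct and takes the same route the paper does: the paper's own comment before the corollary is precisely ``From \eqref{h-funcRepresentation-c} for $q=0$, we see the following,'' and you supply the (small but necessary) bookkeeping — that $\lambda_{0}^{[x]}>0$ forces $g^{(0)}(x,\cdot)\equiv 1$ on $I_{>x}$, that $h(\lambda;x)\in[0,1)$ from Proposition~\ref{prop:nonnegativityImpliesIntegrability-entrance-c}(i), and that $W^{(-\lambda)}(x,\cdot)\geq 0$ up to the endpoint $\lambda=\lambda_{0}^{[x]}$ via $\lambda_{0}^{[x]}\leq\lambda_{0}^{[x,z]}$ together with Proposition~\ref{prop:positivityOfW}(ii) and Remark~\ref{Rem502}, which justifies the monotone limit identifying $\lim_{y\to\ell}Z^{(-\lambda)}(x,y)$.
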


As another corollary, we show the positivity of $W^{(-\lambda_{0})}(x,\cdot)$ under $\mathrm{(I)}_{x}$.

\begin{Cor}[{cf. \cite[Corollary 4.4]{QSD_downward_skip-free}}] \label{cor:non-negativityOfScale-entrance-c}
	Let $x \in I_{<\ell}$ and suppose $\mathrm{(I)}_{x}$ holds.
	Then it holds
	\begin{align}
		W^{(-\lambda_{0}^{[x]})}(x,y) > 0 \quad (y \in (x,\ell)). \label{}
	\end{align}
\end{Cor}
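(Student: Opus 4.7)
My approach is a proof by contradiction. Suppose $W^{(-\lambda_0^{[x]})}(x,y_0)=0$ for some $y_0\in(x,\ell)$ and write $\lambda_0:=\lambda_0^{[x]}$. By Remark \ref{Rem502}, Proposition \ref{prop:continuityOfW}, and Corollary \ref{cor:ratioLimitScaleFunc}, the function $u\mapsto W^{(-\lambda_0)}(u,y_0)$ is non-negative, continuous on $[x,y_0)$, and strictly positive in some left-neighborhood of $y_0$. Since it vanishes at $u=x$, there exists a rightmost zero $u^*\in[x,y_0)$, meaning $W^{(-\lambda_0)}(u^*,y_0)=0$ and $W^{(-\lambda_0)}(v,y_0)>0$ for every $v\in(u^*,y_0)$.

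The central tool to derive the contradiction is the identity of Proposition \ref{prop:exitProblem1},
\[
W^{(q)}(v,y_0)=W^{(q)}(u^*,y_0)\,\bE_v[\mathrm{e}^{-q\tau_{u^*}^-},\tau_{u^*}^-<\tau_{y_0}^+], \qquad v\in(u^*,y_0),
\]
which extends analytically (Proposition \ref{prop:resolventIdentity} together with Proposition \ref{prop:positivityOfW}(ii)) to $\Re q>-\lambda_0^{[u^*,y_0]}$. If one can establish the strict inequality $\lambda_0<\lambda_0^{[u^*,y_0]}$, then evaluation at $q=-\lambda_0$ is legitimate; the factor $\bE_v[\mathrm{e}^{\lambda_0\tau_{u^*}^-},\tau_{u^*}^-<\tau_{y_0}^+]$ is then finite and strictly positive (Proposition \ref{prop:exitProblem1} at $q=0$ combined with $W>0$ from Proposition \ref{Prop202} gives $\bP_v[\tau_{u^*}^-<\tau_{y_0}^+]>0$), and the identity therefore forces $W^{(-\lambda_0)}(v,y_0)=0$ for every $v\in(u^*,y_0)$, contradicting the choice of $u^*$.

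The core task, and the main obstacle, is thus to prove $\lambda_0<\lambda_0^{[u^*,y_0]}$ under $\mathrm{(I)}_x$ and the inaccessibility of $\ell$. The intuition is that $\lambda_0$ is produced by the slow round trips of $X$ above $y_0$, while the event $\{\tau_{u^*}^-<\tau_{y_0}^+\}$ bars all such excursions, so the corresponding Laplace transform should have a strictly larger abscissa of convergence. To make this quantitative I would use the excursion theory of Section \ref{section:localTimeExcursionMeasure}. Since $y_0<\ell$ and $\mathrm{(I)}_x$ holds, pick $z\in(y_0,\ell)$; then $\bP_{y_0}[\tau_z^+<\tau_x]>0$, and the argument used in Proposition \ref{prop:finiteLambda0} exhibits the lower bound $\bP_{y_0}[\tau_x^->t]\geq c\,\mathrm{e}^{t\log\delta/r}$, showing that the upward round trips are exactly what forces $\lambda_0<\infty$. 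On $\{\tau_{u^*}^-<\tau_{y_0}^+\}$ these upward excursions are forbidden, so decomposing $\tau_{u^*}^-$ along the excursions of $X$ from $y_0$ and comparing the two Laplace transforms through $n_{y_0}$ (the excursions crossing $y_0$ carry strictly positive $n_{y_0}$-mass by $\mathrm{(I)}_x$) yields a quantitative gap $\lambda_0^{[u^*,y_0]}\geq\lambda_0+\eta$ for some $\eta>0$ controlled by that mass.

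With the strict inequality in hand the contradiction announced above closes, and therefore $W^{(-\lambda_0^{[x]})}(x,y)>0$ for every $y\in(x,\ell)$.
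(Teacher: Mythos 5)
Your proof is incomplete: the step you yourself flag as ``the core task, and the main obstacle''---establishing the strict inequality $\lambda_{0}^{[x]} < \lambda_{0}^{[u^*,y_0]}$---is only sketched heuristically via an excursion decomposition that is never carried out, and without it the contradiction does not close. Moreover, this missing inequality is not an auxiliary estimate but is logically equivalent to the very statement you are trying to prove at the point $(u^*,y_0)$: by Proposition~\ref{prop:positivityOfW}(ii) together with the characterization of $\lambda_{0}^{[u^*,y_0]}$ as the smallest zero of $\lambda\mapsto W^{(-\lambda)}(u^*,y_0)$ (cf.\ Corollary~\ref{cor:kappaZeroPositivity}), one has $\lambda_{0}^{[x]} < \lambda_{0}^{[u^*,y_0]}$ if and only if $W^{(-\lambda_{0}^{[x]})}(u^*,y_0) > 0$. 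Since your $u^*$ is \emph{defined} to be a zero of $W^{(-\lambda_{0}^{[x]})}(\cdot,y_0)$, the hypothesis under which you are working forces $\lambda_{0}^{[x]} = \lambda_{0}^{[u^*,y_0]}$; to escape this you would need an independent, fully quantitative proof of the strict inequality, which the excursion sketch does not provide.

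The paper sidesteps this difficulty entirely by a direct computation rather than a contradiction argument. Starting from the resolvent identity (Proposition~\ref{prop:resolventIdentity}) with $q=-\lambda_{0}^{[x]}$, $r=0$, one substitutes the normalization from Corollary~\ref{cor:ScaleDefinesFiniteMeasure-c}, $\lambda_{0}^{[x]}\int_{(x,\ell)}W^{(-\lambda_{0}^{[x]})}(x,u)\,m(du)=1-h(\lambda_{0}^{[x]};x)$, to rewrite $W(x,y)$, and then uses Proposition~\ref{prop:exitProblem1} at $q=0$ to replace $W(u,y)$ by $W(x,y)\,\bP_{u}[\tau_{x}^{-}<\tau_{y}^{+}]$. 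This expresses $W^{(-\lambda_{0}^{[x]})}(x,y)$ as an integral of the non-negative integrand $W^{(-\lambda_{0}^{[x]})}(x,u)\bigl(1-(1-h(\lambda_{0}^{[x]};x))\bP_{u}[\tau_{x}^{-}<\tau_{y}^{+}]\bigr)$, which is strictly positive on the set of positive $m$-measure where $W^{(-\lambda_{0}^{[x]})}(x,\cdot)>0$ (furnished by Corollary~\ref{cor:ScaleDefinesFiniteMeasure-c}), because $\mathrm{(I)}_x$ gives $\bP_{u}[\tau_{x}^{-}<\tau_{y}^{+}]<1$. No analytic-continuation argument at the critical abscissa is needed; the positivity of the integral gives the conclusion immediately. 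You may wish to note that your contradiction scheme does appear in the paper---in the proof of Proposition~\ref{prop:positivityOfW}(ii)---but there it works precisely because one works at $q'>-\lambda_{0}^{[x,z]}$ strictly inside the half-plane of convergence, whereas in the present corollary you are exactly at the boundary value $-\lambda_{0}^{[x]}$ and the analytic continuation breaks down.
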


\begin{proof}
	From Proposition \ref{prop:resolventIdentity} and Corollary \ref{cor:ScaleDefinesFiniteMeasure-c},
	we have for $y \in (x,\ell)$
	\begin{align}
		&W^{(-\lambda_{0}^{[x]})}(x,y) \label{} \\
		= &W(x,y) - \lambda_{0}^{[x]} \int_{(x,y)} W^{(-\lambda_{0}^{[x]})}(x,u)W(u,y) m(du) \label{} \\
		= &\frac{\lambda_{0}^{[x]}W(x,y)}{1 - h(\lambda_{0}^{[x]};x)}\int_{(x,\ell)} W^{(-\lambda_{0}^{[x]})}(x,u) m(du) - \lambda_{0}^{[x]} \int_{(x,y)} W^{(-\lambda_{0}^{[x]})}(x,u)W(u,y) m(du) \label{} \\
		= &\frac{\lambda_{0}^{[x]} W(x,y)}{1 - h(\lambda_{0}^{[x]};x)} \int_{(x,\ell)} W^{(-\lambda_{0}^{[x]})}(x,u)
		\left( 1 - (1 - h(\lambda_{0}^{[x]};x)) \bP_{u}[\tau_{x} < \tau_{y}^{+}] \right) m(du), \label{eq57} 
	\end{align}
	where we used Proposition \ref{prop:exitProblem1} in the last equality.
	Since $\bP_{u}[\tau_{x} < \tau_{y}^{+}] < 1 \ (u \in (x,y))$ from $\mathrm{(I)}_{x}$, and the function $y \mapsto W^{(-\lambda_{0}^{[x]})}(x,y)$ is positive on some measurable set $A \subset (x,\ell)$ with $m(A) > 0$ from Corollary \ref{cor:ScaleDefinesFiniteMeasure-c},
	we see that \eqref{eq57} is strictly positive.
\end{proof}

The following lemma shows that existence of a QSD is determined whether $h^{(q)}(\lambda;0)$ is zero or not, {whose proof is essentially the same as \cite[Lemma 4.5]{QSD_downward_skip-free}, and we omit it.}

\begin{Lem}[{cf. \cite[Lemma 4.5]{QSD_downward_skip-free}}] \label{lem:scaleFuncGivesQSD-entrance-c}
	Suppose $\lambda_{0} > 0$.
	For $\lambda \in (0,\lambda_{0}]$, the (sub)probability measure
	\begin{align}
		\nu_{\lambda}(dx) := \lambda W^{(-\lambda)}(0,x) m(dx) \quad (x \in I) \label{}
	\end{align}
	is a QSD with $\bP_{\nu_{\lambda}}[\tau_{0} > t] = \mathrm{e}^{-\lambda t} \ (t \geq 0)$ if and only if $h^{(q)}(\lambda;0) = 0$ for some $q \geq 0$.
	It is also equivalent to 
	\begin{align}
		\nu_{\lambda}(0,\ell) = 1. \label{eq19-c}
	\end{align}
\end{Lem}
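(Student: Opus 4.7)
The plan is to match Laplace transforms in $t$ to verify the QSD identity, using the potential density formula \eqref{potentialDensityOneside-c} together with the resolvent identity \eqref{eq78}.

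The first step links the three conditions. Corollary \ref{cor:ScaleDefinesFiniteMeasure-c} applied at $x=0$ gives $\nu_{\lambda}(0,\ell) = 1 - h(\lambda;0)$, so $\nu_{\lambda}(0,\ell)=1$ is equivalent to $h^{(0)}(\lambda;0) = 0$; property (v) of Proposition \ref{prop:nonnegativityImpliesIntegrability-entrance-c} upgrades this to the equivalence with $h^{(q)}(\lambda;0) = 0$ for some (equivalently every) $q \geq 0$. The ``only if'' direction of the QSD characterization is then immediate, since any QSD is a probability measure and hence forces $\nu_{\lambda}(0,\ell) = 1$.

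For the nontrivial ``if'' direction, assume $h^{(q)}(\lambda;0) = 0$ for every $q \geq 0$. For $q > 0$, Fubini's theorem together with \eqref{potentialDensityOneside-c} yields
\begin{align*}
\int_{0}^{\infty} e^{-qt}\, \bP_{\nu_{\lambda}}[X_{t}\in dy,\ \tau_{0}>t]\,dt = \int_{I} \nu_{\lambda}(dx)\, r^{(q)}(x,y)\, m(dy).
\end{align*}
Expanding $r^{(q)}(x,y) = g^{(q)}(x) W^{(q)}(0,y) - W^{(q)}(x,y)$, the resolvent identity \eqref{eq78} and \eqref{h-funcRepresentation-c} with $x=0$ respectively give
\begin{align*}
\int_{(0,\ell)} W^{(-\lambda)}(0,x)\, W^{(q)}(x,y)\, m(dx) &= \frac{W^{(q)}(0,y) - W^{(-\lambda)}(0,y)}{q+\lambda}, \\
\int_{(0,\ell)} W^{(-\lambda)}(0,x)\, g^{(q)}(x)\, m(dx) &= \frac{1 - h^{(q)}(\lambda;0)}{q+\lambda}.
\end{align*}
Substituting these and using $h^{(q)}(\lambda;0) = 0$, the right-hand side of the preceding Laplace-transform display collapses to $\nu_{\lambda}(dy)/(q+\lambda)$, which equals $\int_{0}^{\infty} e^{-(q+\lambda)t}\,dt \cdot \nu_{\lambda}(dy)$. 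Uniqueness of Laplace transforms applied to $t \mapsto \bP_{\nu_{\lambda}}[X_{t}\in B,\ \tau_{0}>t]$ for each Borel $B \subset I$, combined with right-continuity of $X$, yields $\bP_{\nu_{\lambda}}[X_{t}\in dy,\ \tau_{0}>t] = e^{-\lambda t}\nu_{\lambda}(dy)$ for every $t \geq 0$; integrating in $y$ and using that $\nu_{\lambda}$ is a probability measure gives $\bP_{\nu_{\lambda}}[\tau_{0}>t] = e^{-\lambda t}$, confirming the QSD property.

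The principal obstacle is the bookkeeping for the Fubini interchange: one should apply Tonelli first to each of the two non-negative contributions of $r^{(q)}$, using the bound \eqref{eq44-c} and Proposition \ref{prop:integrabilityOfW} to secure absolute integrability before reassembling the signed combination. A secondary delicate point is that property (v) of Proposition \ref{prop:nonnegativityImpliesIntegrability-entrance-c} is stated for $\lambda < \lambda_{0}^{[0]}$; at the endpoint $\lambda = \lambda_{0}$ one must extend the equivalence either by a monotonicity argument in $\lambda$ or by the analyticity in $q$ provided by property (iv).
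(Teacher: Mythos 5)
Your proposal is correct and follows what is almost certainly the intended route, namely the computation in \cite[Lemma 4.5]{QSD_downward_skip-free} that the paper refers to and omits: combine the potential density formula \eqref{potentialDensityOneside-c} with the resolvent identity \eqref{eq78} and the representation \eqref{h-funcRepresentation-c}, reduce to the vanishing of $h^{(q)}(\lambda;0)$, match Laplace transforms in $t$, and finish with a regularity upgrade. The Tonelli bookkeeping you flag is exactly the point that needs care, and the bound \eqref{eq44-c} and Proposition \ref{prop:commutativity} do secure it. Two small remarks on precision. First, the ``monotonicity argument in $\lambda$'' you offer as an alternative for the endpoint $\lambda=\lambda_{0}$ does not actually run in the needed direction: property (ii) gives $h^{(q_{0})}(\lambda';0)\ge h^{(q_{0})}(\lambda_{0};0)=0$ for $\lambda'<\lambda_{0}$, which is vacuous; the workable route is your other suggestion, namely that the proof of property (iv) only uses the bound \eqref{eq44-c}, which is stated up to and including $\lambda=\lambda_{0}^{[x]}$, so $q\mapsto h^{(q)}(\lambda_{0};0)$ is real-analytic on $(-\lambda_{0},\infty)$, and a non-negative, non-increasing analytic function that vanishes at some $q_{0}$ must vanish identically. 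Second, the step ``uniqueness of Laplace transforms $+$ right-continuity of $X$'' deserves one more sentence: uniqueness gives $\bP_{\nu_{\lambda}}[X_{t}\in B,\tau_{0}>t]=\mathrm{e}^{-\lambda t}\nu_{\lambda}(B)$ for a.e.\ $t$, and since $t\mapsto f(X_{t})\mathbf{1}\{\tau_{0}>t\}$ is right-continuous a.s.\ for bounded continuous $f$, dominated convergence yields right-continuity of the left side, which together with continuity of the right side upgrades the identity to all $t\ge 0$ (a monotone-class argument then passes from continuous $f$ to all Borel $B$). With these two points made explicit, the proof is complete.
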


From Lemmas \ref{lem:qsdCharacterization-c} and \ref{lem:scaleFuncGivesQSD-entrance-c},
to see the existence of QSDs, it is enough to consider the zeros of the function $h$.
The following lemma shows that the non-entrance condition implies the function $h$ is the zero function.

\begin{Lem}[{cf.\cite[Lemma 4.6]{QSD_downward_skip-free}}] \label{lem:BdryClassificationEquivalence-c}
	Suppose $\lambda_{0} > 0$. 
	The boundary $\ell$ is non-entrance if and only if $h(\lambda;x) = 0$ for some (or equivalently, every) $x \in I_{<\ell}$ and some (or equivalently, every) $\lambda \in (0,\lambda_{0}^{[x]})$.
\end{Lem}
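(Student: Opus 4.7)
My plan is to establish the equivalence first at $x = 0$ and then to propagate the result to arbitrary $x \in I_{<\ell}$ via a strong Markov factorization. The key tool is a reciprocal identity that I will derive from the resolvent identity \eqref{eq78}, together with the explicit formula for $h(\lambda; 0)$ already given in Proposition~\ref{prop:nonnegativityImpliesIntegrability-entrance-c}.

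The main technical step is to derive
\begin{align}
	\frac{1}{h(\lambda; 0)} - 1 = \lambda \int_{(0, \ell)} W(0, v)\, g^{(-\lambda)}(0, v)\, m(dv) \quad \text{in } [0, \infty].
\end{align}
I would apply \eqref{eq78} at $q = 0$ and $r = -\lambda$ to obtain $W(0, y) - W^{(-\lambda)}(0, y) = \lambda \int_{(0, y)} W(0, v) W^{(-\lambda)}(v, y)\, m(dv)$, divide through by $W^{(-\lambda)}(0, y) > 0$ (positivity from $\lambda < \lambda_0 \leq \lambda_0^{[0, y]}$ via Proposition~\ref{prop:positivityOfW}(ii)), recognize $W^{(-\lambda)}(v, y) / W^{(-\lambda)}(0, y) = \bE_v[\mathrm{e}^{\lambda \tau_0^-}, \tau_0^- < \tau_y^+]$ via the analytic extension of Proposition~\ref{prop:exitProblem1} to $q = -\lambda$, and take $y \to \ell$. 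Monotone convergence applies on the right since $\{\tau_0^- < \tau_y^+\}$ increases to the full space by \eqref{inaccessible} and \eqref{certainlyHitZero}, while the left tends to $1/h(\lambda; 0) - 1$ by the definition of $h$.

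Given the identity, ``non-entrance $\Rightarrow h(\lambda; 0) = 0$'' is immediate: since $g^{(-\lambda)} \geq 1$ (from $\tau_0^- \geq 0$), the right-hand side is at least $\lambda \int_{(0, \ell)} W(0, v)\, m(dv) = \infty$. For the converse, I would instead use the $q = 0$ case of Proposition~\ref{prop:nonnegativityImpliesIntegrability-entrance-c}, namely $h(\lambda; 0) = 1 - \lambda \int_{(0, \ell)} W^{(-\lambda)}(0, u)\, m(du)$; together with $W^{(-\lambda)}(0, u) \leq W(0, u)$ from Proposition~\ref{prop:positivityOfW}(iii), entrance gives $h(\lambda; 0) \geq 1 - \lambda M > 0$ for $\lambda \in (0, 1/M) \cap (0, \lambda_0)$, where $M := \int_{(0, \ell)} W(0, u)\, m(du) < \infty$, and Proposition~\ref{prop:nonnegativityImpliesIntegrability-entrance-c}(v) extends this to every $\lambda \in (0, \lambda_0]$.

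Finally, to cover arbitrary $x \in I_{<\ell}$, I would factor $W^{(-\lambda)}(x', y)/W(x', y)$ as the product of three ratios and pass to $y \to \ell$ using \eqref{inaccessible}, \eqref{certainlyHitZero} and the analytic extension of Proposition~\ref{prop:exitProblem1}, yielding $h(\lambda; x') = \bE_{x'}[\mathrm{e}^{\lambda \tau_x^-}] \cdot h(\lambda; x)$ for $0 \leq x < x' < \ell$ and $\lambda \in (0, \lambda_0^{[x]})$. The inequality $\tau_x^- \geq \tau_{x'}^-$ shows that $\lambda_0^{[\cdot]}$ is non-decreasing, so $\bE_{x'}[\mathrm{e}^{\lambda \tau_x^-}] \in [1, \infty)$ throughout this range, whence $h(\lambda; x) = 0 \iff h(\lambda; x') = 0$; combined with Proposition~\ref{prop:nonnegativityImpliesIntegrability-entrance-c}(v), this yields the equivalence across all admissible $x$ and $\lambda$. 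The main obstacle is the careful justification of the limit in the reciprocal identity, which the monotonicity of the relevant events renders routine.
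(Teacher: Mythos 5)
Your proposal is correct, but it takes a genuinely different route from the paper's. The paper proves the implication ``non-entrance $\Rightarrow h(\lambda;x)=0$'' by contradiction, directly for arbitrary $x$: if $h(\lambda;x)>0$, then $W^{(-\lambda)}(x,y)/W(x,y)$ and $W(x,y)/W(0,y)$ both have positive finite limits as $y\to\ell$, so the $m$-integrability of $W^{(-\lambda)}(x,\cdot)$ (Corollary~\ref{cor:ScaleDefinesFiniteMeasure-c}) forces $\int_{(0,\ell)}W(0,u)\,m(du)<\infty$; and it proves the converse by starting from $h(\lambda;x)=0$, invoking $\int_{(x,\ell)}W^{(-\lambda)}(x,u)\,m(du)=1/\lambda$ from Lemma~\ref{lem:scaleFuncGivesQSD-entrance-c}, bounding by $\int W(0,u)\,m(du)$, and letting $\lambda\to0^+$. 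You instead derive a ``reciprocal identity'' $1/h(\lambda;0)-1=\lambda\int_{(0,\ell)}W(0,v)\,g^{(-\lambda)}(0,v)\,m(dv)$, obtained by dividing \eqref{eq78} by $W^{(-\lambda)}(0,y)$ rather than by $W(0,y)$, which makes ``non-entrance $\Rightarrow h(\lambda;0)=0$'' immediate from $g^{(-\lambda)}\geq1$; for the other direction you use the estimate $h(\lambda;0)\geq1-\lambda M>0$ for small $\lambda$ directly from \eqref{h-funcRepresentation-c} rather than a limiting argument in $\lambda$; and you propagate to general $x$ via the multiplicative factorization $h(\lambda;x')=\bE_{x'}[\mathrm{e}^{\lambda\tau_x^-}]\,h(\lambda;x)$ together with Proposition~\ref{prop:nonnegativityImpliesIntegrability-entrance-c}(v). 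Your route is longer --- the paper reaches arbitrary $x$ with no propagation step and no new identity --- but the reciprocal identity gives a cleaner and more quantitative hold on one direction, and your multiplicative factorization of $h$ across base points is itself a nice observation that the paper does not record explicitly.
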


\begin{proof}
	Suppose the boundary $\ell$ is non-entrance.
	Let $x \in I_{<\ell}$ and take $\lambda \in (0,\lambda_{0}^{[x]})$.
	If $h(\lambda;x) \neq 0$, it follows from \eqref{defOfH-c} for $q = 0$
	\begin{align}
		h(\lambda;x) = \lim_{y \to \ell}\frac{W^{(-\lambda)}(x,y)}{W(x,y)} \in (0,1). \label{}
	\end{align}
	Since it holds $\int_{(0,\ell)} W^{(-\lambda)}(x,y) m(du) < \infty$ and $\lim_{y \to \ell} W(x,y) / W(0,y) = 1$ from Corollary \ref{cor:ScaleDefinesFiniteMeasure-c} and \eqref{inaccessible}, respectively,
	it follows $\int_{(0,\ell)} W(0,u) m(du) < \infty$ and it contradicts to the non-entrance condition.

	For $x \in I_{<\ell}$ and $\lambda' \in (0,\lambda_{0}^{[x]})$, suppose $h(\lambda';x) = 0$.
	From Proposition \ref{prop:nonnegativityImpliesIntegrability-entrance-c} (v) it also holds for every $\lambda \in (0,\lambda_{0}^{[x]}]$.
	From Proposition \ref{prop:resolventIdentity}, it holds $W^{(-\lambda)}(x,u) < W(x,u)$ for $u \in I_{>x}$.
	From Lemma \ref{lem:scaleFuncGivesQSD-entrance-c} we have
	\begin{align}
		\frac{1}{\lambda} = \int_{(x,\ell)} W^{(-\lambda)}(x,u) m(du) \leq \int_{(x,\ell)} W(x,u) m(du) \leq \int_{(0,\ell)} W(0,u) m(du) . \label{eq36-c}
	\end{align}
	Taking limit as $\lambda \to 0+$, we obtain the non-entrance condition.
\end{proof}

{
Combining Lemmas \ref{lem:qsdCharacterization-c}, \ref{lem:scaleFuncGivesQSD-entrance-c} and \ref{lem:BdryClassificationEquivalence-c},
we easily see Theorem \ref{thm:existenceOfQSD} (ii) follows.
We omit the proof.
We focus on the proof of Theorem \ref{thm:existenceOfQSD} (i), that is, the case where $\ell$ is entrance.}
Under the entrance condition, the function $W^{(q)}(x,\cdot)$ is $m$-integrable for every $x \in I_{<\ell}$.

\begin{Lem} [{cf. \cite[Lemma 4.7]{QSD_downward_skip-free}}]\label{lem:analyticExtentionOfh}
	Suppose the boundary $\ell$ is entrance.
	Then it holds for every $R > 0$ and $x \in I_{<\ell}$
	\begin{align}
		\sup_{\zeta \in \bC, |\zeta| \leq R} \int_{(x,\ell)} |W^{(\zeta)}(x,u)| m(du) < \infty. \label{eq21}
	\end{align}
	In addition, the limit $Z^{(q)}(x) := \lim_{y \to \ell}Z^{(q)}(x,y) \ (q \in \bC, x \in I)$
	exists and
	\begin{align}
		Z^{(q)}(x) = 1 + q \int_{(x,\ell)} W^{(q)}(x,u) m(du) \quad (q \in \bC, \ x \in I). \label{eq50-c}
	\end{align}
	Hence, the function $q \mapsto Z^{(q)}(x) \ (x \in I)$ is an entire function.
	Moreover, it holds
	\begin{align}
		Z^{(q)}(x) > 0 \quad (x \in I, \ q > -\lambda_{0}^{[x]}) \label{eq70}
	\end{align}
	and
	\begin{align}
		g^{(q)}(x,y) = \frac{Z^{(q)}(y)}{Z^{(q)}(x)} \quad (q > -\lambda_{0}^{[x]},\  y \in I_{\geq x}), \label{eq53}
	\end{align}
	and thus the function $q \mapsto g^{(q)}(x,y) \ (y \in I_{\geq x})$ is analytically extended to a meromorphic function on $\bC$. %{[No. The RHS is meromorphic on $\bC$ (because $Z^{(q)}$ is an entire function) and thus we can extend $q \mapsto g^{(q)}(x,y)$ to a meromorphic function on $\bC$.]}
\end{Lem}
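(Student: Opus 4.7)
The plan is to combine the series expansion of Theorem \ref{thm:WequalM} with the entrance hypothesis, and then invoke the complementary exit identities of Propositions \ref{prop:exitProblem1} and \ref{prop:exitProblemZ}. First, I would derive the uniform bound \eqref{eq21}. Starting from \eqref{eq03} and the monotonicity $W(x,\cdot) \leq W(0,\cdot)$ noted before Proposition \ref{prop:integrabilityOfW}, one has $|W^{(\zeta)}(x,u)| \leq W(x,u)\,\mathrm{e}^{|\zeta|F_x}$ with $F_x := \int_{(x,\ell)}W(x,v)m(dv) \leq \int_{(0,\ell)}W(0,v)m(dv) < \infty$; integrating gives \eqref{eq21}.

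From this uniform bound and \eqref{scaleFuncZ}, dominated convergence as $y\to\ell$ yields the existence of $Z^{(q)}(x)$ and the representation \eqref{eq50-c}. Because the bound is uniform on compacta of $\bC$, the entire functions $q\mapsto Z^{(q)}(x,y)$ converge locally uniformly, so $q\mapsto Z^{(q)}(x)$ is entire (Weierstrass/Morera). The positivity \eqref{eq70} then splits by sign: for $q\geq 0$, \eqref{scaleFuncZ} gives $Z^{(q)}(x)\geq 1$; for $q=-\lambda$ with $\lambda\in(0,\lambda_{0}^{[x]})$, Corollary \ref{cor:ScaleDefinesFiniteMeasure-c} identifies $Z^{(-\lambda)}(x)=h(\lambda;x)$, and the entrance hypothesis combined with Lemma \ref{lem:BdryClassificationEquivalence-c} forces $h(\lambda;x)>0$.

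For the identification \eqref{eq53} I first handle $q\geq 0$. Letting $z\to\ell$ in Propositions \ref{prop:exitProblem1} and \ref{prop:exitProblemZ}, the inaccessibility \eqref{inaccessible} together with \eqref{certainlyHitZero} force $\bP_y[\tau_z^+<\tau_x^-]\leq \bP_y[\tau_z^+<\tau_0]\to 0$, so $\bE_y[\mathrm{e}^{-q\tau_z^+},\tau_z^+<\tau_x^-]\to 0$ (using $\mathrm{e}^{-q\tau_z^+}\leq 1$) while $\bE_y[\mathrm{e}^{-q\tau_x^-},\tau_x^-<\tau_z^+]\to g^{(q)}(x,y)$ by monotone convergence. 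Taking limits on the right-hand sides and using the existence of $Z^{(q)}(x), Z^{(q)}(y)$ from the second paragraph, Proposition \ref{prop:exitProblemZ} collapses to $Z^{(q)}(y)=g^{(q)}(x,y)\,Z^{(q)}(x)$; dividing by $Z^{(q)}(x)\geq 1$ gives \eqref{eq53} for $q\geq 0$. To extend to $q\in(-\lambda_{0}^{[x]},0)$, both sides are holomorphic on a complex neighborhood of the real interval $(-\lambda_{0}^{[x]},\infty)$ (the left-hand side since $\lambda_{0}^{[x]}\leq \lambda_{0}^{[x]}(y)$ by the discussion around \eqref{eq503a}; the right-hand side because $Z^{(q)}$ is entire and strictly positive on that real interval by the positivity already proved), so the identity theorem closes the case. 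The claimed meromorphy of $q\mapsto g^{(q)}(x,y)$ on $\bC$ is then immediate as a ratio of entire functions.

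The main technical obstacle is the $z\to\ell$ limit in the second exit identity when $q<0$: the bound $\mathrm{e}^{-q\tau_z^+}\leq 1$ fails and the direct probabilistic vanishing breaks down, which is why one must detour through the analytic extension supplied by the entire property of $Z^{(q)}$ and its strict positivity on $(-\lambda_{0}^{[x]},\infty)$. A secondary care point is aligning the two domains of analyticity in $q$, for which the inequality $\lambda_{0}^{[x]}\leq \lambda_{0}^{[x]}(y)$ must be invoked explicitly.
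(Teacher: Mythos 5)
Your proof is correct and, as far as can be told (the paper defers the details to the analogous Lemma 4.7 in the downward skip-free reference), it follows the intended route: the series bound \eqref{eq03} together with entrance gives \eqref{eq21}, dominated convergence gives \eqref{eq50-c} and entireness, and \eqref{eq53} is obtained for $q\geq 0$ by sending $z\to\ell$ in Propositions \ref{prop:exitProblem1} and \ref{prop:exitProblemZ} and then extended by the identity theorem. One small care point in your positivity step: Lemma \ref{lem:BdryClassificationEquivalence-c} is formally stated under the hypothesis $\lambda_0>0$, which is only established later (Corollary \ref{cor:lambdaZeroPositivity}) via the present lemma; however, the implication you actually use — that $h(\lambda;x)=0$ for some $\lambda\in(0,\lambda_0^{[x]})$ forces non-entrance — only requires $\lambda_0^{[x]}>0$ and can be read off directly from Proposition \ref{prop:nonnegativityImpliesIntegrability-entrance-c}(v) and Corollary \ref{cor:ScaleDefinesFiniteMeasure-c} (giving $1/\lambda=\int_{(x,\ell)}W^{(-\lambda)}(x,u)m(du)\leq\int_{(0,\ell)}W(0,u)m(du)$ for all small $\lambda>0$), so there is no circularity once the citation is unfolded; alternatively, one can avoid $h$ altogether by first proving the unnormalized identity $g^{(q)}(x,y)Z^{(q)}(x)=Z^{(q)}(y)$ on $\{\Re q>-\lambda_0^{[x]}\}$ (both sides entire-times-holomorphic there) and deducing $Z^{(q)}(x)>0$ from $\lim_{y\to\ell}Z^{(q)}(y)=1$ together with $g^{(q)}(x,y)>0$.
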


{
The proof of Lemma \ref{lem:analyticExtentionOfh} is almost the same as \cite[Lemma 4.7]{QSD_downward_skip-free} and we omit it.
}

% \begin{proof}
% 	From \eqref{eq03} and the entrance condition,
% 	we easily see \eqref{eq21} and \eqref{eq50-c} hold.
% 	From Proposition \ref{prop:exitProblemZ}, it holds for $x,y,z \in I_{\geq 0}$ with $x < y < z$ and $q \geq 0$
% 	\begin{align}
% 		\frac{Z^{(q)}(y,z)}{Z^{(q)}(x,z)} = \frac{W^{(q)}(y,z)}{W^{(q)}(x,z)} + \frac{\bE_{y}[\mathrm{e}^{-q\tau_{z}^{+}},\tau_{z}^{+} < \tau_{x}]}{Z^{(q)}(x,z)}. \label{}
% 	\end{align}
% 	From \eqref{inaccessible}, we have
% 	\begin{align}
% 		g^{(q)}(x,y) = \lim_{z \to \ell} \frac{W^{(q)}(y,z)}{W^{(q)}(x,z)} = \lim_{z \to \ell} \frac{Z^{(q)}(y,z)}{Z^{(q)}(x,z)} = \frac{Z^{(q)}(y)}{Z^{(q)}(x)}. \label{eq55}
% 	\end{align}
% 	Let $x \in I_{< \ell}$ and suppose $\lambda_{0}^{[x]} > 0$.
% 	By Corollary \ref{cor:ScaleDefinesFiniteMeasure-c} and Lemma \ref{lem:BdryClassificationEquivalence-c} we see $Z^{(-\lambda)}(x) = h(\lambda;x) > 0$ for $\lambda \in (0,\lambda_{0}^{[x]})$.
% 	Since the both sides of \eqref{eq55} is analytic for $q \in (-\lambda_{0}^{[x]},0)$, we obtain \eqref{eq53} by the identity theorem.
% \end{proof}

\begin{Rem}
	When $\ell \in I$, it clearly holds $Z^{(q)}(x) = Z^{(q)}(x,\ell)$.
\end{Rem}

The following corollary is immediate from \eqref{eq53}.

\begin{Cor} \label{cor:strictlyIncreasingZ}
	Assume the boundary $\ell$ is entrance.
	For $x \in I_{<\ell}$ and $q \in  (-\lambda_{0}^{[x]},0)$, the function $I_{>x}  \ni y \mapsto Z^{(q)}(y) \in (0,1)$ is strictly increasing.
\end{Cor}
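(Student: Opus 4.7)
The plan is to derive the corollary directly from the identity \eqref{eq53} of Lemma \ref{lem:analyticExtentionOfh}, namely $g^{(q)}(y_1,y_2) = Z^{(q)}(y_2)/Z^{(q)}(y_1)$, applied with $y_1$ in the role of $x$. Before invoking the lemma at a general base point $y > x$, I first need to verify the inequality $\lambda_0^{[x]} \leq \lambda_0^{[y]}$, so that $q > -\lambda_0^{[x]}$ forces $q > -\lambda_0^{[y]}$ and the formulas \eqref{eq70}, \eqref{eq50-c} and \eqref{eq53} remain available with $y$ in place of $x$. This inequality follows from the strong Markov property at $\tau_y^-$, combined with the absence of negative jumps (so that $X_{\tau_y^-}=y$ on $\{\tau_y^- < \infty\}$): for $u > y$,
\begin{align}
\bE_u[\mathrm{e}^{\lambda\tau_x^-}] = \bE_u[\mathrm{e}^{\lambda\tau_y^-}]\,\bE_y[\mathrm{e}^{\lambda\tau_x^-}],
\end{align}
so finiteness of the left-hand side for all $u > x$ forces finiteness of each factor on the right for all $u > y$.

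Next I would check that $Z^{(q)}(y) \in (0,1)$ for every $y \in I_{>x}$. Positivity is immediate from \eqref{eq70} and the previous paragraph. For the strict upper bound, \eqref{eq50-c} writes
\begin{align}
Z^{(q)}(y) = 1 + q\int_{(y,\ell)} W^{(q)}(y,u)\,m(du),
\end{align}
where the integrand is strictly positive on a set of positive $m$-measure by Proposition \ref{prop:positivityOfW}(ii) together with Proposition \ref{Prop101}, and the integral is finite by the entrance assumption via Lemma \ref{lem:analyticExtentionOfh}. Since $q < 0$, the correction term is strictly negative, giving $Z^{(q)}(y) < 1$.

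Strict monotonicity is then the core of the argument. Given $y_1 < y_2$ in $I_{>x}$, applying \eqref{eq53} with $y_1$ replacing $x$ yields
\begin{align}
Z^{(q)}(y_2) = \bE_{y_2}\bigl[\mathrm{e}^{-q\tau_{y_1}^-}\bigr]\,Z^{(q)}(y_1).
\end{align}
Because $X$ is right-continuous with $X_0 = y_2 > y_1$, one has $\tau_{y_1}^- > 0$ $\bP_{y_2}$-a.s.; combined with $q < 0$, this gives $\mathrm{e}^{-q\tau_{y_1}^-} > 1$ almost surely, hence the prefactor is strictly greater than $1$. Together with $Z^{(q)}(y_1) > 0$ this yields $Z^{(q)}(y_2) > Z^{(q)}(y_1)$, as required.

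The only real wrinkle is the preparatory step $\lambda_0^{[x]} \leq \lambda_0^{[y]}$ that legitimizes applying Lemma \ref{lem:analyticExtentionOfh} at the varying base point; once this is in hand, the entire corollary is a one-line consequence of the factorization \eqref{eq53}, and I expect no further technical obstacles.
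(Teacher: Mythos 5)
Your proof is correct and takes essentially the paper's own route: the paper declares the corollary ``immediate from \eqref{eq53}'' and leaves the details to the reader, and your argument supplies exactly those details—the factorization $Z^{(q)}(y_2)=\bE_{y_2}[\mathrm{e}^{-q\tau_{y_1}^-}]\,Z^{(q)}(y_1)$ with the strict prefactor inequality, plus the observation that $q>-\lambda_0^{[x]}$ implies $q>-\lambda_0^{[y_1]}$ (which you justify cleanly via the strong Markov property at $\tau_{y_1}^-$ and absence of negative jumps, so that the formulas \eqref{eq70}, \eqref{eq50-c}, \eqref{eq53} of Lemma \ref{lem:analyticExtentionOfh} are legitimately invoked at the shifted base point).
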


The following lemma shows that under the entrance condition, it always holds $\lambda_{0} > 0$.
{
We omit the proof since it is essentially the same as \cite[Lemma 4.9]{QSD_downward_skip-free}.
}

\begin{Lem} [cf. {\cite[Proposition 3]{BertoinQSD} and \cite[Lemma 4.9]{QSD_downward_skip-free}}]\label{lem:r-recurrence} 
	Assume the boundary $\ell$ is entrance. 
	Then for every $x,y \in I$ with $x < y$, it holds $\lambda_{0}^{[x]}(y) > 0$. 
	Suppose in addition that $\lambda_{0}^{[x]}(y) < \infty$.
	Then the function $\bC \ni q \mapsto Z^{(q)}(y) / Z^{(q)}(x)$ has a pole at $q = -\lambda_{0}^{[x]}(y)$,
	and it is one with the minimum absolute value.
	In particular, it holds
	\begin{align}
		\lim_{q \to -\lambda_{0}^{[x]}(y)+} g^{(q)}(x,y) = \infty \quad (y \in I_{>x}). \label{eq37}
	\end{align}
\end{Lem}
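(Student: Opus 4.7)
The plan is to prove the three assertions in turn: positivity of $\lambda_0^{[x]}(y)$, existence of a pole of $q \mapsto Z^{(q)}(y)/Z^{(q)}(x)$ at $q = -\lambda_0^{[x]}(y)$, and the blow-up of $g^{(q)}(x,y)$ at that point.

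For the positivity, I would first show that under the entrance condition $\sup_{y \in I_{>x}} \bE_y[\tau_x^-] < \infty$, and then iterate Markov's inequality to upgrade finite expectation to exponential integrability. Since $W(\cdot,u)$ is non-increasing in the first argument, the entrance condition gives $\int_{(x,\ell)} W(x,u) m(du) \leq \int_{(0,\ell)} W(0,u) m(du) < \infty$. Taking $q=0$ in Theorem \ref{Thm201}, integrating in $z$ against $m$, and using \eqref{certainlyHitZero} yields
\begin{align}
\bE_y[\tau_x^-] = \int_{(x,\ell)} (W(x,u) - W(y,u)) m(du) \leq M := \int_{(x,\ell)} W(x,u) m(du)
\end{align}
uniformly in $y \in I_{>x}$. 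Markov's inequality then gives $\bP_y[\tau_x^- > 2M] \leq 1/2$, and iterating via the strong Markov property (exactly as in Proposition \ref{prop:finiteLambda0}) we get $\bP_y[\tau_x^- > 2nM] \leq 2^{-n}$, hence $\bE_y[e^{\lambda \tau_x^-}] < \infty$ for all small $\lambda > 0$ and thus $\lambda_0^{[x]}(y) > 0$.

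For the pole assertion, the key tool is Landau's theorem on Laplace transforms of non-negative measures: the abscissa of convergence is always a singularity of the analytic continuation. By definition of $\lambda_0^{[x]}(y)$, the map $q \mapsto g^{(q)}(x,y) = \bE_y[e^{-q\tau_x^-}]$ is analytic on the open half-plane $\{\Re q > -\lambda_0^{[x]}(y)\}$, and by the identity theorem applied to \eqref{eq53} (which initially holds on $\{q > -\lambda_0^{[x]}\}$), it agrees there with the meromorphic function $Z^{(q)}(y)/Z^{(q)}(x)$. Landau's theorem forces $q = -\lambda_0^{[x]}(y)$ to be a singularity of $g^{(q)}(x,y)$; since the extension is meromorphic, this singularity can only be a pole. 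For the minimum-absolute-value claim, every $q$ with $|q| < \lambda_0^{[x]}(y)$ satisfies $\Re q \geq -|q| > -\lambda_0^{[x]}(y)$ and so lies in the region of analyticity, ruling out any pole of strictly smaller absolute value.

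Finally, \eqref{eq37} follows because $g^{(q)}(x,y)$ is positive real-valued on its real domain of analyticity, so approaching a pole from the right along the real axis forces it to $+\infty$; equivalently, the monotone convergence theorem gives $\lim_{q \to -\lambda_0^{[x]}(y)+} g^{(q)}(x,y) = \bE_y[e^{\lambda_0^{[x]}(y)\tau_x^-}]$, and a finite value here would make $Z^{(q)}(y)/Z^{(q)}(x)$ bounded near $-\lambda_0^{[x]}(y)$, contradicting the pole. The main obstacle is the clean invocation of Landau's theorem to rule out that the singularity is removable; this is a classical result for Laplace transforms of positive measures, and it applies directly here since $\bP_y \circ (\tau_x^-)^{-1}$ is a positive measure on $[0,\infty)$.
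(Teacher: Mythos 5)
Your proof is correct and follows essentially the same route as the references the paper defers to (Bertoin's Proposition~3 and its discrete analogue in the downward skip-free case): the uniform bound $\sup_{y}\bE_y[\tau_x^-]\leq\int_{(x,\ell)}W(x,u)m(du)<\infty$ combined with the iterated Markov-inequality argument gives $\lambda_0^{[x]}(y)>0$, and Landau's theorem on the real singularity of the Laplace transform of a non-negative measure, combined with the meromorphic continuation $g^{(q)}(x,y)=Z^{(q)}(y)/Z^{(q)}(x)$ from \eqref{eq53}, forces a pole at $q=-\lambda_0^{[x]}(y)$ and the divergence \eqref{eq37}. The only point worth flagging is cosmetic: the second (``equivalently\ldots'') justification of \eqref{eq37} is circular in tone, since once a pole is established the blow-up along the real axis is immediate, but this does not affect the validity of the argument.
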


% \begin{proof}
% 	Fix $x,y \in I$ with $x < y$ and suppose $\lambda_{0}^{[x]}(y) < \infty$.
% 	From the definition of $\lambda_{0}^{[x]}(y)$, 
% 	\begin{align}
% 		\text{the function } \frac{Z^{(q)}(y)}{Z^{(q)}(x)} \text{ does not have a pole at $q \in \{ \zeta \in \bC \mid \Re \zeta > -\lambda_{0}^{[x]}(y) \}$}. \label{eq35}
% 	\end{align}
% 	Suppose there exists a pole and let $\zeta'$ be the one with the minimum absolute value.
% 	Note that $|\zeta'| > 0$ since $Z^{(0)}(y) / Z^{(0)}(x) = 1$.
% 	We show $\zeta' = -\lambda_{0}^{[x]}(y)$.
% 	From \eqref{eq35}, we see that $\Re \zeta' \leq -\lambda_{0}^{[x]}(y)$, which implies $|\zeta'| \geq \lambda_{0}^{[x]}(y)$.
% 	Suppose $|\zeta'| > \lambda_{0}^{[x]}(y)$.
% 	Then the function $Z^{(q)}(y) / Z^{(q)}(x)$ has a power series expansion around $0$ for $q \in \bC$ with $|q| < |\zeta'|$ and
% 	the coefficient of $q^{n}$ is given by the $n$-th right-derivative of $\bE_{y}[\mathrm{e}^{-q \tau_{x}}]$ at $q = 0$ divided by $n!$, which is $(-1)^{n}\bE_{y}[(\tau_{x}^{-})^{n}] / n!$.
% 	Since the series converges absolutely for $q \in \bC$ with $|q| < |\zeta'|$, it follows $g^{(-|\zeta'| + \delta)}(x,y) < \infty$ for every $\delta > 0$.
% 	It contradicts to the definition of $\lambda_{0}^{[x]}(y)$, and thus it follows $\zeta' = -\lambda_{0}^{[x]}(y)$.
% 	When we suppose there are no poles,
% 	by the same argument above, we have $\lambda_{0}^{[x]}(y) = \infty$ and that contradicts to our assumption.
% \end{proof}

Considering the case $x = 0$ in Lemma \ref{lem:r-recurrence}, we have the following:

\begin{Cor}[{cf. \cite[Lemma 5.1]{QSD_downward_skip-free}}] \label{cor:lambdaZeroPositivity}
	Assume the boundary $\ell$ is entrance.
	Then it holds $\lambda_{0} \in (0,\infty)$, $h(\lambda_{0};0) = 0$
	and the process $X_{\cdot \wedge \tau_{0}}$ is $\lambda_{0}$-recurrent, that is, for every measurable set $A \subset I_{>0}$ with $m(A) > 0$
	\begin{align}
		\int_{0}^{\infty}\mathrm{e}^{\lambda_{0}t}\bP_{x}[X_{t} \in A, \tau_{0} > t]dt = \infty \quad (x \in I_{>0}). \label{lambdaRecurrence}
	\end{align}
	In addition, we have the following characterization of $\lambda_{0}$:
	\begin{align}
		\lambda_{0} = \min \{ \lambda \geq 0 \mid Z^{(-\lambda)}(0) = 0 \}. \label{eq45}
	\end{align}
\end{Cor}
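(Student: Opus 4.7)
The plan is to deduce the four assertions by specializing earlier results to $x=0$ and by exploiting the fact that, under the entrance hypothesis, $q \mapsto Z^{(q)}(0)$ is an entire function (Lemma \ref{lem:analyticExtentionOfh}). I would begin with $\lambda_0 \in (0,\infty)$: since $(\mathrm{I})_0$ is assumed, $\lambda_0 = \lambda_0^{[0]} = \lambda_0^{[0]}(y)$ for every $y \in I_{>0}$; the positivity then follows from Lemma \ref{lem:r-recurrence}, and the finiteness from Proposition \ref{prop:finiteLambda0}.

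Next, I would show $h(\lambda_0;0) = 0$. By Corollary \ref{cor:ScaleDefinesFiniteMeasure-c} combined with Lemma \ref{lem:analyticExtentionOfh},
\begin{align}
h(\lambda_0;0) = \lim_{y \to \ell} Z^{(-\lambda_0)}(0,y) = Z^{(-\lambda_0)}(0).
\end{align}
Formula \eqref{eq53} writes $g^{(q)}(0,y) = Z^{(q)}(y)/Z^{(q)}(0)$, and by \eqref{eq37} this quantity diverges as $q \to -\lambda_0+$. Because $Z^{(q)}(y)$ is entire and hence finite at $q = -\lambda_0$, the blow-up must originate in the denominator, yielding $Z^{(-\lambda_0)}(0) = 0$ and therefore $h(\lambda_0;0) = 0$. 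The characterization \eqref{eq45} is then immediate: $\lambda_0$ is a zero of $\lambda \mapsto Z^{(-\lambda)}(0)$, while Lemma \ref{lem:analyticExtentionOfh} forbids zeros with $\lambda < \lambda_0$, since $Z^{(q)}(0) > 0$ for every $q > -\lambda_0$.

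For the $\lambda_0$-recurrence \eqref{lambdaRecurrence}, I would start from the potential density formula \eqref{potentialDensityOneside-c}:
\begin{align}
\int_0^{\infty} \mathrm{e}^{-qt}\,\bP_x[X_t \in A,\ \tau_0 > t]\,dt = \int_A r^{(q)}(x,y)\,m(dy) \qquad (q > -\lambda_0),
\end{align}
and let $q \to -\lambda_0+$. Monotone convergence converts the left-hand side to $\int_0^{\infty}\mathrm{e}^{\lambda_0 t}\,\bP_x[X_t \in A,\ \tau_0 > t]\,dt$. For the integrand on the right, $r^{(q)}(x,y) = g^{(q)}(x)W^{(q)}(0,y) - W^{(q)}(x,y)$: the factor $g^{(q)}(x)$ diverges by \eqref{eq37}, while $W^{(q)}(0,y) \to W^{(-\lambda_0)}(0,y) > 0$ on $(0,\ell)$ by Corollary \ref{cor:non-negativityOfScale-entrance-c} and $W^{(q)}(x,y)$ stays bounded, so $r^{(q)}(x,y) \to \infty$ pointwise on $A$. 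Since $r^{(q)}(x,y) \geq 0$ by Remark \ref{rem:non-negativityOfPotentialDensity}, Fatou's lemma delivers the divergence, giving \eqref{lambdaRecurrence}.

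The main obstacle is the deduction $Z^{(-\lambda_0)}(0) = 0$: a priori, the blow-up of $Z^{(q)}(y)/Z^{(q)}(0)$ could come from either factor, so one must rule out a singularity in the numerator. What resolves this is the entrance hypothesis, which through Lemma \ref{lem:analyticExtentionOfh} upgrades \emph{both} $Z^{(q)}(y)$ and $Z^{(q)}(0)$ to entire functions of $q$; a pole of their ratio is then forced to be a zero of the denominator. Once this is in hand, all of $h(\lambda_0;0) = 0$, the characterization \eqref{eq45}, and the $\lambda_0$-recurrence follow by the routine monotone/Fatou considerations outlined above.
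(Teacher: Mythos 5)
Your proposal is correct and follows essentially the same route as the paper's proof: $\lambda_0 \in (0,\infty)$ via Lemma \ref{lem:r-recurrence} and Proposition \ref{prop:finiteLambda0}; $Z^{(-\lambda_0)}(0)=0$ by combining the pole from Lemma \ref{lem:r-recurrence} with the entirety of $q\mapsto Z^{(q)}(\cdot)$ from Lemma \ref{lem:analyticExtentionOfh}; $h(\lambda_0;0)=0$ and \eqref{eq45} from Corollary \ref{cor:ScaleDefinesFiniteMeasure-c} and \eqref{eq70}; and the $\lambda_0$-recurrence by letting $q\to-\lambda_0+$ in the potential formula and using $r^{(q)}\geq 0$. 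You simply make explicit the steps the paper's one-line appeals to Lemma \ref{lem:r-recurrence} and Remark \ref{rem:non-negativityOfPotentialDensity} leave to the reader, in particular the observation that the pole must arise from a zero of the denominator because both $Z^{(q)}(y)$ and $Z^{(q)}(0)$ are entire, and the pointwise divergence of $r^{(q)}(x,y)$ via the blow-up of $g^{(q)}(x)$ together with $W^{(-\lambda_0)}(0,y)>0$ from Corollary \ref{cor:non-negativityOfScale-entrance-c}.
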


\begin{proof}
	It directly follows from Lemma \ref{lem:r-recurrence} that $\lambda_{0} > 0$ and $Z^{(-\lambda_{0})}(0) = 0$.
	From \eqref{eq70} and Corollary \ref{cor:ScaleDefinesFiniteMeasure-c}, we have \eqref{eq45} and $h(\lambda_{0};0) = 0$.
	The $\lambda_{0}$-recurrence \eqref{lambdaRecurrence} follows from Remark \ref{rem:non-negativityOfPotentialDensity}.
\end{proof}
{
	Now we can prove Theorem \ref{thm:existenceOfQSD} (i) and complete the proof.
	\begin{proof}[Proof of Theorem \ref{thm:existenceOfQSD} (i)]
		From Lemmas \ref{lem:BdryClassificationEquivalence-c} and \ref{cor:lambdaZeroPositivity}, under the entrance condition it holds $\lambda_{0} \in (0,\infty)$, $h(\lambda;0) > 0 \ (\lambda \in (0,\lambda_{0}))$ and $h(\lambda_{0};0) = 0$.
		Thus, we have $\cQ = \{ \nu_{\lambda_{0}}\}$ by Lemma \ref{lem:scaleFuncGivesQSD-entrance-c}.
	\end{proof}
}

When the boundary $\ell$ is entrance, we can derive a kind of mean ergodicity to the unique QSD $\nu_{\lambda_{0}}$.
For the purpose, we first show the following.

\begin{Prop} \label{prop:lambdaX}
	Assume the boundary $\ell$ is entrance.
	The following holds:
	\begin{enumerate}
		\item The function $Z^{(-\lambda_{0})}$ is $\lambda_{0}$-invariant, that is, $Z^{(-\lambda_{0})}(x) > 0 \ (x \in I_{>0})$ and
		\begin{align}
			\bE_{x}[Z^{(-\lambda_{0})}(X_{t}), \tau_{0} > t] = \mathrm{e}^{-\lambda_{0} t}Z^{(-\lambda_{0})}(x) \quad (t \geq 0, \ x \in I). \label{eq63}
		\end{align}
		\item For $x,y \in I$ with $x < y$, it holds $\lambda_{0}^{[x]}(y) > \lambda_{0}$.
		\item The function $I \ni x \to Z^{(-\lambda_{0})}(x)$ is strictly increasing.
		\item The function $\bC \ni q \mapsto Z^{(q)}(0)$ has a simple root at $q = -\lambda_{0}$ and it holds
		\begin{align}
			\rho := \left. \frac{d}{dq} Z^{(q)}(0) \right|_{q = -\lambda_{0}} = \int_{(0,\ell)}W^{(-\lambda_{0})}(0,u)Z^{(-\lambda_{0})}(u)m(du) \in (0,\infty). \label{}
		\end{align}
	\end{enumerate}
\end{Prop}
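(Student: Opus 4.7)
The plan is to establish (i)--(iv) in that order, with each assertion feeding into the next.

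For (i), I would apply the Markov property at time $t$ to $e^{-q\tau_0}$ under $\bP_y$ and, using the factorization $Z^{(q)}(0)g^{(q)}(x)=Z^{(q)}(x)$ from \eqref{eq53}, derive the preliminary identity
\[
\bE_y\bigl[Z^{(q)}(X_t)\,;\,\tau_0>t\bigr] = e^{qt}\bigl(Z^{(q)}(y) - Z^{(q)}(0)\,\bE_y[e^{-q\tau_0}\,;\,\tau_0\le t]\bigr)
\]
for $q>-\lambda_0$. Using \eqref{eq81} together with $\bar{W}(x)\le\bar{W}(0)<\infty$ (by monotonicity of $W(\cdot,u)$ in its first argument and the entrance condition), $|Z^{(q)}(X_t)|$ is bounded uniformly in $X_t$, so both sides extend to entire functions of $q$ and the identity continues analytically to all of $\bC$. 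Evaluating at $q=-\lambda_0$, the fact $Z^{(-\lambda_0)}(0)=0$ from Corollary \ref{cor:lambdaZeroPositivity} yields \eqref{eq63}. The inequality $Z^{(-\lambda_0)}(y)\ge 0$ follows by letting $q\to-\lambda_0+$ in $Z^{(q)}(y)=g^{(q)}(y)Z^{(q)}(0)>0$ (valid on $q\in(-\lambda_0,0)$). For strict positivity, I would argue by contradiction: if $Z^{(-\lambda_0)}(y_0)=0$, integrating \eqref{eq63} in $t$ and applying \eqref{potentialDensityOneside-c} with $q=0$ forces $\int_I r^{(0)}(y_0,z)Z^{(-\lambda_0)}(z)\,m(dz)=0$; Remark \ref{rem:non-negativityOfPotentialDensity} then gives $Z^{(-\lambda_0)}\equiv 0$ $m$-a.e.\ on $I_{>0}$, and by Proposition \ref{Prop101} this zero set is dense, so there exists $z_n\to\ell$ with $Z^{(-\lambda_0)}(z_n)=0$. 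On the other hand, \eqref{eq50-c} together with dominated convergence (using $W^{(-\lambda_0)}(z,u)\le W(0,u)\in L^1(m)$ under entrance) forces $Z^{(-\lambda_0)}(z_n)\to 1$, a contradiction.

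For (ii), the strong Markov property at $\tau_x$, combined with $X_{\tau_x}=x$ (absence of negative jumps), gives $\bE_y[e^{\lambda\tau_x}]=Z^{(-\lambda)}(y)/Z^{(-\lambda)}(x)$ for $\lambda\in[0,\lambda_0)$. Monotone convergence extends this up to $\lambda=\lambda_0$, and (i) ensures the right-hand side remains finite, so $\lambda_0^{[x]}(y)\ge\lambda_0$. Since $Z^{(-\lambda_0)}(x)>0$, the meromorphic function $q\mapsto Z^{(q)}(y)/Z^{(q)}(x)$ is analytic at $q=-\lambda_0$; Lemma \ref{lem:r-recurrence}, which locates the smallest-modulus pole at $-\lambda_0^{[x]}(y)$, then rules out $\lambda_0^{[x]}(y)=\lambda_0$, giving the strict inequality (the case $\lambda_0^{[x]}(y)=\infty$ is trivial).

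For (iii), evaluating the identity from (ii) at $\lambda=\lambda_0$ reads $Z^{(-\lambda_0)}(y)=\bE_y[e^{\lambda_0\tau_x}]\,Z^{(-\lambda_0)}(x)$; since $\tau_x>0$ $\bP_y$-a.s.\ for $y>x$ and $\lambda_0>0$, the multiplier strictly exceeds $1$, yielding $Z^{(-\lambda_0)}(y)>Z^{(-\lambda_0)}(x)$ for $0<x<y$; the case $x=0$ is covered by (i). For (iv), I would differentiate \eqref{eq50-c} using $\tfrac{d}{dq}W^{(q)}=W^{(q)}\otimes W^{(q)}$ (obtained from Proposition \ref{prop:resolventIdentity} by letting $r\to q$) and apply Fubini to get
\[
\frac{d}{dq}Z^{(q)}(0) = \int_{(0,\ell)}W^{(q)}(0,v)Z^{(q)}(v)\,m(dv);
\]
evaluating at $q=-\lambda_0$ and invoking (iii) together with Corollary \ref{cor:non-negativityOfScale-entrance-c} shows the integrand is strictly positive on $I_{>0}$, so $\rho>0$ and $q=-\lambda_0$ is a simple root. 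Finiteness follows from the uniform bound on $Z^{(-\lambda_0)}$ and the identity $\int_{(0,\ell)}W^{(-\lambda_0)}(0,u)\,m(du)=1/\lambda_0$ derived from $Z^{(-\lambda_0)}(0)=0$.

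The main obstacle is the strict-positivity step in (i): turning the pointwise vanishing hypothesis $Z^{(-\lambda_0)}(y_0)=0$ into a contradiction requires both the density of $m$-full measure sets (Proposition \ref{Prop101}) and the limit $Z^{(-\lambda_0)}(z)\to 1$ as $z\to\ell$, which rests on the monotonicity $W(z,u)\le W(0,u)$ to justify dominated convergence in the absence of a stated continuity result for $Z^{(-\lambda_0)}$ in its spatial variable.
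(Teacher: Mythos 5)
Your proof is correct and reaches the same conclusions, but the route to the key invariance identity \eqref{eq63} in part (i) is genuinely different from the paper's. The paper first uses \eqref{potentialDensityOneside-c}, \eqref{eq53} and Corollary \ref{cor:resolventIdentityForZ} to compute the Laplace transform
\begin{equation}
\int_0^\infty \mathrm{e}^{-qt}\,\bE_x[Z^{(-\lambda_0)}(X_t),\tau_0>t]\,dt=\frac{Z^{(-\lambda_0)}(x)}{q+\lambda_0},
\end{equation}
and then inverts it using the right-continuity of $X$ together with continuity of $x\mapsto Z^{(-\lambda_0)}(x)$ (Proposition \ref{prop:continuityOfW} plus dominated convergence). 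You instead stay in the time domain: you apply the Markov property at fixed $t$ to $\mathrm{e}^{-q\tau_0}$, rewrite $g^{(q)}$ via \eqref{eq53}, and analytically continue the resulting pointwise-in-$t$ identity to $q\in\bC$, reading off \eqref{eq63} at $q=-\lambda_0$ directly. Your route avoids the Laplace-inversion step (and with it the explicit appeal to continuity in the spatial variable), at the cost of having to verify entirety of $q\mapsto\bE_y[Z^{(q)}(X_t);\tau_0>t]$, which you justify via the uniform bound from \eqref{eq81} and $\bar W(x,\ell)\le\bar W(0,\ell)<\infty$. The strict-positivity contradiction, the treatment of (ii)--(iii) through \eqref{eq53} and Lemma \ref{lem:r-recurrence}, and the derivative computation in (iv) all agree with the paper, modulo cosmetic differences: your (ii) takes a short detour through monotone convergence that is not strictly needed (the inclusion $\{\tau_x\le\tau_0\}$ already gives $\lambda_0^{[x]}(y)\ge\lambda_0$), and in (iv) you differentiate \eqref{eq50-c} termwise rather than invoking Corollary \ref{cor:resolventIdentityForZ}, which amounts to the same computation after Fubini. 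One small imprecision worth fixing: in the dominated-convergence step showing $Z^{(-\lambda_0)}(z_n)\to1$, the bound should be $0\le W^{(-\lambda_0)}(z,u)\le W(z,u)\le W(0,u)$ (using Remark \ref{Rem502} and monotonicity of $W$ in its first argument), rather than citing \eqref{eq03} directly, which would only give $W(0,u)$ up to an exponential constant; either works, but the former is what the entrance condition is built for.
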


\begin{proof}
	From \eqref{potentialDensityOneside-c}, \eqref{eq53} and Corollary \ref{cor:resolventIdentityForZ},
	we have for $q \in (-\lambda_{0},\infty)$ and $r \in [-\lambda_{0},\infty)$ with $q \neq r$ and $x \in I_{<\ell}$
	\begin{align}
		&\int_{0}^{\infty}\mathrm{e}^{-qt} \bE_{x}[Z^{(r)}(X_{t}), \tau_{x} > t] dt \label{} \\
		= &\int_{(0,\ell)} \left( \frac{Z^{(q)}(x)}{Z^{(q)}(0)}W^{(q)}(0,u) - W^{(q)}(x,u) \right) Z^{(r)}(u)m(du) \label{} \\
		= & \lim_{z \to \ell} \left( \frac{Z^{(q)}(x)}{Z^{(q)}(0)} W^{(q)} \otimes Z^{(r)}(0,z) - W^{(q)} \otimes Z^{(r)}(x,z)\right) \label{} \\
		= & \frac{1}{q-r} \left( \frac{Z^{(q)}(x)}{Z^{(q)}(0)}(Z^{(q)}(0) - Z^{(r)}(0)) - (Z^{(q)}(x) - Z^{(r)}(x)) \right) \label{} \\
		= & \frac{1}{q-r} \left( Z^{(r)}(x) - \frac{Z^{(q)}(x)}{Z^{(q)}(0)}Z^{(r)}(0) \right). \label{} 
	\end{align}
	By setting $r = -\lambda_{0}$, we have from \eqref{eq45}
	\begin{align}
		\int_{0}^{\infty}\mathrm{e}^{-qt} \bE_{x}[Z^{(-\lambda_{0})}(X_{t}), \tau_{0} > t] dt = \frac{Z^{(-\lambda_{0})}(x)}{q+\lambda_{0}} \quad (q > -\lambda_{0}). \label{eq64}
	\end{align}
	Since $X$ is right-continuous and the function $I \ni x \mapsto Z^{(-\lambda_{0})}(x)$ is bounded continuous, which easily follows from Proposition \ref{prop:continuityOfW} and the dominated convergence theorem, 
	the equality \eqref{eq63} holds.
	If $Z^{(-\lambda_{0})}(x) = 0$ for some $x \in (0,\ell)$, it follows from Remark \ref{rem:non-negativityOfPotentialDensity}, \eqref{potentialDensityOneside-c} and \eqref{eq64} that $Z^{(-\lambda_{0})}(x) = 0$ for $m$-a.e.\ $x \in (0,\ell)$.
	It is, however, impossible because $\lim_{x \to \ell}Z^{(-\lambda_{0})}(x) = 1$, which can be seen from Proposition \ref{prop:positivityOfW} and the entrance condition as follows:
	\begin{align}
	0 \leq 1 - Z^{(-\lambda_{0})}(x) \leq \lambda_{0} \int_{(x,\ell)}W^{(-\lambda_{0})}(x,u)m(du) \leq \lambda_{0} \int_{(x,\ell)}W(0,u)m(du) \xrightarrow[]{x \to \ell} 0. \label{}
	\end{align}
	Thus, we obtain (i).
	The assertion (ii) follows from Lemma \ref{lem:r-recurrence} and (i).
	From (ii), it holds for every $x,y \in I$ with $x < y$,
	\begin{align}
		Z^{(-\lambda_{0})}(y) = g^{(-\lambda_{0})}(x,y) Z^{(-\lambda_{0})}(x) > Z^{(-\lambda_{0})}(x), \label{}
	\end{align}
	and (iii) holds.
	From Corollary \ref{cor:resolventIdentityForZ} and the dominated convergence theorem,
	we have for $q \in \bC$
	\begin{align}
		Z^{(q)}(0) = Z^{(q)}(0) - Z^{(-\lambda_{0})}(0) = (q + \lambda_{0}) \int_{(0,\ell)}W^{(q)}(0,u)Z^{(-\lambda_{0})}(u)m(du), \label{}
	\end{align}
	and we see (iv) holds.
\end{proof}
{
The following is the result announced above.
}
\begin{Thm} \label{thm:meanYaglomLimit}
	Assume the boundary $\ell$ is entrance.
	For a function $f: (0,\ell) \to \bR$ such that $\int_{(0,\ell)}W^{(-\lambda_{0})}(0,u) |f(u)|m(du) < \infty$, it holds
	\begin{align}
		\begin{split}			
			&\lim_{t \to \infty} \frac{1}{t}\int_{0}^{t}\mathrm{e}^{\lambda_{0} s} \bE_{x}[f(X_{s}), \tau_{0} > s]ds \\
			= &\frac{Z^{(-\lambda_{0})}(x)}{\rho} \int_{(0,\ell)} W^{(-\lambda_{0})}(0,u)f(u)m(du). 
		\end{split}
		\label{meanYaglom}
	\end{align}
\end{Thm}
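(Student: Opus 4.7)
The plan is to analyze the Laplace transform
\[
V^{(q)}(x) := \int_{0}^{\infty} e^{-qt}\bE_{x}[f(X_{t}), \tau_{0} > t]\, dt \qquad (q > -\lambda_{0}),
\]
extract its simple pole at $q = -\lambda_{0}$, and conclude via Karamata's Tauberian theorem for Ces\`aro averages. It suffices to treat $f \geq 0$; the general case follows by writing $f = f^{+} - f^{-}$. By the potential density formula \eqref{potentialDensityOneside-c}, $V^{(q)}(x) = \int_{(0,\ell)} r^{(q)}(x,y) f(y)\, m(dy)$. Writing
\[
r^{(q)}(x,y) = \frac{Z^{(q)}(x) W^{(q)}(0,y) - Z^{(q)}(0) W^{(q)}(x,y)}{Z^{(q)}(0)}
\]
and invoking Proposition \ref{prop:lambdaX}(iv), which gives $Z^{(q)}(0) = \rho(q+\lambda_{0}) + O((q+\lambda_{0})^{2})$, I would first establish the pointwise limit
\[
\lim_{q \to -\lambda_{0}+} (q+\lambda_{0})\, r^{(q)}(x,y) = \frac{Z^{(-\lambda_{0})}(x)\, W^{(-\lambda_{0})}(0,y)}{\rho},
\]
and then upgrade it by dominated convergence to
\[
\lim_{q \to -\lambda_{0}+} (q+\lambda_{0})\, V^{(q)}(x) = \frac{Z^{(-\lambda_{0})}(x)}{\rho} \int_{(0,\ell)} W^{(-\lambda_{0})}(0,y) f(y)\, m(dy) =: C_{x,f}.
\]

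Granted this singularity extraction, the conclusion follows cleanly. Set $U_{x}(t) := \int_{0}^{t} e^{\lambda_{0} s}\bE_{x}[f(X_{s}), \tau_{0} > s]\, ds$, which is non-decreasing for $f \geq 0$, with Laplace transform $\int_{0}^{\infty} e^{-qt} U_{x}(t)\, dt = V^{(q-\lambda_{0})}(x)/q$ for $q > 0$. Hence
\[
q^{2} \int_{0}^{\infty} e^{-qt} U_{x}(t)\, dt = q\, V^{(q-\lambda_{0})}(x) \longrightarrow C_{x,f} \qquad (q \to 0+),
\]
and Karamata's Tauberian theorem yields $U_{x}(t)/t \to C_{x,f}$ as $t \to \infty$, which is precisely \eqref{meanYaglom} for $f \geq 0$; the general case follows by linearity.

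The main obstacle is the interchange of limit and integration in the pole-extraction step. The hypothesis provides $m$-integrability of $W^{(-\lambda_{0})}(0,\cdot)\, |f|$, whereas $W^{(q)}(0,\cdot)$ for $q$ slightly above $-\lambda_{0}$ is strictly larger, so a naive uniform bound via $W^{(-\lambda_{0}+\delta)}(0,\cdot)$ need not be integrable against $|f|\, m$. My plan for this delicate point is to exploit the factorization $r^{(q)}(x,y) = W^{(q)}(0,y)\, \bE_{x}[e^{-q\tau_{0}}, \tau_{y}^{+} < \tau_{0}]$ from Remark \ref{rem:non-negativityOfPotentialDensity} together with the identity
\[
\bE_{x}[e^{\lambda_{0} \tau_{y}^{+}}, \tau_{y}^{+} < \tau_{0}] = \frac{Z^{(-\lambda_{0})}(x)}{Z^{(-\lambda_{0})}(y)},
\]
obtained by optional stopping the martingale $e^{\lambda_{0}(t\wedge \tau_{0})} Z^{(-\lambda_{0})}(X_{t\wedge \tau_{0}})$ (available from Proposition \ref{prop:lambdaX}(i), using $Z^{(-\lambda_{0})}(0) = 0$) at $\tau_{y}^{+} \wedge \tau_{0}$; this should yield $q$-uniform bounds on $(q+\lambda_{0})\, r^{(q)}(x,y)$ expressed purely in terms of $W^{(-\lambda_{0})}$ and $Z^{(-\lambda_{0})}$. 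Should this bookkeeping prove unwieldy, an alternative route is the Doob $h$-transform with $h = Z^{(-\lambda_{0})}$: the transformed process is a conservative Markov process on $(0,\ell)$ with invariant probability $\tilde{\pi}(dy) = \rho^{-1} W^{(-\lambda_{0})}(0,y)\, Z^{(-\lambda_{0})}(y)\, m(dy)$, and the $\lambda_{0}$-recurrence of Corollary \ref{cor:lambdaZeroPositivity} should provide Harris recurrence, from which the Ces\`aro ergodic theorem delivering \eqref{meanYaglom} would follow.
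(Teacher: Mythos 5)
Your overall architecture is identical to the paper's: express the Laplace transform $V^{(q)}(x)=\int_0^\infty e^{-qt}\bE_x[f(X_t),\tau_0>t]\,dt$ via the potential density formula \eqref{potentialDensityOneside-c}, extract the simple pole of $g^{(q)}(x)=Z^{(q)}(x)/Z^{(q)}(0)$ at $q=-\lambda_0$ using Proposition \ref{prop:lambdaX}(iv), and invoke Karamata's Tauberian theorem for the monotone function $t\mapsto\int_0^te^{\lambda_0 s}\bE_x[f(X_s),\tau_0>s]\,ds$. The paper states this in three lines and does not justify the interchange of $\lim_{q\to-\lambda_0+}$ with the $m(dy)$-integration; you correctly single out this interchange as the genuine issue, which is to your credit.

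However, your proposed fix does not quite close the gap. First, a small point: the optional-stopping relation is only an \emph{inequality}, $\bE_x[e^{\lambda_0\tau_y^+},\tau_y^+<\tau_0]\leq Z^{(-\lambda_0)}(x)/Z^{(-\lambda_0)}(y)$, because the process may overshoot $y$ (no negative jumps means downward skip-free, not upward), so $Z^{(-\lambda_0)}(X_{\tau_y^+})\geq Z^{(-\lambda_0)}(y)$ with possible strict inequality. More seriously, even granting the inequality, the estimate it yields is of the form $(q+\lambda_0)r^{(q)}(x,y)\leq C\,W^{(q)}(0,y)/Z^{(-\lambda_0)}(y)$, which for $q\in[-\lambda_0,-\lambda_0+\delta]$ is dominated only by a constant times $W^{(-\lambda_0+\delta)}(0,y)$, not by $W^{(-\lambda_0)}(0,y)$. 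Since Corollary \ref{cor:lambdaZeroPositivity} gives $h(\lambda_0;0)=\lim_{y\to\ell}W^{(-\lambda_0)}(0,y)/W(0,y)=0$, the ratio $W^{(-\lambda_0+\delta)}(0,y)/W^{(-\lambda_0)}(0,y)$ is unbounded near $\ell$, so this dominating function need not be integrable against $|f|\,m$ under the stated hypothesis. The cleaner route is the resolvent identity \eqref{eq78} together with the entrance condition: writing $W^{(q)}(0,y)=W^{(-\lambda_0)}(0,y)+(q+\lambda_0)W^{(q)}\otimes W^{(-\lambda_0)}(0,y)$, Tonelli and the bound $W^{(-\lambda_0)}(u,y)\leq W^{(-\lambda_0)}(0,y)$ give, for $f\geq0$,
\[
\int W^{(q)}\otimes W^{(-\lambda_0)}(0,y)f(y)\,m(dy)\leq\Bigl(\int W^{(-\lambda_0)}(0,y)f(y)\,m(dy)\Bigr)\int W^{(q)}(0,u)\,m(du),
\]
and the last factor is bounded on $q$-compacts by Lemma \ref{lem:analyticExtentionOfh} (the entrance hypothesis). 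Hence $\int W^{(q)}(0,y)f(y)\,m(dy)\to\int W^{(-\lambda_0)}(0,y)f(y)\,m(dy)$ and $(q+\lambda_0)\int W^{(q)}(x,y)f(y)\,m(dy)\to0$, which is what the asymptotic in the paper's proof needs. Your second proposed route via the Doob $h$-transform and Harris recurrence would also work in principle (and is the content of the subsequent Remark in the paper), but requires its own justification, e.g.\ verifying the regularity making the $h$-process a genuine conservative Markov process.
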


\begin{proof}
	Take a function $f:(0,\ell) \to \bR$ such that $\int_{(0,\ell)}W^{(-\lambda_{0})}(0,u)|f(u)|m(du) < \infty$.
	From \eqref{potentialDensityOneside-c} and Proposition \ref{prop:lambdaX} (iv), we have for $x \in (0,\ell)$
	\begin{align}
		&\int_{0}^{\infty}\mathrm{e}^{-qt}\bE_{x}[f(X_{t}), \tau_{0} > t]dt \label{} \\
		\sim &\frac{Z^{(-\lambda_{0})}(x)}{\rho (q + \lambda_{0})} \int_{(0,\ell)}W^{(-\lambda_{0})}(0,u)f(u)m(du) \quad (q \to -\lambda_{0}+). \label{}
	\end{align}
	By Karamata's Tauberian theorem (see, e.g., \cite[Theorem 1.7.1]{Regularvariation}), it follows that
	\begin{align}
		&\int_{0}^{t}\mathrm{e}^{\lambda_{0} s} \bE_{x}[f(X_{s}), \tau_{0} > s]ds \label{} \\
		\sim &t \frac{Z^{(-\lambda_{0})}(x)}{\rho} \int_{(0,\ell)} W^{(-\lambda_{0})}(0,u)f(u)m(du) \quad (t \to \infty). \label{}
	\end{align}
	The proof is complete.
\end{proof}

\begin{Rem}
	The limit \eqref{meanYaglom} is, in fact, the mean ergodicity of $X$ under a change of measure of $\bP$.
	From Proposition \ref{prop:lambdaX} (i), we see
	\begin{align}
		M_{t} := \mathrm{e}^{\lambda_{0}t} \frac{Z^{(-\lambda_{0})}(X_{t})}{Z^{(-\lambda_{0})}(X_{0})} 1\{ {\tau_0} > t \} \label{eq80}
	\end{align}
	is a non-negative martingale with $M_{0} = 1$ under $\bP_{x}$ for every $x \in I_{>0}$.
	We denote the change of measure by $M_{t}$ by $\bP^{(\lambda_{0})} := \{ \bP_{x}^{(\lambda
	_{0})} \}_{x \in I_{>0}}$:
	\begin{align}
		\left.\frac{d\bP^{(\lambda_{0})}_{x}}{d\bP_{x}}\right|_{\cF_{t}} := M_{t} \quad (t > 0). \label{}  
	\end{align}
	Note that $X$ does not hit $0$ under $\bP^{(\lambda_{0})}_{x}$ since it obviously follows from \eqref{eq80} that $\bP^{(\lambda_{0})}_{x}[{\tau_0}< t] = 0$ for every $t > 0$.
	Define a measure on $(0,\ell)$
	\begin{align}
		\pi(du) := \frac{1}{\rho}W^{(-\lambda_{0})}(0,u)Z^{(-\lambda_{0})}(u)m(du). \label{}
	\end{align}
	From Proposition \ref{prop:lambdaX} (iv), we see $\pi$ is a distribution on $(0,\ell)$.
	The distribution $\pi$ is a stationary distribution of $X$ under $\bP^{(\lambda_{0})}$.
	Indeed, since $\nu_{\lambda}$ is a QSD, it holds for a non-negative measurable function $f$ on $I$
	\begin{align}
		\bE_{\pi}^{(\lambda_{0})}[f(X_{t})] &= \frac{1}{\rho}\int_{(0,\ell)}W^{(-\lambda_{0})}(0,x)Z^{(-\lambda_{0})}(x) \bE_{x}\left[ \mathrm{e}^{\lambda_{0}t} \frac{Z^{-\lambda_{0}}(X_{t})}{Z^{(-\lambda_{0})}(x)} f(X_{t}) , {\tau_0}
			> t \right] {m(dx)}\label{} \\
		&= \frac{\mathrm{e}^{\lambda_{0}t}}{\lambda_{0}\rho} \bE_{\nu_{\lambda}}[Z^{(-\lambda_{0})}(X_{t})f(X_{t}), {\tau_0}> t] \label{} \\
		&= \frac{1}{\lambda_{0}\rho} \int_{(0,\ell)}Z^{(-\lambda_{0})}(u)f(u)\nu_{\lambda}(du) \label{} \\
		&= \int_{(0,\ell)}f(u)\pi(du). \label{}
	\end{align}
	Now we see the limit \eqref{meanYaglom} is the mean ergodicity w.r.t.\ the stationary distribution $\pi$:
	\begin{align}
		\lim_{t \to \infty} \bE_{x}^{(\lambda_{0})}\left[\frac{1}{t}\int_{0}^{t} f(X_{s})ds \right] 
		= \int_{(0,\ell)} f(u)\pi(du). \label{}
	\end{align}
\end{Rem}

{We finish this section by giving two independent results.
The one is a characterization of $\lambda_{0}$ by the positivity of the scale functions.
The corresponding results for one-dimensional diffusions and spectrally positive L\'evy processes have been given in \cite[Lemma 2]{Mandl} and \cite[Theorem 1.5]{QSD_SPL}, respectively.
We omit the proof since it is the same as \cite[Theorem 1.9]{QSD_downward_skip-free}.}

\begin{Thm}[cf. {\cite[Theorem 1.9]{QSD_downward_skip-free}}] \label{thm:spectralBottomCharacterization-c}
	It holds
	\begin{align}
		\lambda_{0} = \sup \{ \lambda \geq 0 \mid W^{(-\lambda)}(0,x) > 0 \ \text{for every $x \in (0,\ell)$} \}. \label{eq42-c}
	\end{align}
\end{Thm}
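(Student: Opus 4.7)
The plan is to prove both inequalities in \eqref{eq42-c} separately. Denote the right-hand side by $\Lambda$.

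The lower bound $\Lambda \geq \lambda_{0}$ amounts to showing $W^{(-\lambda)}(0, x) > 0$ for every $x \in (0, \ell)$ and $\lambda \in [0, \lambda_{0}]$. I first observe that $z \mapsto \lambda_{0}^{[0, z]}$ is non-increasing and that $\lim_{z \to \ell} \lambda_{0}^{[0, z]} = \lambda_{0}$, by monotone convergence applied to $\bE_{u}[e^{\mu \tau_{0}^{-}}, \tau_{0}^{-} < \tau_{z}^{+}] \uparrow \bE_{u}[e^{\mu \tau_{0}^{-}}]$ as $z \to \ell$. Given $\lambda < \lambda_{0}$ and $x \in (0,\ell)$, I then select $z \in (x, \ell)$ with $\lambda_{0}^{[0, z]} > \lambda$, and Proposition \ref{prop:positivityOfW}(ii) applied on $[0, z]$ yields $W^{(-\lambda)}(0, x) > 0$. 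The endpoint $\lambda = \lambda_{0}$ is handled directly by Corollary \ref{cor:non-negativityOfScale-entrance-c}.

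For the upper bound $\Lambda \leq \lambda_{0}$, the core claim is that $W^{(-\lambda_{0}^{[0, z]})}(0, z) = 0$ for every $z \in (0, \ell)$ with $\lambda_{0}^{[0, z]} < \infty$, and moreover this zero is simple so that $q \mapsto W^{(q)}(0, z)$ changes sign through it. To establish the vanishing I let $\mu \uparrow \lambda_{0}^{[0, z]}$ in the analytic extension of Proposition \ref{prop:exitProblem1},
\begin{equation*}
W^{(-\mu)}(y, z) \;=\; W^{(-\mu)}(0, z)\, \bE_{y}[e^{\mu \tau_{0}^{-}}, \tau_{0}^{-} < \tau_{z}^{+}],
\end{equation*}
observing that the expectation diverges by maximality of $\lambda_{0}^{[0, z]}$, while $W^{(-\mu)}(y, z)$ converges by continuity to $W^{(-\lambda_{0}^{[0, z]})}(y, z)$, which is strictly positive for $y$ sufficiently close to $z$ by Corollary \ref{cor:ratioLimitScaleFunc}; hence $W^{(-\mu)}(0, z) \to 0$. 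Simplicity of the zero follows because the pole at $q = -\lambda_{0}^{[0, z]}$ of the meromorphic function $q \mapsto \bE_{y}[e^{-q \tau_{0}^{-}}, \tau_{0}^{-} < \tau_{z}^{+}]$ is simple, which is the analog of Lemma \ref{lem:r-recurrence} for the interval $[0, z]$ (with $z$ accessible, $[0,z]$ plays the role of the entrance case; this is essentially the content of Appendix \ref{appendix:twoSideExit}).

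Combining this simplicity with Proposition \ref{prop:positivityOfW}(iii), which provides strict monotonicity of $q \mapsto W^{(q)}(0, z)$ on $[-\lambda_{0}^{[0, z]}, \infty)$, the derivative of $W^{(q)}(0,z)$ at $q = -\lambda_{0}^{[0,z]}$ is strictly positive, so $W^{(-\lambda)}(0, z) < 0$ on a right-neighborhood $(\lambda_{0}^{[0, z]}, \lambda_{0}^{[0, z]} + \delta(z))$. For a given $\lambda > \lambda_{0}$, taking $z$ close enough to $\ell$ that $\lambda_{0}^{[0, z]}$ lies just below $\lambda$ places $\lambda$ into this right-neighborhood, yielding $W^{(-\lambda)}(0, z) < 0$ and hence $\lambda \notin \{\mu : W^{(-\mu)}(0,\cdot) > 0\}$; this forces $\Lambda \leq \lambda_{0}$. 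The principal technical obstacle is the quantitative sign-change step: one must ensure that the sign-change width $\delta(z)$ remains sufficiently large as $z \to \ell$ so that every $\lambda > \lambda_{0}$ is actually captured by some $z$. This is handled by combining simplicity of the zero with the continuity of $z \mapsto \lambda_{0}^{[0, z]}$ and standard analytic-continuation estimates, exactly mirroring the corresponding argument in \cite[Theorem 1.9]{QSD_downward_skip-free}.
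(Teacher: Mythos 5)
Your lower bound is fine, but the supporting observation is misjustified and the upper bound has two genuine gaps. First, the claim that $\lim_{z\to\ell}\lambda_0^{[0,z]}=\lambda_0$ ``by monotone convergence'' does not follow: monotone convergence gives $\bE_u[\mathrm e^{\mu\tau_0^-},\tau_0^-<\tau_z^+]\uparrow\bE_u[\mathrm e^{\mu\tau_0}]$, which shows only that $\lambda_0^{[0,z]}\geq\lambda_0$; it is perfectly consistent with this that $\bE_u[\mathrm e^{\mu\tau_0^-},\tau_0^-<\tau_z^+]<\infty$ for every $z<\ell$ while the increasing limit is $+\infty$, so the inequality $\lim_{z\to\ell}\lambda_0^{[0,z]}\leq\lambda_0$ is not free. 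Since your whole upper-bound scheme hinges on being able to push $\lambda_0^{[0,z]}$ below a given $\lambda>\lambda_0$, this is not a decoration but the crux; indeed, once you prove $W^{(-\lambda_0^{[0,z]})}(0,z)=0$, the identity $\inf_z\lambda_0^{[0,z]}=\lambda_0$ is \emph{equivalent} to the theorem, so asserting it up front begs the question.

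Second, even granting the zero $W^{(-\lambda_0^{[0,z]})}(0,z)=0$ and its simplicity, the ``sign-change width $\delta(z)$'' step is a real obstruction that you flag but do not resolve. It can, however, be avoided entirely. The resolvent identity (Proposition \ref{prop:resolventIdentity}) applied with $q=-\lambda$, $r=-\lambda_0^{[0,z]}$ gives, using $W^{(-\lambda_0^{[0,z]})}(0,z)=0$,
\begin{equation}
W^{(-\lambda)}(0,z)=\bigl(\lambda_0^{[0,z]}-\lambda\bigr)\int_{(0,z)}W^{(-\lambda)}(0,u)\,W^{(-\lambda_0^{[0,z]})}(u,z)\,m(du).
\end{equation}
Under the hypothesis that $W^{(-\lambda)}(0,\cdot)>0$ on $(0,\ell)$, the integral is strictly positive (Remark \ref{Rem502} gives $W^{(-\lambda_0^{[0,z]})}(u,z)\geq 0$, Corollary \ref{cor:ratioLimitScaleFunc} gives strict positivity near $z$, and Proposition \ref{Prop101} gives mass of $m$ there), so $\mathrm{sign}\,W^{(-\lambda)}(0,z)=\mathrm{sign}\bigl(\lambda_0^{[0,z]}-\lambda\bigr)$. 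This replaces all local analysis of the zero by a global algebraic identity and shows at once that $\lambda\in S:=\{\mu\geq 0:W^{(-\mu)}(0,\cdot)>0\}$ forces $\lambda<\lambda_0^{[0,z]}$ for every $z$, hence $\sup S=\inf_z\lambda_0^{[0,z]}$. The remaining work is exactly to show $\inf_z\lambda_0^{[0,z]}\leq\lambda_0$, which needs its own argument (e.g.\ taking $\mu<\inf_z\lambda_0^{[0,z]}$, noting $h(\mu;0):=\lim_z W^{(-\mu)}(0,z)/W(0,z)\in[0,1)$ exists, and distinguishing $h(\mu;0)>0$ — where $\bE_x[\mathrm e^{\mu\tau_0}]\leq 1/h(\mu;0)<\infty$ — from $h(\mu;0)=0$ — where $\nu_\mu$ is a QSD with exponential killing time, hence $\mu\leq\lambda_0$). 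You give none of this; your proof as written is incomplete at precisely the point where the content lies.
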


% \begin{proof}
% 	Set the RHS of \eqref{eq42-c} as $\tilde{\lambda}_{0}$.
% 	We see $\lambda_{0} \leq \tilde{\lambda}_{0}$ from Corollary \ref{cor:non-negativityOfScale-entrance-c}.
% 	Suppose $\tilde{\lambda}_{0} > \lambda_{0}$.
% 	For $\lambda' \in (\lambda_{0}, \tilde{\lambda}_{0})$, we see from Proposition \ref{prop:resolventIdentity} that for $q \in (-\lambda_{0},\infty) \cup \{0\}$
% 	\begin{align}
% 		h^{(q)}(\lambda';0) := \lim_{y \to\infty} \frac{W^{(-\lambda')}(0,y)}{W^{(q)}(0,y)} = 1 - (q+\lambda') \int_{(0,\ell)}W^{(-\lambda')}(0,u)g^{(q)}(u)m(du) \in [0,1). \label{eq74}
% 	\end{align}
% 	In particular, it holds $\int_{(0,\ell)}W^{(-\lambda')}(0,u)m(du) < \infty$.
% 	Let the boundary $\ell$ be non-entrance.
% 	If $h^{(q)}(\lambda';0) > 0$ for some $q \geq 0$, it follows $\int_{(0,\ell)}W^{(q)}(0,u)m(du) < \infty$ and contradicts to the non-entrance condition.
% 	Thus, $h^{(q)}(\lambda';0) = 0$ for every $q \geq 0$.
% 	Then the same computation in Lemma \ref{lem:scaleFuncGivesQSD-entrance-c} shows that $\nu_{\lambda'}(du):= \lambda' W^{(-\lambda')}(0,u) m(du)$ is a quasi-stationary distribution such that $\bP_{\nu_{\lambda'}}[\tau_{0} > t] = \mathrm{e}^{-\lambda' t}$, which is, however, impossible from the definition of $\lambda_{0}$.
% 	Next let the boundary $\ell$ be entrance.
% 	Taking limit as $q \to -\lambda_{0}+$ in \eqref{eq74}, we see from Lemma \ref{lem:r-recurrence} that the limit \eqref{eq74} is equal to $-\infty$, and it contradicts to $h^{(q)}(\lambda';0) \in [0,1)$.
% \end{proof}
{
The other is a stochastic ordering of QSDs for the non-entrance boundary case.
For distributions $\mu$ and $\nu$ on $I$, we denote $\mu \leq_{\mathrm{lr}} \nu$ when it holds for any measurable sets $A,B \subset I$ such that $\sup A \leq \inf B$
\begin{align}
	\mu(B)\nu(A) \leq \mu(A)\nu(B). \label{eq79}
\end{align}
The order is called the \textit{likelihood ratio order} (see, e.g., \cite[Section 1.C]{StochasticOrder}).
We note that the order is stronger than the usual stochastic order, that is, \eqref{eq79} implies $\mu(I_{\geq x}) \leq \nu(I_{\geq x})$ for every $x \in I$ (see \cite[Theorem 1.C.1]{StochasticOrder}).
\begin{Thm}[{cf. \cite[Theorem 1.8]{QSD_downward_skip-free}}]
	Suppose $\lambda_{0} > 0$ and the boundary $\infty$ is non-entrance.
	Then it holds
	\begin{align}
		\nu_{\lambda} \leq_{\mathrm{lr}} \nu_{\lambda'} \quad (0 < \lambda' < \lambda \leq \lambda_{0}) \label{}
	\end{align}
\end{Thm}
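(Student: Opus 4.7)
The plan is to reduce the likelihood ratio order to a monotone likelihood ratio property for the densities, and to establish that property via the resolvent identity together with the probabilistic representation of ratios of scale functions from Proposition \ref{prop:exitProblem1}.

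First, I would observe that for measures $\mu,\nu$ on $I$ with respective densities $f_\mu,f_\nu$ with respect to $m$, the condition $\mu\le_{\mathrm{lr}}\nu$ is implied by the pointwise inequality $f_\mu(x)f_\nu(y)\ge f_\mu(y)f_\nu(x)$ for $x\le y$; this follows by a direct Fubini argument on the product region $A\times B$ when $\sup A\le \inf B$. Since $\nu_\lambda$ has density $\lambda W^{(-\lambda)}(0,\cdot)$ and $\nu_{\lambda'}$ has density $\lambda' W^{(-\lambda')}(0,\cdot)$, the desired order $\nu_\lambda\le_{\mathrm{lr}}\nu_{\lambda'}$ is equivalent to showing that the map
\begin{align}
y\longmapsto \frac{W^{(-\lambda)}(0,y)}{W^{(-\lambda')}(0,y)} \label{}
\end{align}
is non-increasing on $I_{>0}$. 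Note that the denominator is strictly positive because $\lambda'<\lambda_0$ by Proposition \ref{prop:positivityOfW}, and the numerator is non-negative by Remark \ref{Rem502}.

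Next, I would apply Proposition \ref{prop:resolventIdentity} with $q=-\lambda$ and $r=-\lambda'$, giving
\begin{align}
W^{(-\lambda)}(0,y)=W^{(-\lambda')}(0,y)-(\lambda-\lambda')\int_{(0,y)}W^{(-\lambda)}(0,u)\,W^{(-\lambda')}(u,y)\,m(du). \label{}
\end{align}
Dividing by $W^{(-\lambda')}(0,y)$ yields
\begin{align}
\frac{W^{(-\lambda)}(0,y)}{W^{(-\lambda')}(0,y)}=1-(\lambda-\lambda')\int_{(0,y)}W^{(-\lambda)}(0,u)\,\frac{W^{(-\lambda')}(u,y)}{W^{(-\lambda')}(0,y)}\,m(du). \label{}
\end{align}
Since $\lambda'<\lambda_0\le \lambda_0^{[0,y]}$, the analytic extension of Proposition \ref{prop:exitProblem1} (valid for $\Re q>-\lambda_0^{[0,y]}$) gives
\begin{align}
\frac{W^{(-\lambda')}(u,y)}{W^{(-\lambda')}(0,y)}=\bE_u[\mathrm{e}^{\lambda'\tau_0^-},\tau_0^-<\tau_y^+]\qquad(u\in(0,y)). \label{}
\end{align}

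The key monotonicity is now transparent. For each fixed $u$, the quantity $\bE_u[\mathrm{e}^{\lambda'\tau_0},\tau_0<\tau_y^+]$ is non-negative and non-decreasing in $y$, because the event $\{\tau_0<\tau_y^+\}$ grows with $y$. Combined with $W^{(-\lambda)}(0,u)\ge 0$ and the fact that the integration region $(0,y)$ also increases with $y$, the integral
\begin{align}
y\longmapsto\int_{(0,y)}W^{(-\lambda)}(0,u)\,\bE_u[\mathrm{e}^{\lambda'\tau_0},\tau_0<\tau_y^+]\,m(du) \label{}
\end{align}
is non-decreasing. Since $\lambda-\lambda'>0$, this forces $W^{(-\lambda)}(0,y)/W^{(-\lambda')}(0,y)$ to be non-increasing in $y$, which is the monotone likelihood ratio property and completes the proof.

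The most delicate point is the legitimacy of the analytic extension of the exit formula to $q=-\lambda'$: one must verify that $-\lambda'>-\lambda_0^{[0,y]}$, which follows because $\lambda_0^{[0,y]}\ge \lambda_0^{[0]}=\lambda_0>\lambda'$ (the first inequality is the monotonicity of $y\mapsto \lambda_0^{[x,y]}$, tacitly used earlier in the paper). The remaining details are routine dominated/monotone convergence checks; the non-entrance hypothesis plays no role in this argument, it is only used to ensure that the family $\{\nu_\lambda\}_{\lambda\in(0,\lambda_0]}$ is non-trivial, as established in Theorem \ref{thm:existenceOfQSD}(ii).
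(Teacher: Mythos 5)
Your proof is correct, and it takes a genuinely different route from the paper's. The paper reduces the likelihood-ratio order to showing that $\lambda\mapsto W^{(-\lambda)}(0,x)/W^{(-\lambda)}(0,y)$ is non-decreasing for fixed $x<y$, and establishes this by representing the ratio as $\frac{1}{W^{(-\lambda)}(x,y)}\,\bE_x\bigl[\int_{0-}^{\tau_0\wedge\tau_y^+}\mathrm{e}^{\lambda t}\,dL_t^x\bigr]$ via (the analytic extension of) the local-time potential formula \cite[Lemma 3.5]{NobaGeneralizedScaleFunc}; both factors are then manifestly non-decreasing in $\lambda$, the second because of Proposition \ref{prop:positivityOfW}(iii). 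You instead reduce to the dual formulation that $y\mapsto W^{(-\lambda)}(0,y)/W^{(-\lambda')}(0,y)$ is non-increasing, rewrite that ratio via the resolvent identity (Proposition \ref{prop:resolventIdentity}), and convert the inner ratio $W^{(-\lambda')}(u,y)/W^{(-\lambda')}(0,y)$ into the exit quantity $\bE_u[\mathrm{e}^{\lambda'\tau_0^-},\tau_0^-<\tau_y^+]$ via the extended Proposition \ref{prop:exitProblem1}; monotonicity in $y$ is then an expanding-event/expanding-domain observation. Your route has the advantage of staying entirely within the toolbox already developed in this paper (resolvent identity and exit formula) and avoiding the appeal to the external potential-density formula; the paper's route is shorter once that lemma is taken as given. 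The points you flag as delicate — positivity of the denominator for $\lambda'<\lambda_0$, non-negativity of $W^{(-\lambda)}(0,\cdot)$ for $\lambda\le\lambda_0$ via Remark \ref{Rem502}, and the legitimacy of extending \eqref{eq56} to $q=-\lambda'$ using $\lambda'<\lambda_0\le\lambda_0^{[0,y]}$ — are exactly the right ones, and all are covered by results in the paper. Your closing observation that the non-entrance hypothesis enters only to make $\{\nu_\lambda\}_{\lambda\in(0,\lambda_0]}$ the family of QSDs (Theorem \ref{thm:existenceOfQSD}(ii)), not in the monotonicity argument itself, is also accurate and worth noting.
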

}
{
	\begin{proof}
		It is enough to show for every fixed $x,y \in I$ with $x < y$ that
		the function
		\begin{align}
			f(\lambda) := \frac{W^{(-\lambda)}(0,x)}{W^{(-\lambda)}(0,y)} \quad (\lambda \in (0,\lambda_{0}]) \label{}
		\end{align}
		is non-decreasing.
		From \cite[Lemma 3.5]{NobaGeneralizedScaleFunc} and its analytic extension, it holds
		\begin{align}
			f(\lambda) = \frac{1}{W^{(-\lambda)}(x,y)} \bE_{x}\left[ \int_{0-}^{\tau_{0} \wedge \tau_{y}^{+}} \mathrm{e}^{\lambda t}dL_{t}^{x} \right]. \label{}
		\end{align}
		Since obviously $\lambda_{0}^{[x,y]} \geq \lambda_{0}$,
		the RHS is non-decreasing from Proposition \ref{prop:positivityOfW} (iii).
	\end{proof}
}

\subsection{Yaglom limit for the entrance boundary case}

In this subsection, we assume the boundary $\ell$ is entrance.
Applying the $R$-theory for general Markov Processes,
we can show under some regularity assumptions the unique QSD $\nu_{\lambda_{0}}$ is the Yaglom limit {(see \eqref{eq68})}.
For the results and terminology on $R$-theory we use below, see Tuominen and Tweedie \cite{TuominenTweedie}.

Denote a transition semigroup of $X$ killed at $0$ as
\begin{align}
	P_{t}(x,du) := \bP_{x}[X_{t} \in du, \tau_{0} > t] \quad (t \geq 0,\ u \in I_{>0}, \ x \in I_{> 0}). \label{}
\end{align}
We denote by $\cB$ the set of Borel sets on $I_{>0}$.
In order to apply the $R$-theory, 
we assume the following {strong continuity of the semigroup $P_{t}$}:
for every $x \in I_{>0}$ and $A \in \cB$,
\begin{align}
	\text{the function} \ (0,\infty) \ni t  \longmapsto P_{t}(x,A) \quad \text{is continuous}. \label{continuityOfTransitionProb}
\end{align}
From Remark \ref{rem:non-negativityOfPotentialDensity}, the semigroup $P_{t}$ is $m$\textit{-irreducible}, that is, for every $x \in I_{>0}$ and $A \in \cB$ such that $m(A) > 0$,
\begin{align}
	\int_{0}^{\infty}P_{t}(x,A) dt > 0. \label{}
\end{align}
Thus, from \cite[Theorem 1]{TuominenTweedie},
the semigroup $P_{t}$ is \textit{simultaneously $m$-irreducible} (see \cite[p.787]{TuominenTweedie}).
In addition, from Lemma \ref{lem:analyticExtentionOfh}, the measure
\begin{align}
	\alpha(dx) := W(0,x) m(dx) \quad (x \in I) \label{} 
\end{align}
is a finite measure and equivalent to $m$.
Then $P_{t}$ is $\alpha$\textit{-irreducible}, and
it holds for $A \in \cB$ such that $\alpha(A) = 0 \ (\Leftrightarrow m(A) = 0 )$ that
\begin{align}
	\int_{I} P_{t}(x,A) \alpha(dx) = 0 \quad (t > 0). \label{eq58}
\end{align}
Indeed, it holds from \eqref{potentialDensityOneside-c} that
\begin{align}
	\int_{0}^{\infty}dt \int_{I} P_{t}(x,A) \alpha(dx) &= \int_{A} W(0,x) m(dx) \int_{I} (W(0,u) - W(x,u)) m(du) \label{} \\
	&\leq \alpha(A) \alpha(I) = 0. \label{}
\end{align}
Then we see \eqref{eq58} holds from \eqref{continuityOfTransitionProb}.
We have already checked that the distribution $\nu_{\lambda_{0}}$ and the function $Z^{(-\lambda_{0})}$ are $\lambda_{0}$-invariant in Theorem \ref{thm:existenceOfQSD} and Proposition \ref{prop:lambdaX}, respectively.
In addition, it holds from Proposition \ref{prop:lambdaX} that
\begin{align}
	\int_{I} Z^{(-\lambda_{0})}(u) \nu_{\lambda_{0}}(du) = \lambda_{0}\rho < \infty. \label{}
\end{align}
Thus, the semigroup $P_{t}$ is \textit{$\lambda_{0}$-positive recurrent} (see \cite[p.792]{TuominenTweedie}).
Now we can strengthen the convergence \eqref{meanYaglom} and show that $\nu_{\lambda_{0}}$ is the Yaglom limit.

\begin{Thm} \label{thm:YaglomLimit}
	The following holds:
	\begin{align}
		\lim_{t\to \infty} \sup_{A \in \cB} \left|\mathrm{e}^{\lambda_{0}t}\bP_{x}[X_{t} \in A,\tau_{0} > t] - \frac{Z^{(-\lambda_{0})}(x)}{\lambda_{0} \rho} \nu_{\lambda_{0}}(A)\right| = 0. \label{eq40}
	\end{align}
	In particular, the QSD $\nu_{\lambda_{0}}$ is the Yaglom limit {in the total variation sense}, that is,
	\begin{align}
		%\color{blue}
		\lim_{t \to \infty} \sup_{A \in \cB}|\bP_{x}[X_{t} \in A \mid \tau_{0} > t] - \nu_{\lambda_{0}}(A)| = 0. \quad (x \in I_{>0}). \label{eq68}
	\end{align}
\end{Thm}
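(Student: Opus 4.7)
The plan is to derive \eqref{eq40} as a direct application of the $R$-theory convergence result of Tuominen and Tweedie \cite{TuominenTweedie}, and then to obtain \eqref{eq68} from \eqref{eq40} by a simple normalization argument.

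First, I would note that essentially all the hypotheses needed to invoke the main total-variation convergence theorem of \cite{TuominenTweedie} for $\lambda$-positive recurrent semigroups have already been verified in the paragraphs immediately preceding the statement: the semigroup $P_{t}$ is strongly continuous in $t$ by assumption \eqref{continuityOfTransitionProb}; it is simultaneously $m$-irreducible (hence $\alpha$-irreducible for the finite equivalent measure $\alpha$); it is $\lambda_{0}$-positive recurrent with $\lambda_{0}$-invariant probability measure $\nu_{\lambda_{0}}$ and $\lambda_{0}$-invariant function $Z^{(-\lambda_{0})}$ satisfying the finiteness $\int_{I}Z^{(-\lambda_{0})}\,d\nu_{\lambda_{0}}=\lambda_{0}\rho\in(0,\infty)$. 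Therefore the corresponding convergence theorem of \cite{TuominenTweedie} applies and yields
\begin{align}
	\lim_{t \to \infty} \sup_{A \in \cB}\left| \mathrm{e}^{\lambda_{0}t}P_{t}(x,A) - \frac{Z^{(-\lambda_{0})}(x)}{\int_{I}Z^{(-\lambda_{0})}d\nu_{\lambda_{0}}}\nu_{\lambda_{0}}(A) \right| = 0 \quad (x \in I_{>0}), \label{}
\end{align}
which, together with $\int Z^{(-\lambda_{0})}\,d\nu_{\lambda_{0}}=\lambda_{0}\rho$, is exactly \eqref{eq40}.

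Next, to deduce \eqref{eq68}, I would specialize \eqref{eq40} to $A = I_{>0}$. Since $\nu_{\lambda_{0}}$ is a probability measure on $I_{>0}$, this gives
\begin{align}
	\lim_{t \to \infty} \mathrm{e}^{\lambda_{0}t}\bP_{x}[\tau_{0} > t] = \frac{Z^{(-\lambda_{0})}(x)}{\lambda_{0}\rho} \in (0,\infty), \label{}
\end{align}
where positivity of the limit follows from Proposition \ref{prop:lambdaX}(i). Dividing the quantity inside the supremum in \eqref{eq40} by $\mathrm{e}^{\lambda_{0}t}\bP_{x}[\tau_{0}>t]$ (which is bounded away from $0$ for large $t$) and using uniformity in $A$ together with the trivial bound $\nu_{\lambda_{0}}(A)\leq 1$, one obtains
\begin{align}
	\sup_{A \in \cB}\left| \bP_{x}[X_{t} \in A \mid \tau_{0} > t] - \nu_{\lambda_{0}}(A) \right| \longrightarrow 0 \quad (t \to \infty), \label{}
\end{align}
which is \eqref{eq68}.

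The main (and really only) substantive point is to confirm that the version of the Tuominen--Tweedie convergence theorem being invoked gives \emph{total variation} uniformity in $A$, rather than mere setwise convergence; this should be a matter of citing the appropriate statement in \cite{TuominenTweedie} (their main result in the $\lambda$-positive recurrent case). No new analytic computation involving scale functions is needed at this stage, since all scale-function input (positivity and strict monotonicity of $Z^{(-\lambda_{0})}$, finiteness of $\rho$, $\lambda_{0}$-invariance, $\lambda_{0}$-positive recurrence) has been assembled in Proposition \ref{prop:lambdaX} and Corollary \ref{cor:lambdaZeroPositivity}.
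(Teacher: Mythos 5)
Your proposal captures the high-level structure — invoke $\lambda_{0}$-positive recurrence and the Tuominen--Tweedie theory, then normalize — and the derivation of \eqref{eq68} from \eqref{eq40} matches the paper's. However, there is a genuine gap in the first step.

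The convergence result you are citing from \cite{TuominenTweedie} (their (5.9), p.~802) gives $\lim_{t\to\infty} d(x,t)=0$ only for $m$-almost every $x\in I_{>0}$, not for every $x$, where $d(x,t)$ is the supremum in \eqref{eq40}. Your proposal silently upgrades this $m$-a.e.\ statement to an everywhere statement, which is exactly the nontrivial step that remains. The paper devotes the bulk of the proof to this extension: given $y\in I_{>0}$, pick $x\in(0,y)$ from the $m$-full set where the convergence holds, then split $\bP_{y}[X_{t}\in A,\tau_{0}>t]$ according to whether $\tau_{x}\le t$. The contribution from $\{\tau_{x}>t\}$ is controlled by Chebyshev's inequality and the strict inequality $\lambda_{0}^{[x]}(y)>\lambda_{0}$ from Proposition \ref{prop:lambdaX}(ii), which gives geometric decay $\mathrm{e}^{\lambda_{0}t}\bP_{y}[\tau_{x}>t]\le \mathrm{e}^{-(\lambda-\lambda_{0})t}g^{(-\lambda)}(x,y)\to 0$; the contribution from $\{\tau_{x}\le t\}$ is handled by conditioning at $\tau_{x}$, the boundedness $\sup_{t\ge 0}d(x,t)<\infty$, and dominated convergence. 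Without this argument, \eqref{eq40} is only established $m$-a.e.\ and the theorem as stated (for every $x\in I_{>0}$) is not proved. Note also that this extension step is itself where the scale-function machinery (namely Proposition \ref{prop:lambdaX}(ii)) re-enters, contrary to your remark that no further scale-function input is needed.
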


\begin{proof}
	For $x \in I_{>0}$ and $t \geq 0$, set
	\begin{align}
		d(x,t) := \sup_{A \in \cB} \left|\mathrm{e}^{\lambda_{0}t}\bP_{x}[X_{t} \in A,\tau_{0} > t] - \frac{Z^{(-\lambda_{0})}(x)}{\lambda_{0} \rho} \nu_{\lambda_{0}}(A) \right|
	\end{align}
	From (5.9) in \cite[p.802]{TuominenTweedie}, it holds $\lim_{t \to \infty} d(x,t) = 0$ for $m$-a.e.\ $x \in I_{>0}$.
	It is enough to show that this is extended to $I_{>0}$.
	For every $y \in I_{>0}$, we can take $x \in (0,y)$ so that $\lim_{t \to \infty} d(x,t) = 0$.
	From Proposition \ref{prop:lambdaX} (ii), it holds for $A \in \cB$ and $\lambda \in (\lambda_{0}, \lambda_{0}^{[x]}(y))$
	\begin{align}
		& \sup_{A \in \cB}|\mathrm{e}^{\lambda_{0}t} \bP_{y}[X_{t} \in A, \tau_{0} > t] - \mathrm{e}^{\lambda_{0}t}\bP_{y}[X_{t} \in A, \tau_{0} > t, \tau_{x} \leq t]| \label{} \\
		\leq & \mathrm{e}^{\lambda_{0} t} \bP_{y}[\tau_{x} > t] \label{} \\
		\leq &\mathrm{e}^{-(\lambda - \lambda_{0})t} g^{(-\lambda)}(x,y) \xrightarrow[]{t \to \infty} 0, \label{eq67}
	\end{align}
	where in the last inequality we used Chebyshev's inequality.
	From \eqref{eq53} and the Markov property, we have
	\begin{align}
	%\begin{aligned}
		& \sup_{A \in \cB} \left| \mathrm{e}^{\lambda_{0}t} \bP_{y}[X_{t} \in A, \tau_{0} > t, \tau_{x} \leq t] - \frac{Z^{(-\lambda_{0})}(y)}{\lambda_{0}\rho} \nu_{\lambda_{0}}(A) \right| \label{} \\
		\leq & \int_{0}^{t} d(x,t-s) \mathrm{e}^{\lambda_{0}s}\bP_{y}[\tau_{x} \in ds] + \bE_{y}[\mathrm{e}^{\lambda_{0}\tau_{x}}, \tau_{x} > t]. 
		%\end{aligned}
		\label{eq66}
	\end{align}
	Since it clearly holds $\sup_{t \geq 0} d(x,t) < \infty$, we have from the dominated convergence theorem $\eqref{eq66} \to 0 \ (t \to \infty)$.
	Combining \eqref{eq67} and \eqref{eq66}, we obtain $\lim_{t \to \infty} d(y,t) = 0$.
	{From the triangle inequality, we easily see for $x \in I_{>0}$
	\begin{align}
		\sup_{A \in \cB}|\bP_{x}[X_{t} \in A \mid \tau_{0} > t] - \nu_{\lambda_{0}}(A) |
		\leq \frac{2d(x,t)}{\mathrm{e}^{\lambda_{0}t}\bP_{x}[\tau_{0} > t]}. \label{}
	\end{align}
	Since it holds $\lim_{t \to \infty}\mathrm{e}^{\lambda_{0}t}\bP_{x}[\tau_{0} > t] = Z^{(-\lambda_{0})}(x)/(\lambda_{0}\rho) > 0$ from \eqref{eq40},
	we obtain \eqref{eq68}.
	}
\end{proof}

% \appendix

% \section{Existence of quasi-stationary distribution: Two-side exit case}

% Suppose the boundary $\ell$ is accessible; $\lim_{y \to \ell-}\bP_{x}[\tau_{y} < \infty] > 0$.
% Set 
% \begin{align}
% 	\lambda_{0} := \sup \{ \lambda \geq 0 \mid \bE_{x}[\mathrm{e}^{\lambda (\tau_{0}\wedge \tau_{\ell})} ] < \infty \quad \text{for every $x \in (0,\ell)$} \}. \label{}
% \end{align}
% Let $f^{(q)}(x) := \lim_{z \to \ell-} \bE_{x}[\mathrm{e}^{-q\tau_{0}}, \tau_{0} < \tau_{z}] \ (q \geq 0, \ x \in [0,\ell))$.
% Then from Theorem \ref{thm:potentialDensity} we have
% \begin{align}
% 	\int_{0}^{\infty} \mathrm{e}^{-qt} \bP_{x}[X_{t} \in dy, \tau_{0} \wedge \tau_{\ell} > t]dt = v^{(q)}(x,y){m(dy)} \label{}
% \end{align}
% for
% \begin{align}
% 	v^{(q)}(x,y) := f^{(q)}(x)W^{(q)}(0,y) - W^{(q)}(x,y). \label{}
% \end{align}

% \begin{Lem}
	
% \end{Lem}

\begin{appendix}
	\section{Existence of quasi-stationary distributions: Accessible case} \label{appendix:twoSideExit}

	%{\color{blue} [I changed the name from "Two-side exit case" since I added the case where the right boundary $\ell$ is reflecting.
	%Though I changed many parts of this appendix, the changes are not essential.]}

	In this appendix, we suppose the state space $I = [0,\ell] \ (\ell \in \bR)$,
	and the boundary $\ell$ is accessible:
	\begin{align}
		\bP_{x}[\tau_{\ell} < \tau_{0}] > 0 \quad (x \in (0,\ell)). \label{}
	\end{align}
	We consider the existence of a QSD when $X$ is killed at $\tau_{0}$ or $\tau_{0} \wedge \tau_{\ell}$,
	that is, a distribution $\nu$ on $I$ such that
	\begin{align}
		\bP_{\nu}[X_{t} \in dy \mid \tau > t] = \nu(dy) \quad \text{for every $t \geq 0$} \label{}
	\end{align}
	for $\tau = \tau_{0}$ or $\tau_{0} \wedge \tau_{\ell}$.
	Though most of the results in this section can be obtained by a slight modification of the arguments for the inaccessible case discussed in Section \ref{section:QSD},
	we outline the proofs of some results for completeness.
	The two cases $\tau_{0}$ and $\tau_{0} \wedge \tau_{\ell}$ can be treated almost identically.
	Specifically, by replacing the formula \eqref{potentialDensity} with \eqref{a001} in the proof for $\tau_{0} \wedge \tau_{\ell}$,
	we can obtain the proof for $\tau_{0}$.
	Therefore, we will first outline the proof for $\tau_{0} \wedge \tau_{\ell}$, and then only state the main results which hold for $\tau_{0}$.

	Let us consider $X$ is killed at $\tau := \tau_{0}\wedge \tau_{\ell}$.
	We suppose the following irreducibility:
	\begin{align}
		\bP_{x}[\tau_{y} < \tau] > 0 \quad (x \in (0,\ell), \ y \in I). \label{eq75} 
	\end{align}
	We note that $\bP_{x}[\tau < \infty] = 1$ from Proposition \ref{prop:integrabilityOfW}.
	Set for $x \in [0,\ell)$ and $y \in (x,\ell)$
	\begin{align}
		\kappa_{0}^{[x]}(y) := \sup \{ \lambda \geq 0 \mid \bE_{y}[\mathrm{e}^{\lambda \tau_{x}^{-}},\tau_{x}^{-} < \tau_{\ell}] < \infty \} \in [0,\infty]. \label{}
	\end{align}
	As in Proposition \ref{prop:finiteLambda0}, we see that the value $\kappa_{0}^{[0]}(y)$ does not depend on $y \in (0,\ell)$ and is finite under the condition \eqref{eq75}.
	We denote it by $\kappa_{0}$.
	It is obvious that $\kappa_{0} = \lambda^{[0,\ell]}$,
	and from that and Proposition \ref{prop:positivityOfW}, we have $W^{(-\lambda)}(x,y) > 0$ for $\lambda \in (-\infty,\kappa_{0})$ and $x,y \in I$ with $x < y$.

	The following two theorems are the main results in this appendix, where the value $\rho' \in (0,\infty)$ will be defined in \eqref{eq76}.

	\begin{Thm} \label{thm:qsdExistenceTwo}
		There exists a unique QSD
		\begin{align}
			\mu_{\kappa_{0}}(dy) := C_{\kappa_{0}} W^{(-\kappa_{0})}(0,y)m(dy) \quad (y \in (0,\ell)) \label{}
		\end{align} 
		such that $\bP_{\mu_{\kappa_{0}}}[\tau > t] = \mathrm{e}^{-\kappa_{0}t} \ (t \geq 0)$,
		where $C_{\kappa_{0}} :=(\int_{(0,\ell)}W^{(-\kappa_{0})}(0,y)m(dy))^{-1}> 0$ is the normalizing constant. 
		It also holds $W^{(-\kappa_{0})}(0,x) > 0$ for $m$-a.e.\ $x \in (0,\ell)$.
	\end{Thm}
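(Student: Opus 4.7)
The plan is to adapt the strategy of Section \ref{section:QSD} to the two-sided killing setting, replacing the one-sided potential density \eqref{potentialDensityOneside-c} by the two-sided formula \eqref{potentialDensity} from Theorem \ref{thm:potentialDensity}. The main obstacle is to prove the boundary vanishing $W^{(-\kappa_0)}(0,\ell) = 0$, which plays here the role of $h(\lambda_0;0) = 0$ in the entrance case of Section \ref{section:QSD}; once this is in hand the rest reduces to algebraic manipulations with the resolvent identity \eqref{eq78}.

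To establish the vanishing, I would use the meromorphic identity (obtained by analytic extension of Proposition \ref{prop:exitProblem1})
\begin{align}
\bE_y[e^{-q\tau_0^-},\,\tau_0^- < \tau_\ell^+] = \frac{W^{(q)}(y,\ell)}{W^{(q)}(0,\ell)} \quad (y \in (0,\ell)).
\end{align}
The left-hand side is the Laplace transform of the positive finite measure $\bP_y[\tau_0^- \in dt,\,\tau_0^- < \tau_\ell^+]$, whose abscissa of convergence is $-\kappa_0$ by the definition of $\kappa_0$ together with the irreducibility \eqref{eq75}. The Pringsheim--Landau theorem for Laplace transforms of positive measures forces $q = -\kappa_0$ to be a singular point of any analytic continuation, and since the right-hand side is meromorphic on $\bC$ the singularity can only come from a zero of the denominator, giving $W^{(-\kappa_0)}(0,\ell) = 0$.

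Next I would check that $\mu_{\kappa_0}$ is well-defined and is a QSD. Substituting $W^{(-\kappa_0)}(0,\ell) = 0$ into the resolvent identity \eqref{eq78} for $(q,r) = (-\kappa_0, 0)$ and eliminating $W(0,y)$ yields, for every $y \in (0,\ell)$,
\begin{align}
W^{(-\kappa_0)}(0,y) = \kappa_0 \int_{(0,\ell)} W^{(-\kappa_0)}(0,u)\bigl[\tfrac{W(0,y)W(u,\ell)}{W(0,\ell)} - \mathbf{1}_{u<y}W(u,y)\bigr]m(du),
\end{align}
where the bracket is strictly positive by Theorem \ref{thm:potentialDensity} and \eqref{eq75}; combined with Corollary \ref{cor:ratioLimitScaleFunc} near $0$, a short contradiction argument (a zero of $W^{(-\kappa_0)}(0,\cdot)$ at a single point would force the function to vanish $m$-a.e.) then gives $W^{(-\kappa_0)}(0,y) > 0$ for every $y \in (0,\ell)$, so that $C_{\kappa_0} \in (0,\infty)$. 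By Laplace inversion in time, the QSD property for $\mu_{\kappa_0}$ reduces to checking
\begin{align}
\int_{(0,\ell)} W^{(-\kappa_0)}(0,x)\left[\frac{W^{(q)}(0,y)W^{(q)}(x,\ell)}{W^{(q)}(0,\ell)} - W^{(q)}(x,y)\right]m(dx) = \frac{W^{(-\kappa_0)}(0,y)}{q+\kappa_0},
\end{align}
which follows from two applications of \eqref{eq78} and the vanishing $W^{(-\kappa_0)}(0,\ell) = 0$ that causes the would-be boundary term to drop out.

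Finally, for uniqueness I would adapt Lemma \ref{lem:qsdCharacterization-c} using Theorem \ref{thm:potentialDensity} in place of the one-sided potential density: any QSD $\nu$ with killing rate $\lambda > 0$ is absolutely continuous with respect to $m$ with density satisfying a Volterra equation whose unique solution by Corollary \ref{Cor404} is proportional to $W^{(-\lambda)}(0,\cdot)$. Running the computation of the previous paragraph in reverse, the QSD condition then forces $W^{(-\lambda)}(0,\ell) = 0$. Since $\bE_\nu[e^{\lambda'\tau_0^-},\,\tau_0^- < \tau_\ell^+] \leq \bE_\nu[e^{\lambda'\tau}] < \infty$ for every $\lambda' < \lambda$, the definition of $\kappa_0$ forces $\lambda \leq \kappa_0$; strict monotonicity of $\lambda \mapsto W^{(-\lambda)}(0,\ell)$ on $(-\infty,\kappa_0]$ (Proposition \ref{prop:positivityOfW}(iii)) together with $W(0,\ell) > 0$ then pins the unique root in this range at $\lambda = \kappa_0$, giving uniqueness.
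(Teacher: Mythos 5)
Your proposal is essentially correct and follows the same overall logic as the paper, but it makes two things more explicit. First, the paper obtains $W^{(-\kappa_0)}(0,\ell)=0$ by citing Lemma \ref{lem:kappa0Characterization} and Corollary \ref{cor:kappaZeroPositivity} (which are themselves proved by transcribing \cite[Lemma 4.9]{QSD_downward_skip-free}); you instead argue directly via the Pringsheim--Landau theorem applied to the Laplace transform $\bE_y[\mathrm e^{-q\tau_0^-},\tau_0^-<\tau_\ell^+]=W^{(q)}(y,\ell)/W^{(q)}(0,\ell)$. That is the same underlying mechanism, just unpacked. One small point worth spelling out there: Landau only says $q=-\kappa_0$ is a singularity of the left side; since the right side is a ratio of entire functions, a singularity at $-\kappa_0$ forces the denominator to vanish there -- which is exactly the contrapositive you invoke, and it is valid because $W^{(q)}(0,\ell)\neq 0$ would make the ratio holomorphic at $-\kappa_0$.

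Second, your positivity argument via the resolvent identity \eqref{eq78} together with the two-sided potential density and Corollary \ref{cor:ratioLimitScaleFunc} actually yields $W^{(-\kappa_0)}(0,y)>0$ for \emph{every} $y\in(0,\ell)$, which is strictly stronger than the paper's statement ($m$-a.e.\ positivity). The paper instead derives $m$-a.e.\ positivity directly from the QSD property and the potential density. Your chain -- nonnegativity of $W^{(-\kappa_0)}(0,\cdot)$ from Remark \ref{Rem502} with $\kappa_0=\lambda_0^{[0,\ell]}$, strict positivity of the kernel $\frac{W(0,y)W(u,\ell)}{W(0,\ell)}-W(u,y)=\bE_u[\int_{(0,\tau]}dL_t^y]$ from irreducibility \eqref{eq75}, and the contradiction with Corollary \ref{cor:ratioLimitScaleFunc} near $0$ -- is sound, and in fact mirrors the technique the paper uses in Corollary \ref{cor:non-negativityOfScale-entrance-c} for the inaccessible case. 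The Laplace-inversion computation and the uniqueness argument via strict monotonicity of $q\mapsto W^{(q)}(0,\ell)$ on $[-\kappa_0,\infty)$ (Proposition \ref{prop:positivityOfW}(iii)) coincide with the paper's appeal to Lemma \ref{lem:scaleFuncGivesQSD-entrance-c} and the characterization \eqref{eq45-two}, respectively. In short, same proof, but you make the singularity step self-contained and you sharpen the positivity claim.
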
	

	\begin{Thm} \label{thm:YaglomLimitTwo}
		For a function $f: (0,\ell) \to \bR$ such that $\int_{(0,\ell)}|f(u)|W^{(-\kappa_{0})}(u,\ell)m(du) < \infty$, it holds
		\begin{align}
			\lim_{t \to \infty} \frac{1}{t} \int_{0}^{t} \mathrm{e}^{\kappa_{0}s} \bE_{x}[f(X_{s}), \tau > s]ds = \frac{W^{(-\kappa_{0})}(x,\ell)}{\rho'}\int_{(0,\ell)}f(u) W^{(-\kappa_{0})}(u,\ell)m(du). \label{}
		\end{align}
		Suppose in addition that for every $x \in (0,\ell)$ and $A \in \cB(0,\ell)$, where $\cB(0,\ell)$ denotes the Borel $\sigma$-field on $(0,\ell)$, it holds that
		\begin{align}
			\text{the function} \ (0,\infty) \ni t  \longmapsto \bP_{x}[X_{t} \in A, \tau > t] \quad \text{is continuous}. \label{continuityOfTransitionProb-two}
		\end{align}
		Then we have the following convergence:
		\begin{align}
			\lim_{t\to \infty} \sup_{A \in \cB(0,\ell)} \left|\mathrm{e}^{\kappa_{0}t}\bP_{x}[X_{t} \in A,\tau > t] - \frac{W^{(-\kappa_{0})}(x,\ell)}{C_{\kappa_{0}}\rho'} \mu_{\kappa_{0}}(A)\right| = 0. \label{eq40-two}
		\end{align}
		In particular, the QSD $\mu_{\kappa_{0}}$ is the Yaglom limit in the total variation sense, that is,
		\begin{align}
			\lim_{t \to \infty} \sup_{A \in \cB(0,\ell)}|\bP_{x}[X_{t} \in A \mid \tau > t] - \mu_{\kappa_{0}}(A)| = 0. \quad (x \in (0,\ell)). \label{eq68-two}
		\end{align}
	\end{Thm}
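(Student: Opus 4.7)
The plan is to parallel the proofs of Theorems \ref{thm:meanYaglomLimit} and \ref{thm:YaglomLimit}, replacing the one-sided potential density \eqref{potentialDensityOneside-c} with the two-sided formula from Theorem \ref{thm:potentialDensity}. Combining \eqref{potentialDensityFormula} with \eqref{potentialDensity} and analytically extending in $q$ gives, for $q > -\kappa_{0}$,
\begin{align}
\int_0^\infty \mathrm{e}^{-qt}\bE_x[f(X_t),\tau>t]\,dt = \int_{(0,\ell)}\!\left(\frac{W^{(q)}(0,u)W^{(q)}(x,\ell)}{W^{(q)}(0,\ell)} - W^{(q)}(x,u)\right)f(u)\,m(du).
\end{align}
The task is to isolate the simple pole of this expression at $q=-\kappa_0$ and then convert the Laplace asymptotics into time-domain statements.

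For the mean Ces\`aro limit I would first establish the accessible-case analogue of Proposition \ref{prop:lambdaX}: the entire function $q \mapsto W^{(q)}(0,\ell)$ has a simple zero at $q=-\kappa_0$, its derivative there is the quantity $\rho'$ of \eqref{eq76}, and $W^{(-\kappa_0)}(\cdot,\ell)$ is a strictly positive, bounded, continuous function that is $\kappa_0$-invariant for the killed process. The simple-zero property is the counterpart of \eqref{eq45} and would follow from Lemma \ref{lem:r-recurrence} applied to the ratio $g^{(q)}(0,x) = W^{(q)}(x,\ell)/W^{(q)}(0,\ell)$; the $\kappa_0$-invariance would be read off from the Laplace identity above with $f=W^{(-\kappa_0)}(\cdot,\ell)$ together with the resolvent identity \eqref{eq78}, exactly as \eqref{eq64} was derived in Proposition \ref{prop:lambdaX}. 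Passing $q\to-\kappa_0{+}$ and applying Karamata's Tauberian theorem then yields the mean ergodic limit, with the integrability hypothesis on $f$ and the bounds of Proposition \ref{prop:defOfM} furnishing the dominated-convergence step.

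For the Yaglom limit under \eqref{continuityOfTransitionProb-two} I would invoke the $R$-theory of Tuominen and Tweedie \cite{TuominenTweedie} as in Theorem \ref{thm:YaglomLimit}. The irreducibility \eqref{eq75} together with Proposition \ref{prop:exitProblem1} forces the potential density $r^{(q)}(x,u)$ to be strictly positive on $(0,\ell)^2$, so the killed semigroup is $m$-irreducible and, under \eqref{continuityOfTransitionProb-two}, simultaneously $\alpha$-irreducible for the finite measure $\alpha(dx):=W(0,x)W(x,\ell)\,m(dx)$ (finiteness follows from Proposition \ref{prop:integrabilityOfW} and monotonicity of $W(\cdot,\ell)$). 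The pair $(\mu_{\kappa_0},W^{(-\kappa_0)}(\cdot,\ell))$ supplies a $\kappa_0$-invariant probability measure and a bounded right eigenfunction with finite pairing $C_{\kappa_0}\rho'<\infty$, giving $\kappa_0$-positive recurrence. Tuominen-Tweedie's (5.9) then delivers \eqref{eq40-two} for $m$-a.e.\ $x$, and \eqref{eq68-two} follows by dividing by $\mathrm{e}^{\kappa_0 t}\bP_x[\tau>t]$, which tends to $W^{(-\kappa_0)}(x,\ell)/(C_{\kappa_0}\rho')$ (apply \eqref{eq40-two} with $A=(0,\ell)$).

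The hard part will be the extension of \eqref{eq40-two} from $m$-a.e.\ $x$ to every $x\in(0,\ell)$. I would mimic the final step of Theorem \ref{thm:YaglomLimit}: for arbitrary $y\in(0,\ell)$, choose a nearby $x$ at which convergence is already known and decompose along $\{\tau_x\leq t\}\cup\{\tau_x>t\}$. The second piece decays faster than $\mathrm{e}^{-\kappa_0 t}$ by Chebyshev's inequality applied with some $\lambda\in(\kappa_0,\lambda_0^{[x,\ell]}(y))$; the existence of such $\lambda$ is the accessible-case counterpart of Proposition \ref{prop:lambdaX}(ii) and needs a separate verification based on the strict-inequality characterization of $\kappa_0$ and the analogue for $\lambda_0^{[x,\ell]}(y)$. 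The first piece is controlled by the strong Markov property at $\tau_x$ together with the uniform bound $\sup_t d(x,t)<\infty$, just as in \eqref{eq66}. A secondary technical point is the clean identification of $\rho'$ as $(d/dq)W^{(q)}(0,\ell)|_{q=-\kappa_0}$, which I would obtain from an integrated form of the resolvent identity \eqref{eq78}, as was done in the proof of Proposition \ref{prop:lambdaX}(iv).
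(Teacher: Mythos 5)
Your proposal follows essentially the same route the paper takes: the paper states that Theorem~\ref{thm:YaglomLimitTwo} is proved by repeating Theorems~\ref{thm:meanYaglomLimit} and~\ref{thm:YaglomLimit} with $Z^{(q)}(\cdot)$ replaced by $W^{(q)}(\cdot,\ell)$, and it prepares exactly the ingredients you list in Lemma~\ref{lem:kappa0Characterization} (the pole/simple-zero counterpart of Lemma~\ref{lem:r-recurrence}), Corollary~\ref{cor:kappaZeroPositivity}, and Lemma~\ref{lem:twoSideExit} (positivity, $\kappa_0$-invariance, $\kappa_0<\kappa_0^{[x]}$, and the identification of $\rho'$ via the resolvent identity). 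One small notational slip: what you call $g^{(q)}(0,x)$ should here be $\bE_x[\mathrm{e}^{-q\tau_0},\tau_0<\tau_\ell]=W^{(q)}(x,\ell)/W^{(q)}(0,\ell)$ rather than the unrestricted Laplace transform $\bE_x[\mathrm{e}^{-q\tau_0}]$ of \eqref{eq71}, since the killing is two-sided.
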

	
	For the proof, we show a statement corresponding to Lemma \ref{lem:r-recurrence}.

	\begin{Lem} \label{lem:kappa0Characterization}
		For every $x,y \in [0,\ell)$ with $x < y$, it holds $\kappa_{0}^{[x]}(y) > 0$. 
		If $\kappa_{0}^{[x]}(y) < \infty$,
		the function $\bC \ni q \mapsto W^{(q)}(y,\ell) / W^{(q)}(x,\ell)$ has a pole at $q = -\kappa_{0}^{[x]}(y)$,
		and it is the one with the minimum absolute value.
		In particular, it holds
		\begin{align}
			\lim_{q \to -\kappa_{0}^{[x]}(y)+} \bE_{y}[\mathrm{e}^{-q \tau_{x}^{-}}, \tau_{x}^{-} < \tau_{\ell} ] = \infty. \label{}
		\end{align}
	\end{Lem}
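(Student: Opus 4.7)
The plan is to mirror the proof of Lemma \ref{lem:r-recurrence}, with $W^{(q)}(y,\ell)/W^{(q)}(x,\ell)$ playing the role of $Z^{(q)}(y)/Z^{(q)}(x)$. The starting point is the analytic extension of Proposition \ref{prop:exitProblem1} (taking $z=\ell$, and noting that $\tau_{\ell}^{+}=\tau_{\ell}$ since $\ell=\sup I$): on the domain of convergence of the Laplace transform one has
\begin{align}
\bE_{y}\!\left[\mathrm{e}^{-q\tau_{x}^{-}},\tau_{x}^{-}<\tau_{\ell}\right]=\frac{W^{(q)}(y,\ell)}{W^{(q)}(x,\ell)}=:F(q).
\end{align}
The left-hand side is the Laplace transform of a finite positive measure on $[0,\infty)$, whereas $F$ is meromorphic on $\bC$ as a ratio of two entire functions.

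For the positivity $\kappa_{0}^{[x]}(y)>0$: by Proposition \ref{Prop202}, $W(x,\ell)$ and $W(y,\ell)$ are both positive, so $F$ is analytic on some disk $|q|<r$ around the origin. For real $q_{0}\in(-r,0)$, monotone convergence as $q\downarrow q_{0}$ together with continuity of $F$ at $q_{0}$ gives
\begin{align}
\bE_{y}\!\left[\mathrm{e}^{-q_{0}\tau_{x}^{-}},\tau_{x}^{-}<\tau_{\ell}\right]=F(q_{0})<\infty,
\end{align}
so $\kappa_{0}^{[x]}(y)\geq r>0$.

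Now suppose $\kappa_{0}^{[x]}(y)<\infty$ and write $\lambda^{*}:=\kappa_{0}^{[x]}(y)$. On the half-plane $\Re q>-\lambda^{*}$ the Laplace transform converges absolutely (being dominated by its value at $\Re q$), hence is analytic there; by the identity theorem it agrees with $F$ on that half-plane, so $F$ has no poles with $\Re q>-\lambda^{*}$. Conversely, Landau's theorem for Laplace transforms of positive measures forbids analytic continuation to any open neighborhood of the abscissa of convergence $-\lambda^{*}$, so $F$ must fail to be analytic at $-\lambda^{*}$; being meromorphic it has a genuine pole there, i.e.\ $W^{(-\lambda^{*})}(x,\ell)=0$. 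Any pole $q_{\ast}$ of $F$ therefore satisfies $\Re q_{\ast}\leq -\lambda^{*}$, whence $|q_{\ast}|\geq \lambda^{*}$, with equality only at $q_{\ast}=-\lambda^{*}$. This yields the minimum-modulus statement, and the final limit follows at once: on the real axis $F(q)$ is positive for $q>-\lambda^{*}$, and since it has a pole at $-\lambda^{*}$ it blows up to $+\infty$ as $q\downarrow -\lambda^{*}$.

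The main obstacle is the Landau-type step: one needs to exclude the possibility that $F$ has only a removable singularity at the abscissa $-\lambda^{*}$. I would invoke the classical form valid for finite positive measures on $[0,\infty)$ (as in Widder's treatment of Laplace transforms), and note that the analogue in Lemma \ref{lem:r-recurrence}, whose proof is omitted here and referred to \cite{QSD_downward_skip-free}, rests on exactly the same mechanism applied to $Z^{(q)}$.
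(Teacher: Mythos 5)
Your overall strategy is the one the paper intends: mirror Lemma~\ref{lem:r-recurrence} (itself deferred to \cite[Lemma~4.9]{QSD_downward_skip-free}) with $Z^{(q)}(\cdot)$ replaced by $W^{(q)}(\cdot,\ell)$, using the identity $\bE_{y}[\mathrm{e}^{-q\tau_{x}^{-}},\tau_{x}^{-}<\tau_{\ell}]=W^{(q)}(y,\ell)/W^{(q)}(x,\ell)$, meromorphicity of the right-hand side, and the Landau--Pringsheim theorem to locate the singularity on the negative real axis. The pole-structure and minimum-modulus argument and the final blow-up step are all fine.

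There is, however, a circularity in your argument for the positivity $\kappa_{0}^{[x]}(y)>0$. You invoke monotone convergence to deduce $\bE_{y}[\mathrm{e}^{-q_{0}\tau_{x}^{-}},\tau_{x}^{-}<\tau_{\ell}]=\lim_{q\downarrow q_{0}}\bE_{y}[\mathrm{e}^{-q\tau_{x}^{-}},\tau_{x}^{-}<\tau_{\ell}]$ and then identify this with $\lim_{q\downarrow q_{0}}F(q)=F(q_{0})$. But the second identification requires the Laplace transform $L(q):=\bE_{y}[\mathrm{e}^{-q\tau_{x}^{-}},\tau_{x}^{-}<\tau_{\ell}]$ to equal $F(q)$ on a right-neighborhood of $q_{0}<0$; Proposition~\ref{prop:exitProblem1} only gives this a priori for $q\geq 0$, and the validity of $L=F$ for negative $q$ is precisely what is being established. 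The clean fix is to apply the same Landau--Pringsheim mechanism you use later: if $\sigma_{c}:=-\kappa_{0}^{[x]}(y)$ satisfied $\sigma_{c}\in(-r,0]$, then $L$ would be analytic on $\Re q>\sigma_{c}$, coincide with $F$ there by the identity theorem, and hence admit an analytic continuation across the real point $\sigma_{c}$ (since $F$ is analytic on $|q|<r$), contradicting the Landau singularity at the abscissa of convergence; therefore $\sigma_{c}\leq -r$. With this correction the proof is complete and follows the paper's route.
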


	\begin{proof}
		Since it holds from Proposition \ref{prop:exitProblem1} that 
		\begin{align}
			\bE_{y}[\mathrm{e}^{-q\tau_{x}^{-}}, \tau_{x}^{-} < \tau_{\ell}] = \frac{W^{(q)}(y,\ell)}{W^{(q)}(x,\ell)} \quad (q > -\kappa_{0}^{[x]}(y)), \label{}
		\end{align}
		by replacing $Z^{(q)}$ with $W^{(q)}(\cdot,\ell)$ in the proof of \cite[Lemma 4.9]{QSD_downward_skip-free}, we can show the desired result.
		% From Theorem \ref{thm:potentialDensity}, we have for $q \geq 0$, $x \in [0,\ell)$ and $y \in (x,\ell)$
		% \begin{align}
		% 	&\bE_{y}[\mathrm{e}^{-q(\tau_{x}^{-} \wedge \tau_{\ell})}] \label{} \\
		% 	= &1 - q \left(\frac{W^{(q)}(y,\ell)}{W^{(q)}(x,\ell)} \int_{(x,\ell)}W^{(q)}(x,u)m(du) - \int_{(y,\ell)} W^{(q)}(y,u)m(du)\right). \label{}
		% \end{align}
		% From Corollary \ref{cor:ratioLimitScaleFunc} and Proposition \ref{prop:positivityOfW},
		% we have for $q \in [-\kappa_{0}^{[x]}, \infty) \cap \bR$
		% \begin{align}
		% 	\int_{(x,\ell)} W^{(q)}(x,u)m(du) \geq \int_{(x,\ell)} W^{(-\kappa_{0}^{[x]})}(x,u)m(du) > 0. \label{}
		% \end{align}
		% By replacing $Z^{(-\zeta)}(x)$ with $W^{(-\zeta)}(x,\ell)$ in the proof of Lemma \ref{lem:r-recurrence}, we can show the desired result.
	\end{proof}

	By the case $x = 0$, we have the following, whose proof is essentially the same as Corollary \ref{cor:lambdaZeroPositivity} and we omit it.

	\begin{Cor} \label{cor:kappaZeroPositivity}
		It holds $\kappa_{0} \in (0,\infty)$ and the process $X_{\cdot \wedge \tau}$ is $\kappa_{0}$-recurrent, that is, for every measurable set $A \subset (0,\ell)$ with $m(A) > 0$
		\begin{align}
			\int_{0}^{\infty}\mathrm{e}^{\kappa_{0}t}\bP_{x}[X_{t} \in A, \tau > t]dt = \infty \quad (x \in (0,\ell)). \label{lambdaRecurrence-two}
		\end{align}
		In addition, we have the following characterization of $\kappa_{0}$:
		\begin{align}
			\kappa_{0} = \min \{ \lambda \geq 0 \mid W^{(-\lambda)}(0,\ell) = 0 \}. \label{eq45-two}
		\end{align}
	\end{Cor}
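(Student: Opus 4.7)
The plan is to parallel the proof of Corollary \ref{cor:lambdaZeroPositivity}, replacing the role of $Z^{(q)}(0)$ by $W^{(q)}(0, \ell)$, which is the natural ``denominator'' in the two-sided setting according to Proposition \ref{prop:exitProblem1}. First I would invoke Lemma \ref{lem:kappa0Characterization} with $x = 0$ and any $y \in (0, \ell)$: it yields immediately that $\kappa_{0} > 0$, and combining with the observation recorded just after \eqref{eq75} that $\kappa_{0} < \infty$, one obtains $\kappa_{0} \in (0, \infty)$. The same lemma says that $q \mapsto W^{(q)}(y, \ell)/W^{(q)}(0, \ell)$ has its pole of minimum absolute value at $q = -\kappa_{0}$; since the numerator is an entire function (Theorem \ref{thm:WequalM}), this forces $W^{(-\kappa_{0})}(0, \ell) = 0$.

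For the characterization \eqref{eq45-two}, I would use the identification $\kappa_{0} = \lambda_{0}^{[0, \ell]}$ noted just before the statement of Theorem \ref{thm:qsdExistenceTwo}, together with Proposition \ref{prop:positivityOfW}(ii) applied to the interval $[0, \ell]$, to conclude $W^{(-\lambda)}(0, \ell) > 0$ for every $\lambda \in [0, \kappa_{0})$. Combined with $W^{(-\kappa_{0})}(0, \ell) = 0$ from the previous step, this yields \eqref{eq45-two}.

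The substantive step is the $\kappa_{0}$-recurrence, which I would obtain from the killed potential density. Theorem \ref{thm:potentialDensity} (with $x = 0$, $z = \ell$) gives, for $q \geq 0$,
\begin{align}
\int_{0}^{\infty} e^{-qt} \bP_{x}[X_{t} \in dy, \tau > t]\, dt = r^{(q)}(x, y)\, m(dy), \quad r^{(q)}(x, y) := \frac{W^{(q)}(0, y)\, W^{(q)}(x, \ell)}{W^{(q)}(0, \ell)} - W^{(q)}(x, y),
\end{align}
and this identity extends by analytic continuation to $q > -\kappa_{0}$, while $r^{(q)} \geq 0$ by the probabilistic interpretation (an analog of Remark \ref{rem:non-negativityOfPotentialDensity}). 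Letting $q \downarrow -\kappa_{0}$, monotone convergence on the left and Fatou's lemma on the right reduces the $\kappa_{0}$-recurrence \eqref{lambdaRecurrence-two} to showing that $r^{(q)}(x, y) \to \infty$ for $m$-a.e.\ $y \in (0, \ell)$. This in turn follows because $W^{(q)}(0, \ell)$ vanishes at $q = -\kappa_{0}$ while the numerator tends to $W^{(-\kappa_{0})}(0, y)\, W^{(-\kappa_{0})}(x, \ell)$, which should be strictly positive for $m$-a.e.\ $y$. The main obstacle is precisely this positivity at the critical value $q = -\kappa_{0}$: for the factor $W^{(-\kappa_{0})}(x, \ell)$, the inequality $\kappa_{0}^{[x]}(u) \geq \kappa_{0}$ (which comes from $\tau_{x}^{-} \leq \tau_{0}^{-}$ since $X$ has no negative jumps) together with Proposition \ref{prop:positivityOfW} on $[x, \ell]$ and Corollary \ref{cor:ratioLimitScaleFunc} should suffice; for the factor $W^{(-\kappa_{0})}(0, y)$, one must rule out that it vanishes on a positive $m$-measure subset of $(0, \ell)$, which is excluded by combining the analytic extension of the identity in Proposition \ref{prop:exitProblem1} with the minimality of $-\kappa_{0}$ among the poles in Lemma \ref{lem:kappa0Characterization}. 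This boundary positivity is the only step that departs meaningfully from the proof of its one-sided counterpart.
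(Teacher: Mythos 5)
Your derivations of $\kappa_{0}\in(0,\infty)$, of $W^{(-\kappa_{0})}(0,\ell)=0$, and of \eqref{eq45-two} via Lemma \ref{lem:kappa0Characterization} and Proposition \ref{prop:positivityOfW}(ii) are correct and are precisely the paper's route (the two-sided mirror of the first two lines in the proof of Corollary \ref{cor:lambdaZeroPositivity}).

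For the $\kappa_{0}$-recurrence there is a real gap. You reduce the claim to ``$W^{(-\kappa_{0})}(0,y)>0$ for $m$-a.e.\ $y$'', and your proposed proof of that positivity (extend Proposition \ref{prop:exitProblem1} analytically and evaluate at $q=-\kappa_{0}$) presupposes the strict inequality $\lambda_{0}^{[0,y]}>\kappa_{0}$ for $y<\ell$, so that $-\kappa_{0}$ actually lies inside the region of validity of the extension. But monotonicity of $z\mapsto\lambda_{0}^{[0,z]}$ gives only $\lambda_{0}^{[0,y]}\geq\kappa_{0}$; the strict inequality is exactly Lemma \ref{lem:twoSideExit}(ii) and is established only \emph{after} the present corollary, via the $\kappa_{0}$-invariance of $W^{(-\kappa_{0})}(\cdot,\ell)$. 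Indeed, the paper itself proves the $m$-a.e.\ positivity of $W^{(-\kappa_{0})}(0,\cdot)$ only in Theorem \ref{thm:qsdExistenceTwo}, and that proof uses the fact that $\mu_{\kappa_{0}}$ is a QSD, which in turn rests on $W^{(-\kappa_{0})}(0,\ell)=0$ from the very corollary you are proving. (The same issue affects your factor $W^{(-\kappa_{0})}(x,\ell)$, although there you can avoid it by observing directly from Lemma \ref{lem:kappa0Characterization} with $x=0$ that $W^{(q)}(x,\ell)/W^{(q)}(0,\ell)\to\infty$ as $q\downarrow-\kappa_{0}$.) The paper's one-sided model, Corollary \ref{cor:lambdaZeroPositivity}, avoids the issue: strict positivity of the potential density (Remark \ref{rem:non-negativityOfPotentialDensity}, or its two-sided analog via \eqref{eq75}) yields $m$-irreducibility of the killed semigroup, and the $R$-recurrence/transience dichotomy of \cite{TuominenTweedie} reduces the task to exhibiting blow-up for a \emph{single} test set; taking $A\subset(0,\delta)$ with $\delta$ small, where positivity of $W^{(-\kappa_{0})}(0,\cdot)$ is already guaranteed by Corollary \ref{cor:ratioLimitScaleFunc}, then suffices. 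Either import that dichotomy, or first establish the two-sided analog of Proposition \ref{prop:lambdaX}(i) (positivity of $W^{(-\kappa_{0})}(\cdot,\ell)$, whose proof does not use the recurrence), from which the strict pole inequality $\kappa_{0}^{[x]}>\kappa_{0}$ follows as in Lemma \ref{lem:twoSideExit}(ii), and only then feed it back into your Proposition \ref{prop:exitProblem1} argument.
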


	We prove Theorem \ref{thm:qsdExistenceTwo}.
	
	\begin{proof}[Proof of Theorem \ref{thm:qsdExistenceTwo}]
		From the same argument in Lemma \ref{lem:qsdCharacterization-c},
		if there exists a QSD $\mu$ such that $\bP_{\mu}[\tau > t] = \mathrm{e}^{-\kappa t} \ (\kappa \in (0,\kappa_{0}])$, it must be of the form $\mu(dx) = C W^{(-\kappa)}(0,x)m(dx)$ for the normalizing constant $C > 0$.
		From the same argument in Lemma \ref{lem:scaleFuncGivesQSD-entrance-c} we see that the distribution of that form is a QSD if and only if $W^{(-\kappa)}(0,\ell) = 0$.
		Thus, from Corollary \ref{cor:kappaZeroPositivity} there is a unique QSD $\mu_{\kappa_{0}}$.
		Since $\mu_{\kappa_{0}}$ is a QSD with $\bP_{\mu_{\kappa_{0}}}[\tau > t] = \mathrm{e}^{-\kappa_{0} t}$, we have from Proposition \ref{prop:exitProblem1} and Theorem \ref{thm:potentialDensity} that for $A \subset (0,\ell)$ with $m(A) > 0$
		\begin{align}
			&\frac{1}{\kappa_{0}} \int_{A}W^{(-\kappa_{0})}(0,x)m(dx) \label{} \\
			= &\int_{0}^{\infty}dt \int_{(0,\ell)} W^{(-\kappa_{0})}(0,x) \bP_{x}[X_{t} \in A, \tau > t] m(dx) \label{} \\
			= &\int_{A} W(0,u) m(du) \int_{(0,\ell)} W^{(-\kappa_{0})}(0,x) \bP_{x}[\tau_{u}^{+} < \tau_{0} < \tau_{\ell}]m(dx) > 0,
		\end{align} 
		where we note that $\bP_{x}[\tau_{u}^{+} < \tau_{0} < \tau_{\ell}] > 0 \ (x \in (0,\ell))$ by the assumption \eqref{eq75}.
		Thus, it holds $W^{(-\kappa_{0})}(0,x) > 0$ $m$-a.e.\ $x \in (0,\ell)$.
	\end{proof}

	To prove Theorem \ref{thm:YaglomLimitTwo}, we give a lemma corresponding to Proposition \ref{prop:lambdaX}.

	\begin{Lem} \label{lem:twoSideExit}
		The following holds:
		\begin{enumerate}
			\item The function $W^{(-\kappa_{0})}(\cdot,\ell)$ is $\kappa_{0}$-invariant, that is, $W^{(-\kappa_{0})}(x,\ell) > 0 \ (x \in (0,\ell))$ and
			\begin{align}
				\bE_{x}[W^{(-\kappa_{0})}(X_{t},\ell), \tau > t] = \mathrm{e}^{-\kappa_{0} t}W^{(-\kappa_{0})}(x,\ell) \quad (t \geq 0, \ x \in (0,\ell)). \label{eq63-c}
			\end{align}
			\item It holds $\kappa_{0} < \kappa_{0}^{[x]}$ for every $x \in (0,\ell)$.
			\item It holds
			\begin{align}
				\rho' := \left.\frac{d}{dq} W^{(q)}(0,\ell)\right|_{q = -\kappa_{0}} = W^{(-\kappa_{0})} \otimes W^{(-\kappa_{0})}(0,\ell) \in (0,\infty). \label{eq76}
			\end{align}
			In particular, the function $q \mapsto W^{(q)}(0, \ell)$ has a simple root at $q = -\kappa_{0}$.
		\end{enumerate}
	\end{Lem}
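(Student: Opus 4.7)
The plan is to mirror the proof of Proposition \ref{prop:lambdaX}, substituting $W^{(-\kappa_0)}(\cdot,\ell)$ for $Z^{(-\lambda_0)}$ and exploiting at every step the fact $W^{(-\kappa_0)}(0,\ell)=0$ from Corollary \ref{cor:kappaZeroPositivity}.

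For (i), I would fix $q > -\kappa_0$ and $x \in (0,\ell)$ and compute the Laplace transform
\begin{align}
\int_0^\infty e^{-qt}\,\bE_x\!\left[W^{(-\kappa_0)}(X_t,\ell),\,\tau>t\right]dt
\end{align}
using the two-sided potential density of Theorem \ref{thm:potentialDensity} (analytically extended in $q$ through Section \ref{section:analyticExtension}) for the killing $\tau = \tau_0 \wedge \tau_\ell$. This integral rewrites as
\begin{align}
\frac{W^{(q)}(x,\ell)}{W^{(q)}(0,\ell)}\,W^{(q)}\otimes W^{(-\kappa_0)}(0,\ell) \;-\; W^{(q)}\otimes W^{(-\kappa_0)}(x,\ell).
\end{align}
Applying Proposition \ref{prop:resolventIdentity} to both convolutions and using $W^{(-\kappa_0)}(0,\ell)=0$ collapses the right-hand side to $W^{(-\kappa_0)}(x,\ell)/(q+\kappa_0)$, which is the Laplace transform of $t \mapsto e^{-\kappa_0 t} W^{(-\kappa_0)}(x,\ell)$. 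Since $W^{(-\kappa_0)}(\cdot,\ell)$ is bounded and continuous (Proposition \ref{prop:continuityOfW} and dominated convergence) and $X$ is right-continuous, uniqueness of the Laplace transform yields \eqref{eq63-c}. For strict positivity, suppose $W^{(-\kappa_0)}(x_0,\ell)=0$ for some $x_0 \in (0,\ell)$; then \eqref{eq63-c} together with non-negativity of $W^{(-\kappa_0)}(\cdot,\ell)$ (from Remark \ref{Rem502} and Proposition \ref{prop:positivityOfW}) forces $W^{(-\kappa_0)}(X_t,\ell)=0$ $\bP_{x_0}$-a.s.\ on $\{\tau > t\}$. Integrating and using the two-sided potential kernel $r^{(0)}(x_0,\cdot)$ in combination with the irreducibility \eqref{eq75} to check that $r^{(0)}(x_0,y) > 0$ for $y \in (0,\ell)$, one concludes $W^{(-\kappa_0)}(y,\ell) = 0$ for $m$-a.e.\ $y \in (0,\ell)$, contradicting $W^{(-\kappa_0)}(y,\ell)/W(y,\ell)\to 1$ as $y \to \ell$ from Corollary \ref{cor:ratioLimitScaleFunc} together with Proposition \ref{Prop101}.

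For (ii), the analytic extension of Proposition \ref{prop:exitProblem1} yields
\begin{align}
\bE_y\!\left[e^{\kappa_0 \tau_x^-},\,\tau_x^- < \tau_\ell\right] \;=\; \frac{W^{(-\kappa_0)}(y,\ell)}{W^{(-\kappa_0)}(x,\ell)},
\end{align}
which is finite by (i), giving $\kappa_0 \leq \kappa_0^{[x]}(y)$; Lemma \ref{lem:kappa0Characterization} then identifies $-\kappa_0^{[x]}(y)$ as a pole of this ratio, forcing the strict inequality. For (iii), Proposition \ref{prop:resolventIdentity} together with $W^{(-\kappa_0)}(0,\ell)=0$ factorizes
\begin{align}
W^{(q)}(0,\ell) \;=\; (q+\kappa_0)\, W^{(q)} \otimes W^{(-\kappa_0)}(0,\ell),
\end{align}
with the right factor entire in $q$ by Proposition \ref{prop:commutativity}. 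Evaluating at $q = -\kappa_0$ gives the derivative formula and simplicity of the root; strict positivity of $W^{(-\kappa_0)} \otimes W^{(-\kappa_0)}(0,\ell)$ follows by combining the $m$-a.e.\ positivity of $W^{(-\kappa_0)}(0,\cdot)$ from Theorem \ref{thm:qsdExistenceTwo} with the strict positivity of $W^{(-\kappa_0)}(\cdot,\ell)$ on $(0,\ell)$ just established in (i).

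The main technical obstacle will be the positivity half of (i): the Laplace-transform identity is formal until one verifies the Fubini interchanges via the $L^1$-bounds of Proposition \ref{prop:integrabilityOfW} and \eqref{eq03}, and the irreducibility-plus-boundary-limit argument that rules out a vanishing $x_0$ requires a careful analysis of the two-sided potential kernel $r^{(0)}(x_0,y) = W(0,y)W(x_0,\ell)/W(0,\ell) - W(x_0,y)$ to confirm it charges every $m$-non-null subset of $(0,\ell)$. Once these are settled, the remainder is a direct transcription of the inaccessible-case proof.
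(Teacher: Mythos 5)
Your proposal follows the paper's proof exactly: the paper reduces (i) to the proof of Proposition~\ref{prop:lambdaX} with $Z^{(q)}$ replaced by $W^{(q)}(\cdot,\ell)$, deduces (ii) from Lemma~\ref{lem:kappa0Characterization} and (i), and deduces (iii) from the resolvent identity and (i). Your Laplace-transform computation, the collapse via $W^{(-\kappa_0)}(0,\ell)=0$, the contradiction from Corollary~\ref{cor:ratioLimitScaleFunc} near $\ell$, and the factorization $W^{(q)}(0,\ell)=(q+\kappa_0)\,W^{(q)}\otimes W^{(-\kappa_0)}(0,\ell)$ are all correct and are precisely the substitutions the authors have in mind.

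One logical slip in (ii): you write that the analytic extension of Proposition~\ref{prop:exitProblem1} yields $\bE_y[e^{\kappa_0\tau_x^-},\tau_x^-<\tau_\ell]=W^{(-\kappa_0)}(y,\ell)/W^{(-\kappa_0)}(x,\ell)$, ``which is finite by (i), giving $\kappa_0\le\kappa_0^{[x]}(y)$.'' This is circular: the identity $\bE_y[e^{\lambda\tau_x^-},\tau_x^-<\tau_\ell]=W^{(-\lambda)}(y,\ell)/W^{(-\lambda)}(x,\ell)$ holds a priori only for $\lambda<\kappa_0^{[x]}(y)$, so you cannot evaluate it at $\lambda=\kappa_0$ and then deduce $\kappa_0\le\kappa_0^{[x]}(y)$ from the finiteness of the right-hand side. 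The weak inequality $\kappa_0\le\kappa_0^{[x]}(y)$ must come first, independently, from Proposition~\ref{prop:positivityOfW}(i) (monotonicity of $u\mapsto\lambda_0^{[u,\ell]}$ together with $\kappa_0=\lambda_0^{[0,\ell]}$), or from the strong Markov decomposition $\bE_y[e^{\lambda\tau_0^-},\tau_0^-<\tau_\ell]=\bE_y[e^{\lambda\tau_x^-},\tau_x^-<\tau_\ell]\,\bE_x[e^{\lambda\tau_0^-},\tau_0^-<\tau_\ell]$. Once that is in hand, the strict inequality follows as you say: if equality held, Lemma~\ref{lem:kappa0Characterization} would force the ratio to blow up at $q=-\kappa_0$, contradicting the positivity of $W^{(-\kappa_0)}(x,\ell)$ from (i).
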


	\begin{proof}
		Since it holds $W^{(-\kappa_{0})}(\ell - \delta,\ell) > 0$ for every sufficiently small $\delta > 0$ from Corollary \ref{cor:ratioLimitScaleFunc}, the assertion (i) follows from the proof of Proposition \ref{prop:lambdaX} with $Z^{(q)}$ replaced by $W^{(q)}(\cdot,\ell)$.
		The assertion (ii) follows from Lemma \ref{lem:kappa0Characterization} and (i).
		Since it holds from Proposition \ref{prop:resolventIdentity} and (i)
		\begin{align}
			\left. \frac{d}{dq}W^{(q)}(0,\ell)\right|_{q = -\kappa_{0}} = W^{(-\kappa_{0})} \otimes W^{(-\kappa_{0})}(0,\ell) > 0. \label{}
		\end{align}
		The proof is complete.
	\end{proof}

	With the preparation above, the proof of Theorem \ref{thm:YaglomLimitTwo} is essentially the same as Theorems \ref{thm:meanYaglomLimit} and \ref{thm:YaglomLimit}, and we omit it. 

	We next consider the case where $X$ is killed at $\tau_{0}$.
	Hereafter, we suppose the following:
	\begin{align}
		\bP_{x}[\tau_{\ell} < \tau_{0}] > 0 \quad (x \in I_{>0}). \label{}
	\end{align}
	We note that from \eqref{a001}, it holds
	\begin{align}
		g^{(q)}(x) := \bE_{x}[\mathrm{e}^{-q\tau_{0}}] = \frac{Z^{(q)}(x,\ell)}{Z^{(q)}(0,\ell)} \quad (q \geq 0,\ x \in I_{>0}), \label{}
	\end{align}
	and the rightmost term is meromorphic on $\bC$.
	Thus, the arguments above related to the analyticity still work in this situation.
	Since the all the results in this appendix shown for $\tau_{0} \wedge \tau_{\ell}$ still holds for $\tau_{0}$ with an obvious modification, 
	we present here only the results regarding the existence of QSDs and omit the rest.
	\begin{Thm}
		Let 
		\begin{align}
			\iota_{0} := \sup \{ \lambda \geq 0 \mid \bE_{x}[\mathrm{e}^{\lambda \tau_{0}}] < \infty \quad \text{for some (or every) } x \in I_{>0} \}. \label{}
		\end{align}
		There exists a unique QSD
		\begin{align}
			\mu_{\iota_{0}}(dy) := \iota_{0} W^{(-\iota_{0})}(0,y)m(dy) \quad (y \in I_{>0}) \label{}
		\end{align} 
		such that $\bP_{\mu_{\iota_{0}}}[\tau_{0} > t] = \mathrm{e}^{-\iota_{0}t} \ (t \geq 0)$. 
		It also holds $W^{(-\iota_{0})}(0,x) > 0$ for every $x \in I_{>0}$.
	\end{Thm}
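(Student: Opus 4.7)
The plan is to imitate the proof of Theorem \ref{thm:qsdExistenceTwo}, substituting the role of $q\mapsto W^{(q)}(\cdot,\ell)$ by $q\mapsto Z^{(q)}(\cdot,\ell)$ via the identity $g^{(q)}(x)=Z^{(q)}(x,\ell)/Z^{(q)}(0,\ell)$ recorded in the preamble. This identity I would derive by integrating \eqref{a001} (with $x=0$) against the reference measure $m$ and using the formula $Z^{(q)}(y,\ell)=1+q\int_{(0,\ell)}W^{(q)}(y,u)m(du)$ from Remark \ref{rem:Zexpansion}; the meromorphic extension to $q\in\bC$ then follows from the entireness of $Z^{(q)}$.

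Next I would prove $\iota_0\in(0,\infty)$ and the characterization
\begin{align*}
\iota_0=\min\{\lambda\geq 0: Z^{(-\lambda)}(0,\ell)=0\}.
\end{align*}
Positivity of $\iota_0$ is immediate from $Z^{(0)}(0,\ell)=1$ and the continuity of $q\mapsto Z^{(q)}(0,\ell)$. Finiteness comes from a pigeonhole-type lower bound $\bP_x[\tau_0>t]\geq c\mathrm{e}^{-\gamma t}$ of the form used in Proposition \ref{prop:finiteLambda0}, which is available here because $\ell\in I$ allows excursions from a fixed interior point to return arbitrarily often. The identification as the smallest zero is then the analogue of Lemma \ref{lem:kappa0Characterization}: $-\iota_0$ is the pole of smallest absolute value of $q\mapsto g^{(q)}(x)$, and one must check that the numerator $Z^{(q)}(x,\ell)$ does not simultaneously vanish there.

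Uniqueness of a QSD with exponential killing rate $\lambda$ then follows by the verbatim copy of Lemma \ref{lem:qsdCharacterization-c}, using \eqref{a001} with $x=0$ as the potential density of $X$ killed at $\tau_0$: integrating against $\nu$ produces a Volterra equation of the form \eqref{IntegralEq}, whose unique solution (Corollary \ref{Cor404}) is $\lambda W^{(-\lambda)}(0,\cdot)$. For existence, I would verify that $\nu_{\iota_0}=\iota_0 W^{(-\iota_0)}(0,\cdot)m$ has total mass $1-Z^{(-\iota_0)}(0,\ell)=1$ and then check the stationarity via the potential density, as in Lemma \ref{lem:scaleFuncGivesQSD-entrance-c}.

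The hard part will be establishing the strict positivity $W^{(-\iota_0)}(0,x)>0$ for every $x\in I_{>0}$, in particular at $x=\ell$, which amounts to the strict inequality $\iota_0<\kappa_0$. My approach is by contradiction: if $\iota_0=\kappa_0$, then $Z^{(-\kappa_0)}(0,\ell)=0$ and $W^{(-\kappa_0)}(0,\ell)=0$ hold simultaneously; the $\kappa_0$-invariance of $W^{(-\kappa_0)}(\cdot,\ell)$ provided by Lemma \ref{lem:twoSideExit}(i), together with an analogous $\iota_0$-invariance of $Z^{(-\iota_0)}(\cdot,\ell)$ established just as in Proposition \ref{prop:lambdaX}(i), would yield two linearly independent $\kappa_0$-invariant functions that both vanish at $0$, contradicting the $\kappa_0$-positive recurrence of the bilaterally killed process (Corollary \ref{cor:kappaZeroPositivity}). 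Once $\iota_0<\kappa_0$ is in hand, Proposition \ref{prop:positivityOfW}(ii) with $q=-\iota_0>-\kappa_0=-\lambda^{[0,\ell]}_0$ immediately delivers $W^{(-\iota_0)}(0,x)>0$ for every $x\in(0,\ell]$, completing the proof.
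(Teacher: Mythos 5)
Your overall strategy — re-derive $g^{(q)}(x)=Z^{(q)}(x,\ell)/Z^{(q)}(0,\ell)$ from \eqref{a001}, transport Lemma~\ref{lem:kappa0Characterization}, Corollary~\ref{cor:kappaZeroPositivity}, and the proof of Theorem~\ref{thm:qsdExistenceTwo} by the substitution $W^{(q)}(\cdot,\ell)\mapsto Z^{(q)}(\cdot,\ell)$, and then use Lemma~\ref{lem:qsdCharacterization-c} together with Corollary~\ref{Cor404} for uniqueness and the total-mass identity $\iota_0\int W^{(-\iota_0)}(0,u)\,m(du)=1-Z^{(-\iota_0)}(0,\ell)=1$ for existence — is exactly the route the paper gestures at when it says the arguments carry over ``with an obvious modification.'' You also correctly isolate the one point where the modification is not obvious: the theorem claims positivity of $W^{(-\iota_0)}(0,x)$ for \emph{every} $x\in I_{>0}$, not merely $m$-a.e.\ as in Theorem~\ref{thm:qsdExistenceTwo}, and this is equivalent to the strict inequality $\iota_0<\kappa_0=\lambda_0^{[0,\ell]}$, after which Proposition~\ref{prop:positivityOfW}(ii) finishes.

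However, the argument you give for $\iota_0<\kappa_0$ has a genuine gap. You propose that if $\iota_0=\kappa_0$ then $W^{(-\kappa_0)}(\cdot,\ell)$ and $Z^{(-\iota_0)}(\cdot,\ell)$ are two linearly independent $\kappa_0$-invariant functions of the bilaterally killed semigroup, contradicting $\kappa_0$-positive recurrence. But these two functions are invariant for \emph{different} semigroups: $W^{(-\kappa_0)}(\cdot,\ell)$ is $\kappa_0$-invariant for $\tilde P_t(x,dy)=\bP_x[X_t\in dy,\,\tau_0\wedge\tau_\ell>t]$ (Lemma~\ref{lem:twoSideExit}(i)), whereas $Z^{(-\iota_0)}(\cdot,\ell)$ is $\iota_0$-invariant only for the unilaterally killed semigroup $P_t(x,dy)=\bP_x[X_t\in dy,\,\tau_0>t]$. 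Relative to $\tilde P_t$, the function $Z^{(-\iota_0)}(\cdot,\ell)$ is merely $\kappa_0$-\emph{subinvariant}, since $\tilde P_t\le P_t$ gives $\bE_x[Z^{(-\iota_0)}(X_t,\ell),\tau_0\wedge\tau_\ell>t]\le \mathrm{e}^{-\kappa_0 t}Z^{(-\iota_0)}(x,\ell)$, and the irreducibility hypothesis \eqref{eq75} forces this inequality to be strict for some $t>0$ (on the positive-probability event $\tau_\ell\le t<\tau_0$ one has $Z^{(-\iota_0)}(X_t,\ell)>0$). So the object you produce is a strictly $\kappa_0$-subinvariant nonnegative function, not a second invariant one, and uniqueness of the $\kappa_0$-invariant function does not by itself rule it out. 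The correct contradiction is that a $\kappa_0$-recurrent kernel (Corollary~\ref{cor:kappaZeroPositivity}) admits no strictly $\kappa_0$-subinvariant nonnegative function: setting $\psi(x):=Z^{(-\iota_0)}(x,\ell)-\mathrm{e}^{\kappa_0 t_0}\tilde P_{t_0}Z^{(-\iota_0)}(x,\ell)\ge 0$, not $m$-a.e.\ zero, one checks $\int_0^\infty \mathrm{e}^{\kappa_0 s}\tilde P_s\psi\,ds\le t_0\,Z^{(-\iota_0)}(\cdot,\ell)<\infty$, which contradicts \eqref{lambdaRecurrence-two}. With this replacement the step closes and the rest of your argument goes through.

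Aside from this, be a little careful in the invariance analogue of Proposition~\ref{prop:lambdaX}(i): you need $Z^{(-\iota_0)}(x,\ell)>0$ on $(0,\ell)$, which you should justify via $Z^{(-\iota_0)}(\ell-\delta,\ell)=1-\iota_0\int_{(\ell-\delta,\ell)}W^{(-\iota_0)}(\ell-\delta,u)\,m(du)\to 1$ as $\delta\to 0$ (replacing the role that Corollary~\ref{cor:ratioLimitScaleFunc} plays in Lemma~\ref{lem:twoSideExit}), together with the resolvent propagation argument from \eqref{eq64}. The remainder of your plan — deriving the representation formula by integration against $m$, the Volterra-equation uniqueness, and the identification $\iota_0=\min\{\lambda:Z^{(-\lambda)}(0,\ell)=0\}$ — is sound.
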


\section{The proofs of some propositions} \label{AppA01}
In this section, we put the proofs of some propositions used in the main part of this paper. 

\subsection{The proof of Proposition \ref{Prop101}}
We fix $x, y \in I$ with $x<y$. 
\par
By the definition of $m$, it is enough to prove that $R^{(0)}1_{(x, y)} (y) >0$. 
It holds 
\begin{align}
R^{(0)}1_{(x, y)} (y)=\bE_y \sbra{\int_0^\infty 1_{(x, y)}  (X_t) dt}\geq \bE_y \sbra{\int_{\tau^{\leq x}_y}^{\tau^-_x} 1_{(x, y)}  (X_t) dt}, \label{A01}
\end{align}
where $\tau^{\leq x}_y:= \sup \{ t \geq 0 \mid X_{t} = y, \ X_{s} < y \text{ for } s \in (t,\tau_{x}^{-}]\}$.
\par 
Fix $\omega \in \Omega$ such that $t \mapsto X_{t}(\omega)$ has a c\'adl\'ag path, does not have negative jumps and satisfies $\tau^{-}_{x}(\omega) <\infty$. 
Then, $\tau^{\leq x}_y(\omega)\leq \tau^-_x(\omega)<\infty$.
By the right-continuity of $t \mapsto X_t(\omega)$ and since $X_{\tau^{\leq x}_y}(\omega)=y $, 
there exists $\varepsilon(\omega)>0$ such that $X_{s}(\omega) > x$ for $s \in [\tau^{\leq x}_y, \tau^{\leq x}_y+ \varepsilon(\omega)) $. 
This fact implies that
\begin{align}
\int_{\tau^{\leq x}_y(\omega)}^{\tau^-_x(\omega)} 1_{(x, y)}  (X_t(\omega)) dt\geq  \varepsilon (\omega)
\end{align}
The set consisting of all such $\omega$ has strictly positive probability for $\bP_y$ by (A2), 
the last term of \eqref{A01} is strictly positive.

\subsection{The proof of Proposition \ref{Prop202}}
	Let $q > 0$.
	Recall that from the definition of the excursion measure, it holds $n_{y}[1 - \mathrm{e}^{-q \zeta}] < \infty$ for the lifetime $\zeta$ of the excursion.
	It holds from the Markov property
	\begin{align}
%	\begin{aligned}
		\infty > n_{y}[1 - \mathrm{e}^{-q\zeta}] \geq &n_{y}[\mathrm{e}^{-q\tau_{x}^{-}} - \mathrm{e}^{-q\zeta}, \tau_{x}^{-} < \zeta] 
		\\
		 =&n_{y}[\mathrm{e}^{-q\tau_{x}^{-}}, \tau_{x}^{-} < \infty ] \bE_{x}[1 - \mathrm{e}^{-q \tau_y}]. 
		 %\end{aligned}
		 \label{205}
	\end{align}
	The last term of \eqref{205} is positive by \cite[Lemma 3.5]{NobaGeneralizedScaleFunc} and (A2). 
	Therefore, by the definition of $W^{(q)}$, it holds $W^{(q)}(x,y) \in (0, \infty)$. 
	
	% If $W^{(q)}(x,y) = \infty$,
	% from the absence of negative jumps it holds for any $u \in [x,y)$
	% \begin{align}
	% 	0= n_{y}[\mathrm{e}^{-q\tau_{x}^{-}}, \tau_{x}^{-} < \infty] = n_{y}[\mathrm{e}^{-q\tau_{u}^{-}}, \tau_{u}^{-} < \infty] \bE_{u}[\mathrm{e}^{-q\tau_{x}}, \tau_{x} < \tau_{y}^{+}]. \label{}
	% \end{align}
	% Thus, we see $n_{y}[\mathrm{e}^{-q\tau_{u}^{-}}, \tau_{u}^{-} < \infty] = 0$.
	% From the monotone convergence theorem we have
	% \begin{align}
	% 	n_{y}[\mathrm{e}^{-q\tau_{{I_{<y}} }}, \tau_{{I_{<y}}} < \infty] = \lim_{u \to y-}n_{y}[\mathrm{e}^{-q\tau_{u}^{-}}, \tau_{u}^{-} < \infty] = 0, \label{}
	% \end{align}
	% which contradicts to Remark \ref{rem:downwardRegularity}.
	% Thus, we have $W^{(q)}(x,y) < \infty$ for $q > 0$.
	% Since it holds
	% \begin{align}
	% 	\infty > n_{y}[\mathrm{e}^{-q\tau_{x}}, \tau_{x} < \infty] \geq \mathrm{e}^{-q}n_{y}[\tau_{x} < 1] \label{}
	% \end{align}
	% and $n_{y}[1 \leq \tau_{x} <\infty] \leq n_{y}[\zeta \geq 1] < \infty$,
	% we see the case $q = 0$.

\subsection{The proof of Proposition \ref{prop:continuityOfW}}
	The left continuity is obtained by the following: for $x \in I \cap (- \infty , y)$, 
	\begin{align}
	\lim_{\varepsilon \to 0+}W^{(q)}(x- \varepsilon, y)=&\lim_{\varepsilon \to 0+} \frac{1}{n_y \sbra{e^{-q\tau^-_{x-\varepsilon}}, \tau^-_{x-\varepsilon}<\infty }}\\
	=&\lim_{\varepsilon \to 0+} \frac{1}{n_y \sbra{e^{-q\tau^-_{x}}, \tau^-_{x}<\infty }\bE_x \sbra{e^{-q\tau^-_{x -\varepsilon}},\tau_{x -\varepsilon} <\tau^+_y}}\\
	=&\frac{1}{n_y \sbra{e^{-q\tau^-_{x}}, \tau_{x}<\infty }}, 
	\end{align}
	where we used the strong Markov property at $\tau_x$ in the second equality and Remark \ref{rem:downwardRegularity} in the last equality.
	%By \eqref{scaleFunc} and the dominated convergence theorem, it is sufficient to check $\lim_{\varepsilon\to0}\tau_{x+\varepsilon}=\tau_x$, $n_y$-a.e., for $x \in I \cap (- \infty , y)$. 
	%By \cite[Remark 3.2]{NobaGeneralizedScaleFunc}, it holds $\lim_{\varepsilon\to0+}\tau_{x+\varepsilon}=\tau_x$, $n_y$-a.e.. 
	The inequality $\tau_{x+}:=\lim_{\varepsilon\to0+}\tau_{x+\varepsilon}^{-} \leq {\tau^-_x}$ holds for $n_y$-a.e. $x$, since the map $z \mapsto \tau_z$ is non-increasing. 
	Thus, we have
	\begin{align}
		W^{(q)}(x,y)=&\frac{1}{n_y \sbra{e^{-q{\tau^-_{x}}},{\tau^-_{x}}<\infty }} \label{} \\
		=&\frac{1}{n_y \sbra{e^{-q\tau_{x+}}\bE_{X_{\tau_{x+}}}\sbra{e^{-q{\tau^-_x}}, {\tau^-_x}<\tau^+_y } ,\tau_{x+}<\infty }}
		\\
		=&\frac{1}{n_y \sbra{e^{-q\tau_{x+}}\bE_{x}\sbra{e^{-q{\tau^-_x}}, {\tau^-_x}<\tau^+_y } ,\tau_{x+}<\infty }} \label{eq01-t} \\
		=&\frac{1}{n_y \sbra{e^{-q\tau_{x+}},\tau_{x+}<\infty }}
		\\
		=&\lim_{\varepsilon \to 0+} \frac{1}{n_y \sbra{e^{-q\tau^-_{x+\varepsilon}}, \tau^-_{x+\varepsilon}<\infty }} \label{} \\
		=&\lim_{\varepsilon \to 0+}W^{(q)}(x+ \varepsilon, y), 
	\end{align}
	where we used the quasi-left continuity of $X$ in the third equality and we used Remark \ref{rem:downwardRegularity} in the forth equality.

\end{appendix}

\bibliographystyle{plain}
\bibliography{SNQSD_2-arXiv.bbl}

\end{document}